\documentclass{article}
\usepackage{amscd,amsmath,amssymb,amsthm,enumerate,epsfig,float,graphicx,hyperref,makecell,mathrsfs,rotating,tikz,wasysym}

\linespread{1.2}
\setlength{\topmargin}{-0.5cm}
\setlength{\textheight}{23.5cm}
\setlength{\oddsidemargin}{0cm}
\setlength{\evensidemargin}{0cm}
\setlength{\textwidth}{14.4cm}
\setlength{\headsep}{0in}
\setlength{\parskip}{.15in}
\setlength{\parindent}{0in}

\newtheorem{theorem}{Theorem}[section]
\newtheorem{proposition}[theorem]{Proposition}

\newtheorem{remark}[theorem]{Remark}
\theoremstyle{definition}
\newtheorem{definition}[theorem]{Definition}

\makeatletter
\newtheorem*{rep@theorem}{\rep@title}
\newcommand{\newreptheorem}[2]{%
\newenvironment{rep#1}[1]{%
 \def\rep@title{#2 \ref{##1}${}'$}%
 \begin{rep@theorem}}%
 {\end{rep@theorem}}}
\makeatother

\newtheorem{lemma}[theorem]{Lemma}
\newreptheorem{lemma}{Lemma}
\numberwithin{equation}{subsection}

\title{On the Reducibility of Scalar Generalized Verma Modules of Abelian Type}
\author{Haian HE}
\date{}

\begin{document}

\maketitle

\begin{abstract}
A parabolic subalgebra $\mathfrak{p}$ of a complex semisimple Lie algebra $\mathfrak{g}$ is called a parabolic subalgebra of abelian type if its nilpotent radical is abelian. In this paper, we provide a complete characterization of the parameters for scalar generalized Verma modules attached to parabolic subalgebras of abelian type such that the modules are reducible. The proofs use Jantzen's simplicity criterion, as well as the Enright-Howe-Wallach classification of unitary highest weight modules.
\end{abstract}

\section{Introduction}

Let $\mathfrak{g}$ be a complex semisimple Lie algebra, and fix a Cartan subalgebra $\mathfrak{h}$. Denote by $\Phi$ (respectively, $\Phi^+$) the set of roots (respectively, positive roots) of $(\mathfrak{g},\mathfrak{h})$. Let $\mathfrak{p}$ be a maximal parabolic subalgebra of $\mathfrak{g}$ with standard Levi decomposition $\mathfrak{p}=\mathfrak{l}+\mathfrak{u}_+$ with respect to $(\mathfrak{h},\Phi^+)$, where $\mathfrak{l}$ is a Levi factor and $\mathfrak{u}_+$ is the nilpotent radical. Let $\Phi_\mathfrak{l}$ be the set of roots of $(\mathfrak{l},\mathfrak{h})$, and put $\Phi_\mathfrak{l}^+=\Phi_\mathfrak{l}\cap\Phi^+$. If $\lambda\in\mathfrak{h}^*$ is $\Phi_\mathfrak{l}^+$-dominant integral, let $F_\lambda$ be the finite-dimensional complex simple $\mathfrak{l}$-module with highest weight $\lambda$. By letting the nilpotent radical $\mathfrak{u}_+$ act by 0, $F_\lambda$ is also a module of the parabolic subalgebra $\mathfrak{p}$. Now the generalized Verma module of $\mathfrak{g}$ attached to $\mathfrak{p}$ with the parameter $\lambda$ is defined to be\[M_\mathfrak{p}^\mathfrak{g}(\lambda):=U(\mathfrak{g})\otimes_{U(\mathfrak{p})}F_\lambda\]where $U(\mathfrak{g})$ (respectively, $U(\mathfrak{p})$) is the universal enveloping algebra of $\mathfrak{g}$ (respectively, $\mathfrak{p}$). When $\dim_\mathbb{C}F_\lambda=1$, $M_\mathfrak{p}^\mathfrak{g}(\lambda)$ is called a scalar generalized Verma module; when $\mathfrak{p}$ is a Borel subalgebra, it is just a Verma module. As is known, generalized Verma modules form a fundamental and distinguished class of objects in the parabolic BGG category $\mathcal{O}^\mathfrak{p}$. A universal property is that each highest weight module in $\mathcal{O}^\mathfrak{p}$ can be covered by a generalized Verma module $M_\mathfrak{p}^\mathfrak{g}(\lambda)$ for some $\lambda$. More details about generalized Verma modules can be found in [\textbf{H}].

The reducibility of generalized Verma modules is an interesting problem, which is much more complicated than the problem for Verma modules. The study of this problem has a long history. In 1977, a simplicity criterion for generalized Verma modules was shown by J. C. Jantzen in [\textbf{J}], and it remains one of the most well-known and widely used results along these lines. After that, T. Enright, R. Howe, and N. Wallach worked out the parameters of reducible generalized Verma modules related to unitary highest weight modules in 1983 [\textbf{EHW}]. Recently, A. Kamita used $b$-functions to describe the reducibility of generalized Verma modules in [\textbf{Ka}]. Apart from directly studying the reducibility problem, many mathematicians studied homomorphisms between generalized Verma modules, which gave some results on reducibility of generalized Verma modules indirectly, e.g., [\textbf{B}], [\textbf{F}], [\textbf{G}], and [\textbf{M}].

A parabolic subalgebra $\mathfrak{p}$ of a complex semisimple Lie algebra $\mathfrak{g}$ is called a parabolic subalgebra of abelian type if its nilpotent radical is abelian. We now explain how to characterize the parabolic subalgebras of abelian type in simple Lie algebras in terms of Hermitian symmetric pairs. Suppose that $G$ is a connected real simple Lie group with center $Z$, and let $K$ be a closed maximal subgroup of $G$ with $K/Z$ compact. Let $\mathfrak{g}$ be the complexified Lie algebra of $G$. A unitary representation $(\pi,V)$ of $G$ such that the underlying $(\mathfrak{g},K)$-module is a simple quotient of a Verma module of $\mathfrak{g}$ is called a unitary highest weight module. Harish-Chandra showed that $G$ admits non-trivial unitary highest weight modules precisely when $(G,K)$ is a Hermitian symmetric pair ([\textbf{HC1}] and [\textbf{HC2}]). Now denote by $\mathfrak{l}$ the complexified Lie algebra of $K$. Fix a Cartan subalgebra $\mathfrak{h}$ of $\mathfrak{l}$; then our assumptions on $G$ imply that $\mathfrak{h}$ is also a Cartan subalgebra of $\mathfrak{g}$. Since $(\mathfrak{g},\mathfrak{l})$ is a Hermitian symmetric pair, we may choose a simple root system $\Delta$ of $(\mathfrak{g},\mathfrak{h})$ such that the standard Borel subalgebra $\mathfrak{b}$ satisfying that $\mathfrak{p}:=\mathfrak{l}+\mathfrak{b}$ is a parabolic subalgebra of $\mathfrak{g}$. According to the classification of the Hermitian symmetric pairs [\textbf{W}], the nilpotent radical of $\mathfrak{p}$ must be abelian and hence $\mathfrak{p}$ is a parabolic subalgebra of abelian type. Conversely, suppose that $\mathfrak{g}$ is a complex simple Lie algebra and $\mathfrak{p}$ is a parabolic subalgebra of abelian type of $\mathfrak{g}$, and then $\mathfrak{p}$ is automatically a maximal parabolic subalgebra. It follows from [\textbf{RRS}] that there exists a real form $\mathfrak{g}_\mathbb{R}$ of $\mathfrak{g}$ such that $G_\mathbb{R}/(G_\mathbb{R}\cap P)$ is a Hermitian symmetric space, where $G_\mathbb{R}$ and $P$ are the subgroups of the adjoint group $G=\mathrm{Int}(\mathfrak{g})$ with Lie algebras $\mathfrak{g}_\mathbb{R}$ and $\mathfrak{p}$. The group $K:=G_\mathbb{R}\cap P$ is a maximal compact subgroup of $G_\mathbb{R}$, and its complexified Lie algebra gives the Levi factor, denoted by $\mathfrak{l}$, of $\mathfrak{p}$.

If $\mathfrak{p}$ is a parabolic subalgebra of abelian type, we call a generalized Verma module $M_\mathfrak{p}^\mathfrak{g}(\lambda)$ attached to $\mathfrak{p}$ a generalized Verma module of abelian type. In [\textbf{EHW}], T. Enright, R. Howe, and N. Wallach worked out all the parameters $\lambda$ such that the generalized Verma module $M_\mathfrak{p}^\mathfrak{g}(\lambda)$ of abelian type is reducible and its unique simple quotient $L(\lambda)$ is unitarizable. However, if $L(\lambda)$ is not unitarizable, it is not known for which $\lambda$ $M_\mathfrak{p}^\mathfrak{g}(\lambda)$ is simple. We shall answer this question when $M_\mathfrak{p}^\mathfrak{g}(\lambda)$ is a scalar generalized Verma module, i.e., $F_\lambda=\mathbb{C}_\lambda$ is a one-dimensional complex $\mathfrak{l}$-module, which is equivalent to saying that $(\lambda,\alpha)=0$ for all $\alpha\in\Phi_\mathfrak{l}$, where $(-,-)$ is the inner product on $\mathfrak{h}^*$ induced by the Killing form of $\mathfrak{g}$. We shall recall the techniques in [\textbf{EHW}] in Section 2.2.

Let $\gamma$ denote the unique maximal root in $\Phi^+$. Denote by $\zeta$ the unique element in $\mathfrak{h}^*$ such that $(\zeta,\alpha)=0$ for all $\alpha\in\Phi_\mathfrak{l}$ and $\frac{2(\zeta,\gamma)}{(\gamma,\gamma)}=1$. It is obvious that the parameters $\lambda$ of scalar generalized Verma modules must satisfy $\lambda=c\zeta$ for some $c\in\mathbb{C}$.

Write $\Phi_\mathfrak{u}:=\Phi\setminus\Phi_\mathfrak{l}$, and denote by $\Delta$ the simple root system of $\Phi^+$. Then each standard maximal parabolic subalgebra with respect to $(\mathfrak{h},\Delta)$ is determined by the unique simple root in $\Delta_\mathfrak{u}:=\Phi_\mathfrak{u}\cap\Delta$. Now we can state our main result.
\begin{theorem}
Let $\mathfrak{g}$ be a complex simple Lie algebra, and let $\mathfrak{p}$ be a parabolic subalgebra of abelian type of $\mathfrak{g}$. Fix a Cartan subalgebra $\mathfrak{h}\subseteq\mathfrak{p}$, and choose a simple root system $\Delta$ for $(\mathfrak{g},\mathfrak{h})$ such that $\mathfrak{p}$ contains the standard Borel subalgebra with respect to $(\mathfrak{h},\Delta)$. Then the parameters $\lambda\in\mathfrak{h}^*$ such that $M_\mathfrak{p}^\mathfrak{g}(\lambda)$ is a reducible scalar generalized Verma module are precisely given case by case, according to the Hermitian symmetric pairs of compact type, as follows.
\begin{enumerate}[(i)]
\item $(SU(p+q),S(U(p)\times U(q)))$ for $p,q\geq1$: $\Delta=\{e_i-e_{i+1}\mid1\leq i\leq p+q-1\}$; $\Delta_\mathfrak{u}=\{e_p-e_{p+1}\}$; $\zeta=\displaystyle{\frac{q}{p+q}\sum_{i=1}^pe_i-\frac{p}{p+q}\sum_{j=p+1}^{p+q}e_j}$; $\lambda=c\zeta$ with $c\in1-\min\{p,q\}+\mathbb{Z}_{\geq0}$.
\item $(Sp(n),U(n))$ for $n\geq2$: $\Delta=\{e_i-e_{i+1}\mid1\leq i\leq n-1\}\cup\{2e_n\}$; $\Delta_\mathfrak{u}=\{2e_n\}$; $\zeta=\displaystyle{\sum_{i=1}^ne_i}$; $\lambda=c\zeta$ with $c\in\frac{1-n}{2}+\frac{1}{2}\mathbb{Z}_{\geq0}$.
\item $(SO(2n+1),SO(2)\times SO(2n-1))$ for $n\geq2$: $\Delta=\{e_i-e_{i+1}\mid1\leq i\leq n-1\}\cup\{e_n\}$; $\Delta_\mathfrak{u}=\{e_1-e_2\}$; $\zeta=e_1$; $\lambda=c\zeta$ with $c\in\mathbb{Z}_{\geq0}\cup(\frac{3}{2}-n+\mathbb{Z}_{\geq0})$.
\item $(SO(2n),SO(2)\times SO(2n-2))$ for $n\geq2$: $\Delta=\{e_i-e_{i+1}\mid1\leq i\leq n-1\}\cup\{e_{n-1}+e_n\}$; $\Delta_\mathfrak{u}=\{e_1-e_2\}$; $\zeta=e_1$; $\lambda=c\zeta$ with $c\in2-n+\mathbb{Z}_{\geq0}$.
\item $(SO(2n),U(n))$ for $n\geq2$: $\Delta=\{e_i-e_{i+1}\mid1\leq i\leq n-1\}\cup\{e_{n-1}+e_n\}$; $\Delta_\mathfrak{u}=\{e_{n-1}+e_n\}$; $\zeta=\displaystyle{\frac{1}{2}\sum_{i=1}^ne_i}$; $\lambda=c\zeta$ with $c\in2[\frac{3-n}{2}]+\mathbb{Z}_{\geq0}$ where $[x]$ denotes the largest integer not greater than $x\in\mathbb{R}$.
\item $(E_{6(-78)},SO(2)\times SO(10))$: $\Delta=\{\alpha_i\mid1\leq i\leq6\}$ with $\alpha_1=\frac{1}{2}(e_1-e_2-e_3-e_4-e_5-e_6-e_7+e_8)$, $\alpha_2=e_1+e_2$, $\alpha_i=e_{i-1}-e_{i-2}$ ($3\leq i\leq6$); $\Delta_\mathfrak{u}=\{\alpha_1\}$; $\zeta=\frac{2}{3}(-e_6-e_7+e_8)$; $\lambda=c\zeta$ with $c\in-3+\mathbb{Z}_{\geq0}$.
\item $(E_{7(-78)},SO(2)\times E_{6(-78)})$: $\Delta=\{\alpha_i\mid1\leq i\leq7\}$ with $\alpha_i$ ($1\leq i\leq6$) same as in (vi) and $\alpha_7=e_6-e_5$; $\Delta_\mathfrak{u}=\{\alpha_7\}$; $\zeta=e_6-\frac{1}{2}e_7+\frac{1}{2}e_8$; $\lambda=c\zeta$ with $c\in-8+\mathbb{Z}_{\geq0}$.
\end{enumerate}
\end{theorem}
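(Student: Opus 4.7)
The plan is to combine Jantzen's simplicity criterion with the Enright--Howe--Wallach classification in a case-by-case analysis over the seven Hermitian symmetric pairs of compact type. For each pair I would fix the coordinates and simple roots already listed in the statement, compute $\rho$ and the set $\Phi_\mathfrak{u}^+$ of positive non-compact roots explicitly, and translate Jantzen's criterion into a finite list of linear integrality conditions on the single scalar parameter $c$; the resulting set of reducibility points should then be visibly a finite union of arithmetic progressions of the type asserted.

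The key reduction is that when $\mathfrak{p}$ is of abelian type and $\lambda=c\zeta$ is scalar, Jantzen's criterion for $M_\mathfrak{p}^\mathfrak{g}(c\zeta)$ boils down to asking, for each $\alpha\in\Phi_\mathfrak{u}^+$, whether $\langle c\zeta+\rho,\alpha^\vee\rangle\in\mathbb{Z}_{>0}$, subject to the $\Phi_\mathfrak{l}^+$-dominance of $s_\alpha\cdot(c\zeta)$ that is needed to yield a nonzero homomorphism of generalized Verma modules. Since $\langle\zeta,\alpha^\vee\rangle$ is controlled by the expansion of $\alpha$ along the strongly orthogonal Harish-Chandra roots $\gamma_1,\dots,\gamma_r$ associated to the Hermitian symmetric pair, and $\langle\rho,\alpha^\vee\rangle$ takes only finitely many values as $\alpha$ ranges over $\Phi_\mathfrak{u}^+$, these conditions collapse to a small number of linear equations in $c$ whose solution set is a union of shifted copies of $\mathbb{Z}_{\geq 0}$. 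The role of the EHW classification is to pin down the smallest (i.e.\ the ``first reduction'') value of $c$ in each progression, since this value is precisely the endpoint of the continuous part of the unitary set for the pair in question, and thus to cross-check that no spurious reducibility points have been introduced or missed.

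In execution I would (a) compute $\rho$ and the pairings $\langle\zeta,\alpha^\vee\rangle$ and $\langle\rho,\alpha^\vee\rangle$ for each $\alpha\in\Phi_\mathfrak{u}^+$; (b) identify the distinct arithmetic progressions produced by the integrality conditions; and (c) match the smallest elements of these progressions with the EHW first reduction points. In the classical cases (i)--(v) the non-compact roots admit a clean indexing in the $\{e_i\}$ basis and the computation is routine; the real subtleties are in case (iii), where short and long non-compact roots of $B_n$ produce the two separate progressions $\mathbb{Z}_{\geq 0}$ and $\tfrac{3}{2}-n+\mathbb{Z}_{\geq 0}$, and in case (v), where the half-integrality of $\zeta$ combined with the parity of $n$ is exactly what forces the floor expression $2[(3-n)/2]$. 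I expect the main obstacle to lie in the exceptional cases (vi) and (vii): enumerating the sixteen (respectively twenty-seven) positive non-compact roots of $E_6$ and $E_7$ in the given coordinate system and verifying that exactly one arithmetic progression of reducibility points survives is tedious, but once $\rho$ and the pairings have been written down it is a bookkeeping exercise rather than a conceptual difficulty.
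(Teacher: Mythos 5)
There is a genuine gap, and it sits precisely where you wave it away as bookkeeping. Jantzen's criterion for a scalar generalized Verma module attached to a maximal parabolic is \emph{not} the per-root condition ``$\langle c\zeta+\rho,\alpha^\vee\rangle\in\mathbb{Z}_{>0}$ for some $\alpha\in\Phi_\mathfrak{u}^+$''; it is the vanishing or non-vanishing of the signed sum $\sum_{\beta\in S_\lambda}Y(s_\beta(\lambda+\rho))$, and deciding this requires knowing which $s_\beta(\lambda+\rho)$ are $\Phi_\mathfrak{l}$-singular (those terms drop out) and whether the surviving terms cancel in pairs under odd-length elements of $W_\mathfrak{l}$. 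Your proposed reduction therefore produces the wrong answer already in case (iii): the integrality conditions there give $(3-2n+\mathbb{Z}_{\geq0})\cup(\tfrac32-n+\mathbb{Z}_{\geq0})$, not the asserted $\mathbb{Z}_{\geq0}\cup(\tfrac32-n+\mathbb{Z}_{\geq0})$; the negative integer parameters in between satisfy your integrality test yet give \emph{simple} modules, because the Jantzen sum reduces to exactly two nonzero terms $Y(s_{e_1+e_{2n-z-1}}(\lambda+\rho))+Y(s_{e_1}(\lambda+\rho))$ which cancel via an odd-length reflection in $W_\mathfrak{l}$ (the paper's Proposition 3.6). So your claim that the $B_n$ short and long roots ``produce the two progressions $\mathbb{Z}_{\geq0}$ and $\tfrac32-n+\mathbb{Z}_{\geq0}$'' is false as an integrality statement; the progression $\mathbb{Z}_{\geq0}$ only emerges after a cancellation argument plus the EHW bound. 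Your hedge about requiring $\Phi_\mathfrak{l}^+$-dominance of $s_\alpha\cdot(c\zeta)$ to get a nonzero homomorphism replaces Jantzen's criterion by an unproved heuristic: existence of such a map is sufficient for reducibility but you would need a converse to conclude simplicity at the remaining points, and none is offered.

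The same omission undermines the other cases and misstates the role of EHW. The EHW data are not merely a cross-check on first reduction points: Theorem 2.6(1) is what proves simplicity for $z<A(\lambda_0)$ at parameters where $\lambda+\rho$ is singular (so the converse direction of the Humphreys/Jantzen integrality theorem is unavailable), and Theorem 2.6(2) proves reducibility at the lattice points $A(\lambda_0)+iC(\lambda_0)\leq B(\lambda_0)$, again at singular parameters. What remains are the parameters strictly between $A(\lambda_0)$ and $B(\lambda_0)$ off the EHW lattice, and these are exactly the points where one must run the full Jantzen sum analysis: they turn out simple in case (iii) but reducible in cases (iv)--(vii), and proving non-cancellation there is not routine. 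For $E_6$ it requires tabulating $s_\beta(\lambda+\rho)$ for all $\beta\in S_\lambda$ at $z=9,10$ and using parity/zero-count invariants of the $W_\mathfrak{l}$-action; for $E_7$ the Weyl group of type $E_6$ is too unwieldy for direct inspection, and the paper has to introduce an additional invariant (the pairing with the fundamental weight $\theta_u$ of the noncompact simple root, Lemma 3.19) to rule out cancellations, plus separate singularity analyses at $z=10,11$. Your proposal contains no mechanism for any of this, so as written it would both introduce spurious reducibility points (cases (i)--(iii)) and leave the genuinely singular intermediate points in (iv)--(vii) unproved.
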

\begin{remark}
An observation from Theorem 1.1 is that in each case, the set of parameters for which the corresponding scalar generalized Verma modules of abelian type are reducible constitutes a finite set and a semi-infinite arithmetic progression.
\end{remark}
\begin{remark}
Although Theorem 1.1 only discusses complex simple Lie algebras, people may deduce the results for complex semisimple Lie algebras immediately. In fact, suppose that $\mathfrak{g}=\displaystyle{\bigoplus_{j}\mathfrak{g}_j}$ denotes the decomposition of a finite-dimensional complex semisimple Lie algebra into its simple ideals, and $\mathfrak{p}$ is a parabolic subalgebra of $\mathfrak{g}$. Then
\begin{itemize}
\item $\mathfrak{p}=\displaystyle{\bigoplus_{j}\mathfrak{p}_j}$, with each $\mathfrak{p}_j$ a parabolic subalgebra of $\mathfrak{g}_j$.
\item $\mathfrak{p}$ has abelian (respectively, nilpotent) radical if and only if each $\mathfrak{p}_j$ has abelian (respectively, nilpotent) radical.
\item $\lambda=\displaystyle{\sum_{j}\lambda_j}$ is a dominant integral highest weight with respect to $\mathfrak{p}$ if and only if each $\lambda_j$ is dominant integral highest weight with respect to $\mathfrak{p}_j$.
\item For the above data, $M_\mathfrak{p}^\mathfrak{g}(\lambda)=\displaystyle{\bigotimes_jM_{\mathfrak{p}_j}^{\mathfrak{g}_j}(\lambda_j)}$.
\item $M_\mathfrak{p}^\mathfrak{g}(\lambda)$ is a scalar (simple) generalized Verma module if and only if each $M_{\mathfrak{p}_j}^{\mathfrak{g}_j}(\lambda_j)$ is a scalar (simple) generalized Verma module.
\end{itemize}
From these points, we actually obtain the parameters of all the reducible scalar generalized Verma modules of abelian type for complex semisimple Lie algebras.
\end{remark}
The reducibility problem can be studied for scalar generalized Verma modules attached to arbitrary parabolic algebras. Concretely, let $\mathfrak{p}=\mathfrak{l}+\mathfrak{u}_+$ be a parabolic subalgebra of a semisimple Lie algebra $\mathfrak{g}$, where $\mathfrak{l}$ is a Levi factor and $\mathfrak{u}_+$ is the nilpotent radical. Put $\mathfrak{u}_0=\mathfrak{u}_+$ and $\mathfrak{u}_k=[\mathfrak{u}_+, \mathfrak{u}_{k-1}]$ for positive integer $k$. We call $\mathfrak{u}_k$ the $k$th-step of $\mathfrak{u}_+$ for a nonnegative integer $k$. The nilpotent Lie algebra $\mathfrak{u}_+$ is called $k$-step nilpotent if $\mathfrak{u}_{k-1}\neq0$ and $\mathfrak{u}_k=0$. If the nilpotent radical $\mathfrak{u}_+$ of the parabolic subalgebra $\mathfrak{p}$ is $k$-step nilpotent, then we say that $\mathfrak{p}$ is a parabolic subalgebra of $k$-step nilpotent type. In particular, if $\mathfrak{u}_0=\mathfrak{u}_+=0$, then $\mathfrak{p}=\mathfrak{g}$, and hence generalized Verma modules attached to $\mathfrak{p}$ are just finite-dimensional complex simple modules of $\mathfrak{g}$, whose reducibility problems are well-known [\textbf{Kn}, Theorem 5.5]. If $\mathfrak{p}$ is a parabolic subalgebra of $1$-step nilpotent type, then it is just a parabolic subalgebra of abelian type, the scalar generalized Verma modules attached to which are what this paper handles. As for a parabolic subalgebra $\mathfrak{p}$ of $2$-step nilpotent type, if $\dim_\mathbb{C}\mathfrak{u}_1=1$, then $\mathfrak{p}$ is called a parabolic subalgebra of $2$-step nilpotent Heisenberg type; else, it is called a parabolic subalgebra of $2$-step nilpotent non-Heisenberg type. T. Kubo investigated and solved the reducibility problem for scalar generalized Verma modules associated to exceptional simple Lie algebras and maximal parabolic subalgebras of $2$-step nilpotent non-Heisenberg type in [\textbf{Ku}]. Moreover, according to T. Kubo, generalized Verma modules associated to exceptional simple Lie algebras and maximal parabolic subalgebras of $2$-step nilpotent Heisenberg type were studied by R. Zierau, but the results were not published.

The paper is organized as follows. In Section 2, we recall Janzten's criterion for simplicity of generalized Verma modules attached to maximal parabolic subalgebras, and then recall the techniques in [\textbf{EHW}], both of which offer us powerful tools to deal with our problem. In Section 3, we prove Theorem 1.1 in a case-by-case fashion for all seven cases.

\section{Janzten's Criterion and Special Lines}

\subsection{Janzten's Criterion}

In this section, we recall the irreducibility criterion due to J. C. Jantzen for generalized Verma modules. Because we focus on parabolic subalgebras of abelian type, which are automatically maximal parabolic subalgebras, we only state a specialization of Jantzen's criterion for scalar generalized Verma modules attached to maximal parabolic subalgebras $\mathfrak{p}$.

First of all, we fix some notations for the paper. All Lie algebras and modules considered in this paper are over the complex number field $\mathbb{C}$ unless we make any declaration. Denote by $\mathbb{Z}_{\geq0}$ and $\mathbb{Z}_{>0}$ the set of nonnegative integers and the set of positive integers respectively. Let $\mathfrak{g}$ be a finite-dimensional complex semisimple Lie algebra. Choose a Cartan subalgebra $\mathfrak{h}$ of $\mathfrak{g}$, and a Borel subalgebra $\mathfrak{b}$ containing $\mathfrak{h}$. Denote by $\Phi$, $\Phi^+$, and $\Delta$ the root system, the set of positive roots, and the simple root system with respect to $(\mathfrak{g},\mathfrak{b},\mathfrak{h})$ respectively. Let $\mathfrak{p}$ be a maximal parabolic subalgebra such that $\mathfrak{b}\subseteq\mathfrak{p}\subseteq\mathfrak{g}$, and denote by $\mathfrak{p}=\mathfrak{l}+\mathfrak{u}_+$ the Levi decomposition with respect to $(\mathfrak{h},\Delta)$, where $\mathfrak{l}$ is the Levi factor with $\mathfrak{h}\subseteq\mathfrak{l}$ and $\mathfrak{l}+\mathfrak{b}=\mathfrak{p}$, and $\mathfrak{u}_+$ is the nilpotent radical. Now we may denote by $\Phi_\mathfrak{l}$ the root system for $(\mathfrak{l},\mathfrak{h})$, and set $\Phi_\mathfrak{l}^+:=\Phi_\mathfrak{l}\cap\Phi^+$ and $\Delta_\mathfrak{l}:=\Phi_\mathfrak{l}\cap\Delta$. Also, define $\Phi_\mathfrak{u}^+:=\Phi^+\setminus\Phi_\mathfrak{l}^+$. Let $(-,-)$ be the inner product on $\mathfrak{h}^*$ induced by the Killing form of $\mathfrak{g}$, and for $\mu,\nu\in\mathfrak{h}^*$ define $\langle\mu,\nu\rangle:=\frac{2(\mu,\nu)}{(\nu,\nu)}$. Now the set of $\Phi_\mathfrak{l}^+$-dominant integral weights is defined as $\Lambda_\mathfrak{l}^+:=\{\lambda\in\mathfrak{h}^*\mid\langle\lambda,\alpha\rangle\in\mathbb{Z}_{\geq0}\textrm{ for all }\alpha\in\Phi_\mathfrak{l}^+\}$. Moreover, denote by $\rho$ half the sum of the positive roots in $\Phi^+$. For $\alpha\in\Phi$, define a reflection $s_\alpha: \mathfrak{h}^*\rightarrow\mathfrak{h}^*$ given by $s_\alpha(\lambda)=\lambda-\langle\lambda,\alpha\rangle\alpha$ for $\lambda\in\mathfrak{h}^*$, and then denote by $W$ (respectively, $W_\mathfrak{l}$) the Weyl group of $(\mathfrak{g},\mathfrak{h})$ (respectively, $(\mathfrak{l},\mathfrak{h})$) generated by $s_\alpha$ for $\alpha\in\Delta$ (respectively, for $\alpha\in\Delta_\mathfrak{l}$). Let $U(\mathfrak{g})$ (respectively, $U(\mathfrak{p})$) be the universal enveloping algebra of $\mathfrak{g}$ (respectively, $\mathfrak{p}$).

If $\lambda\in\mathfrak{h}^*$ is $\Phi_\mathfrak{l}^+$-dominant integral, let $F_\lambda$ be the finite-dimensional complex simple $\mathfrak{l}$-module with highest weight $\lambda$. By letting the elements in $\mathfrak{u}_+$ act by 0, $F_\lambda$ is induced to a $\mathfrak{p}$-module. Now the generalized Verma module of $\mathfrak{g}$ attached to $\mathfrak{p}$ with the parameter $\lambda$ is defined to be\[M_\mathfrak{p}^\mathfrak{g}(\lambda):=U(\mathfrak{g})\otimes_{U(\mathfrak{p})}F_\lambda.\]When $\dim_\mathbb{C}F_\lambda=1$, i.e., $(\lambda,\alpha)=0$ for all $\alpha\in\Delta_\mathfrak{l}$, it is called a scalar generalized Verma module.
\begin{theorem}[{\cite[Theorem 9.12]{H}}]
Let $\lambda\in\Lambda_\mathfrak{l}^+$. If $\langle\lambda+\rho,\beta\rangle\notin\mathbb{Z}_{>0}$ for all $\beta\in\Phi_\mathfrak{u}^+$, then $M_\mathfrak{p}^\mathfrak{g}(\lambda)$ is simple. The converse also holds if $\lambda+\rho$ is regular.
\end{theorem}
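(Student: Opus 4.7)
My plan is to reduce the statement to the theory of ordinary Verma modules together with the Jantzen filtration. The starting observation is that $M_\mathfrak{p}^\mathfrak{g}(\lambda)$ is the quotient of the full Verma module $M(\lambda):=M_\mathfrak{b}^\mathfrak{g}(\lambda)$ by the $\mathfrak{g}$-submodule $N(\lambda)$ generated by the singular vectors $f_\alpha^{\langle\lambda,\alpha\rangle+1}v_\lambda$ for $\alpha\in\Delta_\mathfrak{l}$; each such vector generates an embedded copy of $M(s_\alpha\cdot\lambda)$. By the BGG strong linkage principle, every composition factor of $M(\lambda)$, hence of $M_\mathfrak{p}^\mathfrak{g}(\lambda)$, has the form $L(\mu)$ with $\mu=w\cdot\lambda$ for some $w\in W$; for the quotient $M_\mathfrak{p}^\mathfrak{g}(\lambda)$ one must additionally have $\mu\in\Lambda_\mathfrak{l}^+$.

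For the sufficient direction, I would argue by contradiction. Suppose $\langle\lambda+\rho,\beta\rangle\notin\mathbb{Z}_{>0}$ for every $\beta\in\Phi_\mathfrak{u}^+$, yet $M_\mathfrak{p}^\mathfrak{g}(\lambda)$ is reducible. Its unique maximal submodule then contains a highest-weight vector of some weight $\mu\in\Lambda_\mathfrak{l}^+$ with $\mu<\lambda$, and strong linkage yields a root $\beta\in\Phi^+$ with $s_\beta\cdot\lambda\leq\mu$ and $n:=\langle\lambda+\rho,\beta\rangle\in\mathbb{Z}_{>0}$. The restrictions that $\lambda$ is $\Phi_\mathfrak{l}^+$-dominant integral and that $\mu=w\cdot\lambda$ remains $\Phi_\mathfrak{l}^+$-dominant integral should, via an induction on the length of $w$ using the behaviour of $s_\alpha\cdot\mu$ for $\alpha\in\Delta_\mathfrak{l}$, force the witnessing root $\beta$ to lie in $\Phi_\mathfrak{u}^+$, contradicting the standing hypothesis.

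For the converse when $\lambda+\rho$ is regular, I would invoke Jantzen's sum formula for the generalized Verma module. There is a decreasing filtration $M_\mathfrak{p}^\mathfrak{g}(\lambda)=M^0\supseteq M^1\supseteq M^2\supseteq\cdots$ with $M^1$ the unique maximal submodule, whose characters satisfy, in the regular case,
\[
\sum_{i\geq1}\operatorname{ch}M^i=\sum_{\beta\in\Phi_\mathfrak{u}^+,\ \langle\lambda+\rho,\beta\rangle\in\mathbb{Z}_{>0}}\operatorname{ch}M_\mathfrak{p}^\mathfrak{g}(s_\beta\cdot\lambda).
\]
If some $\beta\in\Phi_\mathfrak{u}^+$ contributes a positive term on the right, the right-hand side is nonzero, hence $M^1\neq0$ and $M_\mathfrak{p}^\mathfrak{g}(\lambda)$ is reducible.

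The main technical obstacle is the derivation of the parabolic Jantzen sum formula. This requires constructing a filtration through a one-parameter deformation of the contravariant form on $M_\mathfrak{p}^\mathfrak{g}(\lambda)$, computing the order of vanishing of the determinant of this form, and checking that only roots of $\Phi_\mathfrak{u}^+$ contribute (the contributions from $\Phi_\mathfrak{l}^+$ being absorbed into the finite-dimensional $\mathfrak{l}$-module structure of $F_\lambda$ and cancelling). The regularity of $\lambda+\rho$ is what prevents spurious cancellations among the characters $\operatorname{ch}M_\mathfrak{p}^\mathfrak{g}(s_\beta\cdot\lambda)$ and is precisely what is needed for the converse; without it one only gets the implication asserted in the first sentence of the theorem.
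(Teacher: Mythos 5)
The paper never proves this statement; it is quoted from Jantzen via [\textbf{H}, Theorem 9.12], so your attempt has to be judged against the standard argument, whose shape (linkage for the first sentence, a parabolic Jantzen sum formula for the converse) you have correctly identified. The genuine gap is in the converse. Your displayed sum formula is not correct --- in fact it is not even well formed: for $\beta\in\Phi_\mathfrak{u}^+$ with $\langle\lambda+\rho,\beta\rangle\in\mathbb{Z}_{>0}$ the weight $s_\beta\cdot\lambda$ is in general not $\Phi_\mathfrak{l}^+$-dominant integral, so $M_\mathfrak{p}^\mathfrak{g}(s_\beta\cdot\lambda)$ does not exist (already for $\mathfrak{sl}(3,\mathbb{C})$ with $\Delta_\mathfrak{l}=\{\alpha_1\}$ and $\beta=\alpha_1+\alpha_2$ one finds $\langle s_\beta\cdot\lambda,\alpha_1\rangle=-\langle\lambda+\rho,\beta\rangle<0$). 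The correct right-hand side is the alternating $W_\mathfrak{l}$-Euler expression $\sum_{\beta\in S_\lambda}Y(s_\beta(\lambda+\rho))$ of Section 2.1, whose terms carry signs and can cancel; this is exactly why Theorem 2.3 is a criterion about the vanishing of that sum rather than about $S_\lambda\neq\emptyset$, and the paper's Proposition 3.6 exhibits a simple $M_\mathfrak{p}^\mathfrak{g}(\lambda)$ with $S_\lambda\neq\emptyset$. Hence your step ``a positive term makes the right-hand side nonzero'' does not stand. What regularity of $\lambda+\rho$ actually provides is the exclusion of cancellation: each $s_\beta(\lambda+\rho)$ is then $\Phi_\mathfrak{l}$-regular, so $Y(s_\beta(\lambda+\rho))\neq0$, and if $s_\beta(\lambda+\rho)=\omega s_{\beta'}(\lambda+\rho)$ with $\omega\in W_\mathfrak{l}$, then triviality of the stabilizer of the regular weight $\lambda+\rho$ forces $\omega=s_\beta s_{\beta'}$, of determinant $+1$, hence of even length, so no odd-length identification as in Proposition 2.4 can occur and the sum is nonzero whenever $S_\lambda\neq\emptyset$. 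That argument (the mechanism the paper itself uses through Proposition 2.4), not positivity of characters, is the missing content of your converse.

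In the sufficiency half your reduction is workable, but the decisive step is only asserted, and there is a slip: strong linkage gives $\mu\leq s_{\beta_1}\cdot\lambda\leq\lambda$, not $s_\beta\cdot\lambda\leq\mu$, and the positivity conditions along the chain are taken at the intermediate weights $\mu_{i-1}$, not at $\lambda$, which is precisely the difficulty you wave at with ``should force $\beta\in\Phi_\mathfrak{u}^+$.'' To close it: (a) $W_\mathfrak{l}$ preserves $\Phi_\mathfrak{u}^+$, since $s_\alpha$ for $\alpha\in\Delta_\mathfrak{l}$ does not change the coefficients over $\Delta\setminus\Delta_\mathfrak{l}$; (b) if every chain root lay in $\Phi_\mathfrak{l}^+$, then $\mu+\rho\in W_\mathfrak{l}(\lambda+\rho)$ with both weights strictly $\Phi_\mathfrak{l}^+$-dominant, forcing $\mu=\lambda$; (c) taking the first proper step that uses some $\beta_i\in\Phi_\mathfrak{u}^+$ and writing the product of the earlier reflections as $w\in W_\mathfrak{l}$, one gets $\langle\lambda+\rho,w^{-1}\beta_i\rangle=\langle\mu_{i-1}+\rho,\beta_i\rangle\in\mathbb{Z}_{>0}$ with $w^{-1}\beta_i\in\Phi_\mathfrak{u}^+$, contradicting the hypothesis. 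With (a)--(c) supplied and the converse repaired as above (or simply quoted from [\textbf{J}], as the paper does), the proof would be complete.
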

The following notation is important in the statement of Jantzen's criterion. For $\lambda\in\mathfrak{h}^*$, define
\begin{equation}
Y(\lambda):=D^{-1}\displaystyle{\sum_{\omega\in W_\mathfrak{l}}(-1)^{l(\omega)}e^{\omega\lambda}}
\end{equation}
where $l(\omega)$ denotes the length of $\omega\in W_\mathfrak{l}$, $e^\mu$ is a function on $\mathfrak{h}^*$ which takes values 1 at $\mu$ and 0 elsewhere, and $D=e^\rho\displaystyle{\prod_{\alpha\in\Phi^+}(1-e^{-\alpha})}$ is the Weyl denominator. It is clear that $Y(\lambda)$ is the character formula of $M_\mathfrak{p}^\mathfrak{g}(\lambda-\rho)$ if $\lambda$ is $\Phi_\mathfrak{l}^+$-dominant integral.
\begin{proposition}[{\cite[Corollary A.1.5]{Ku}, \cite[Corollary 2.2.10]{M}}]
We have the following two properties:
\begin{enumerate}[(1)]
\item If $\lambda\in\mathfrak{h}^*$ satisfies $(\lambda,\alpha)=0$ for some $\alpha\in\Phi_\mathfrak{l}$, i.e., $\lambda$ is $\Phi_\mathfrak{l}$-singular, then $Y(\lambda)=0$. Conversely, if $\lambda\in\mathfrak{h}^*$ satisfies $\langle\lambda,\alpha\rangle\in\mathbb{Z}\setminus\{0\}$ for all $\alpha\in\Phi_\mathfrak{l}$, i.e., $\lambda$ is $\Phi_\mathfrak{l}$-regular integral, then $Y(\lambda)\neq0$.
\item For $\lambda\in\mathfrak{h}^*$ and $\omega\in W_\mathfrak{l}$, we have $Y(\omega\lambda)=(-1)^{l(\omega)}Y(\lambda)$.
\end{enumerate}
\end{proposition}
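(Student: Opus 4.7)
The plan is to prove (2) first, because the forward direction of (1) falls out of (2) in one line; only the converse of (1) requires fresh input, namely linear independence of the $e^\mu$. For (2), substituting $\omega\lambda$ into the defining formula and re-indexing the Weyl sum by $\tau:=\sigma\omega$ (a bijection of $W_\mathfrak{l}$), together with the fact that the length induces the sign character of $W_\mathfrak{l}$ and $l(\omega^{-1})=l(\omega)$, lets one pull out a common factor of $(-1)^{l(\omega)}$, yielding the identity.

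For the forward direction of (1), if $(\lambda,\alpha)=0$ for some $\alpha\in\Phi_\mathfrak{l}$, then $s_\alpha\lambda=\lambda$, and since $s_\alpha\in W_\mathfrak{l}$ has odd length (it is conjugate to a simple reflection, so its sign is $-1$), part (2) forces $Y(\lambda)=Y(s_\alpha\lambda)=-Y(\lambda)$ and hence $Y(\lambda)=0$. For the converse, assume $\lambda$ is $\Phi_\mathfrak{l}$-regular integral. By the standard Chevalley-type fact that the $W_\mathfrak{l}$-stabilizer of $\lambda$ is generated by the reflections $s_\alpha$ for $\alpha\in\Phi_\mathfrak{l}$ with $(\lambda,\alpha)=0$, this stabilizer is trivial, so the $|W_\mathfrak{l}|$ weights $\omega\lambda$ are pairwise distinct. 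Since the $e^\mu$ are linearly independent Kronecker-delta functions on $\mathfrak{h}^*$, the numerator $N(\lambda):=\sum_{\omega\in W_\mathfrak{l}}(-1)^{l(\omega)}e^{\omega\lambda}$ is a nontrivial $\pm 1$-combination and therefore nonzero. Because $D$ is a unit in the completed group algebra of the weight lattice (with $D^{-1}=e^{-\rho}\prod_{\alpha\in\Phi^+}(1-e^{-\alpha})^{-1}$ expanded as a geometric series in the negative root directions), we obtain $Y(\lambda)=D^{-1}N(\lambda)\neq 0$.

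The argument is essentially bookkeeping; the only items that require genuine care are the Chevalley stabilizer lemma and the interpretation of $D^{-1}$ as a bona fide element of the completed character algebra. Both are standard background for characters in category $\mathcal{O}$, so I do not foresee any real obstacle.
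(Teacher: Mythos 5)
Your proof is correct. Note that the paper itself does not prove this proposition; it quotes it from [\textbf{Ku}, Corollary A.1.5] and [\textbf{M}, Corollary 2.2.10], and your argument is essentially the standard one found in those sources: part (2) by reindexing the $W_\mathfrak{l}$-sum and using that length induces the sign character, the singular case of (1) from (2) applied to the odd-length reflection $s_\alpha$ fixing $\lambda$, and the regular case from the linear independence of the delta functions $e^{\omega\lambda}$ (the orbit being free by Chevalley's stabilizer lemma) together with the invertibility of $D$ in the completed character algebra. One small observation: your converse argument only uses $\Phi_\mathfrak{l}$-regularity, not integrality, which is harmless since the stated hypothesis is stronger; integrality matters elsewhere in Jantzen's criterion, not here.
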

Denote by $s_\beta$ the reflection corresponding to $\beta\in\Phi$ in $W$. Set
\begin{equation}
S_\lambda:=\{\beta\in\Phi_\mathfrak{u}^+\mid\langle\lambda+\rho,\beta\rangle\in\mathbb{Z}_{>0}\}.
\end{equation}
Then Jantzen's criterion for the scalar generalized Verma modules attached to a maximal parabolic subalgebra $\mathfrak{p}$ is stated as follows.
\begin{theorem}[{\cite[Satz 3]{J}, \cite[Theorem 2.2.11]{M}}]
Let $\mathfrak{p}$ be a maximal parabolic subalgebra. Then the scalar generalized Verma module $M_\mathfrak{p}^\mathfrak{g}(\lambda)$ is irreducible if and only if\[\displaystyle{\sum_{\beta\in S_\lambda}Y(s_\beta(\lambda+\rho))=0}.\]
\end{theorem}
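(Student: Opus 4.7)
The plan is to deduce the criterion from Jantzen's filtration machinery for generalized Verma modules. Jantzen produces a finite descending filtration
\[
M_\mathfrak{p}^\mathfrak{g}(\lambda) = M^0 \supseteq M^1 \supseteq M^2 \supseteq \cdots
\]
by $\mathfrak{g}$-submodules whose top quotient $M^0/M^1$ is the unique simple quotient $L(\lambda)$, so that $M_\mathfrak{p}^\mathfrak{g}(\lambda)$ is simple if and only if $M^1 = 0$. Since every $M^i$ is a submodule of $M_\mathfrak{p}^\mathfrak{g}(\lambda)$, it inherits the weight space decomposition and its character $\mathrm{ch}(M^i) = \sum_\mu (\dim M^i_\mu)\, e^\mu$ lies in the non-negative integer cone $\sum_\mu \mathbb{Z}_{\geq 0}\, e^\mu$. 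Therefore $\sum_{i \geq 1}\mathrm{ch}(M^i) = 0$ holds in the formal character ring if and only if every $M^i = 0$, which is equivalent to $M^1 = 0$.

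The second ingredient is Jantzen's sum formula, which I would import from \cite[Satz 3]{J} (see also the exposition in \cite[Theorem 2.2.11]{M}). Specialised to a maximal parabolic $\mathfrak{p}$ and scalar $\lambda$, it asserts
\[
\sum_{i \geq 1}\mathrm{ch}(M^i) \;=\; \sum_{\beta \in S_\lambda} Y\bigl(s_\beta(\lambda + \rho)\bigr).
\]
Jantzen's proof deforms the Shapovalov-type contravariant form on $M_\mathfrak{p}^\mathfrak{g}(\lambda)$ over a formal parameter, defines $M^i$ as the submodule on which the deformed form vanishes to order at least $i$, and extracts this character identity by tracking the order of vanishing of the determinant. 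A priori each positive root of $\mathfrak{g}$ might contribute a term; the collapse of the sum index to $S_\lambda \subseteq \Phi_\mathfrak{u}^+$ is then forced by the scalar hypothesis on $\lambda$ together with the preceding Proposition: roots in $\Phi_\mathfrak{l}^+$ produce $Y$-terms that vanish by part (1) since the shifted weight becomes $\Phi_\mathfrak{l}$-singular, while the $W_\mathfrak{l}$-orbit of any $\beta \in \Phi_\mathfrak{u}^+$ collapses to a single canonical summand via the sign rule $Y(\omega\mu) = (-1)^{l(\omega)} Y(\mu)$ of part (2).

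Combining the two ingredients yields the chain of equivalences
\[
M_\mathfrak{p}^\mathfrak{g}(\lambda)\text{ is simple} \iff M^1 = 0 \iff \sum_{i \geq 1}\mathrm{ch}(M^i) = 0 \iff \sum_{\beta \in S_\lambda} Y\bigl(s_\beta(\lambda + \rho)\bigr) = 0,
\]
which is the desired criterion. The principal obstacle is the sum formula itself: establishing the precise coefficients requires the deformation construction together with careful order-of-vanishing bookkeeping, and verifying that the sum index collapses exactly to $S_\lambda$ rather than to some larger subset of $\Phi^+$ calls on both parts of the preceding Proposition in tandem with the scalar hypothesis. Once the sum formula is in hand, the criterion drops out immediately from the positivity of submodule characters noted in the first paragraph.
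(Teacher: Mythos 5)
The paper itself does not prove this statement --- it is quoted from [\textbf{J}, Satz 3] and [\textbf{M}, Theorem 2.2.11] --- and your overall skeleton is precisely the standard derivation behind those citations: Jantzen's filtration with $M^1$ equal to the radical of the contravariant form (hence to the maximal proper submodule, so simplicity is equivalent to $M^1=0$), positivity of the characters $\mathrm{ch}\,M^i$, and the sum formula $\sum_{i\geq1}\mathrm{ch}\,M^i=\sum_{\beta\in S_\lambda}Y(s_\beta(\lambda+\rho))$. Granting the sum formula in this form, your chain of equivalences is correct.

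The genuine problem is your middle paragraph, which justifies the shape of the index set $S_\lambda\subseteq\Phi_\mathfrak{u}^+$ by a false vanishing claim. For $\beta\in\Phi_\mathfrak{l}^+$ the reflection $s_\beta$ lies in $W_\mathfrak{l}$, so by Proposition 2.2(2) one has $Y(s_\beta(\lambda+\rho))=-Y(\lambda+\rho)$; and since $\lambda+\rho$ is $\Phi_\mathfrak{l}^+$-dominant regular integral ($\langle\lambda,\alpha\rangle\in\mathbb{Z}_{\geq0}$ and $\langle\rho,\alpha\rangle>0$ for $\alpha\in\Phi_\mathfrak{l}^+$), Proposition 2.2(1) gives $Y(\lambda+\rho)\neq0$. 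Thus $s_\beta(\lambda+\rho)$ is $\Phi_\mathfrak{l}$-regular, not $\Phi_\mathfrak{l}$-singular, and the hypothetical Levi-root terms would \emph{not} vanish; if the sum formula really ran over all of $\Phi^+$, your proposed cancellation mechanism would fail. Likewise there is no ``collapse of $W_\mathfrak{l}$-orbits to a single canonical summand'': the formula is a sum over the individual roots $\beta\in S_\lambda$. The actual reason the index set is contained in $\Phi_\mathfrak{u}^+$ is internal to Jantzen's construction: the deformed contravariant form lives on $U(\mathfrak{u}_-)\otimes F_\lambda$, and its determinant on each weight space factors into linear forms $\langle\lambda+\rho,\beta\rangle-n$ attached only to $\beta\in\Phi_\mathfrak{u}^+$, the Levi directions entering only through the nondegenerate form on the finite-dimensional module $F_\lambda$; this is how [\textbf{J}, Satz 3] is stated, for arbitrary $\lambda\in\Lambda_\mathfrak{l}^+$, and neither the scalar hypothesis nor maximality of $\mathfrak{p}$ plays any role in restricting the index. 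If you import the sum formula as actually stated, your first and final paragraphs constitute a correct proof; as written, the middle paragraph asserts a vanishing that is false and misattributes the form of the sum to Proposition 2.2 and the scalar hypothesis.
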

To use Jantzen's criterion, we need to determine whether $\displaystyle{\sum_{\beta\in S_\lambda}Y(s_\beta(\lambda+\rho))}$ is 0 or not. This is answered by the following proposition.
\begin{proposition}[{\cite[Proposition A.2.4]{Ku}}]
The sum $\displaystyle{\sum_{\beta\in S_\lambda}Y(s_\beta(\lambda+\rho))}$ is nonzero if and only if there is $\beta_0\in S_\lambda$ satisfying the following two conditions: (a) $Y(s_{\beta_0}(\lambda+\rho))\neq0$; and (b) there do not exist $\beta\in S_\lambda\setminus\{\beta_0\}$ and $\omega\in W_\mathfrak{l}$ of odd length such that $s_{\beta_0}(\lambda+\rho)=\omega s_\beta(\lambda+\rho)$.
\end{proposition}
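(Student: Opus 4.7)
The plan is to decompose $\sum_{\beta\in S_\lambda}Y(s_\beta(\lambda+\rho))$ into blocks indexed by $W_\mathfrak{l}$-orbits of the points $s_\beta(\lambda+\rho)$, then apply the two properties of $Y$ from Proposition~2.1.3 and read off the criterion. Declare $\beta\sim\beta'$ in $S_\lambda$ whenever $s_\beta(\lambda+\rho)$ and $s_{\beta'}(\lambda+\rho)$ lie in a common $W_\mathfrak{l}$-orbit, and let $C_1,\dots,C_r$ be the resulting equivalence classes. Because $\lambda\in\Lambda_\mathfrak{l}^+$, $\rho$ is integral, and each $\beta\in S_\lambda$ is a root, one checks that each $s_\beta(\lambda+\rho)$ is $\Phi_\mathfrak{l}$-integral, so Proposition~2.1.3 is applicable to every summand.

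Fix a class $C$, choose a representative $\beta_0\in C$, and set $\mu_0:=s_{\beta_0}(\lambda+\rho)$. By Proposition~2.1.3(1), $Y(\mu_0)=0$ precisely when $\mu_0$ is $\Phi_\mathfrak{l}$-singular, which is equivalent to the $W_\mathfrak{l}$-stabilizer of $\mu_0$ being non-trivial: the stabilizer is generated by the reflections $s_\alpha$ for $\alpha\in\Phi_\mathfrak{l}$ with $(\mu_0,\alpha)=0$, and each such reflection has odd length, so the relation $Y(\omega\mu_0)=(-1)^{l(\omega)}Y(\mu_0)$ forces $Y(\mu_0)=0$ as soon as any such $\alpha$ exists. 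If $Y(\mu_0)=0$, Proposition~2.1.3(2) gives $Y(s_\beta(\lambda+\rho))=0$ for every $\beta\in C$, so the $C$-block is zero. If instead $Y(\mu_0)\ne0$, the stabilizer is trivial, so for each $\beta\in C$ there is a unique $\omega_\beta\in W_\mathfrak{l}$ with $\omega_\beta\mu_0=s_\beta(\lambda+\rho)$, and Proposition~2.1.3(2) yields
\[\sum_{\beta\in C}Y(s_\beta(\lambda+\rho))=\Bigl(\sum_{\beta\in C}(-1)^{l(\omega_\beta)}\Bigr)Y(\mu_0).\]

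Blocks from distinct classes are supported on disjoint $W_\mathfrak{l}$-orbits of weights and are therefore linearly independent in the span of $Y$-functions, so the global sum vanishes if and only if every block vanishes. This gives the ``if'' direction of the proposition immediately: hypotheses (a) and (b) on $\beta_0$ force $Y(\mu_0)\neq0$ and $l(\omega_\beta)\in2\mathbb{Z}$ for all $\beta\in C\setminus\{\beta_0\}$ (where $C$ is the class of $\beta_0$), so the signed sum over $C$ equals $|C|\neq0$ and the $C$-block is nonzero. The ``only if'' direction is the main obstacle: starting from a nonzero global sum, some class $C$ has both $Y(\mu_C)\neq0$ and a nonvanishing signed sum, and one must produce a basepoint $\beta_0\in C$ for which every $\omega_\beta$ is of even length, thereby realizing (b). I plan to handle this by tracking how the signs $(-1)^{l(\omega_\beta)}$ transform under a change of basepoint through the sign character $\epsilon:W_\mathfrak{l}\to\{\pm1\}$, which furnishes a multiplicative transformation rule, and then selecting $\beta_0$ to match the ``majority sign'' in the block so that condition (b) becomes available after this reparameterisation.
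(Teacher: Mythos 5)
Your block decomposition by $W_\mathfrak{l}$-orbits, the integrality check that makes the paper's Proposition 2.2 (which you cite as ``2.1.3'') applicable to every summand, and the resulting proof of the ``if'' direction are sound: if $\beta_0$ satisfies (a) and (b), then $s_{\beta_0}(\lambda+\rho)$ is $\Phi_\mathfrak{l}$-regular, every other member of its class is related to it by a necessarily even element of $W_\mathfrak{l}$, the class contributes $|C|\,Y(s_{\beta_0}(\lambda+\rho))\neq0$, and disjointness of the orbit supports rules out cancellation against other classes. For context, the paper itself gives no proof to compare against --- it quotes the statement from Kubo's thesis (Proposition A.2.4 there) --- and in Sections 3.3--3.7 only this ``if'' direction is ever invoked.

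The ``only if'' direction, however, is a genuine gap, and the repair you sketch cannot close it. Within a regular class $C$, fix the $\Phi_\mathfrak{l}$-dominant representative $\mu_C$ and write $\epsilon_\beta=(-1)^{l(\omega_\beta)}$ where $\omega_\beta\mu_C=s_\beta(\lambda+\rho)$; replacing the basepoint $\mu_C$ by $s_{\beta_0}(\lambda+\rho)$ multiplies every relative sign by the single constant $\epsilon_{\beta_0}$, so condition (b) is satisfiable by some $\beta_0\in C$ if and only if all the $\epsilon_\beta$, $\beta\in C$, coincide. A ``majority sign'' choice therefore buys nothing: if the signs within $C$ are mixed, no basepoint whatsoever satisfies (b). Hence what you actually need is the assertion that a regular class with mixed signs has vanishing signed sum $\sum_{\beta\in C}\epsilon_\beta$ (equivalently, that a class with nonzero signed sum has constant sign). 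As pure combinatorics this is false (signs $+,+,-$ give sum $1$), so it requires a structural input about the configuration $\{s_\beta(\lambda+\rho)\mid\beta\in S_\lambda\}$ --- for instance a sign-reversing pairing on each class, or positivity coming from the Jantzen filtration interpretation of the sum --- and no such input appears in your argument. As it stands you have proved only the implication ``(a) and (b) for some $\beta_0$ $\Rightarrow$ the sum is nonzero,'' not the converse.
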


\subsection{Special Lines}

Let us recall the construction of special lines in [\textbf{EHW}]. Henceforth $\mathfrak{g}$ is a finite-dimensional complex simple Lie algebra, and $\mathfrak{p}$ is a parabolic subalgebra of abelian type. Recall from Section 1 that $\zeta\in\mathfrak{h}^*$  is the unique weight such that $(\zeta,\alpha)=0$ for all $\alpha\in\Phi_\mathfrak{l}$ and $\langle\zeta,\gamma\rangle=1$.
\begin{definition}
If $\lambda_0\in\mathfrak{h}^*$ satisfies $(\lambda_0+\rho,\gamma)=0$, then $\lambda_0+z\zeta$ for $z\in\mathbb{C}$ is called a \textit{special line} in $\mathfrak{h}^*$.
\end{definition}
Obviously, every element $\lambda\in\mathfrak{h}^*$ can be expressed uniquely in the form $\lambda_0+z\zeta$ with $z\in\mathbb{C}$ and $(\lambda_0+\rho,\gamma)=0$. Hence, every element $\lambda\in\mathfrak{h}^*$ lies in some special line. In fact, given $\lambda\in\mathfrak{h}^*$, if $\lambda=\lambda_0+z\zeta$ with $z\in\mathbb{C}$ and $(\lambda_0+\rho,\gamma)=0$, then $z=\langle\lambda+\rho,\gamma\rangle$ and $\lambda_0=\lambda-z\zeta$. This shows uniqueness. Moreover, $\langle\lambda_0+\rho,\gamma\rangle=\langle\lambda-z\zeta+\rho,\gamma\rangle=\langle\lambda+\rho,\gamma\rangle-z=0$, and existence is showed.

The following result can be found in [\textbf{EHW}], which we state as a theorem here.
\begin{theorem}
For each special line $\lambda_0+z\zeta$, there exist three real numbers $A(\lambda_0)$, $B(\lambda_0)$, and $C(\lambda_0)$ with $C(\lambda_0)>0$, $A(\lambda_0)\leq B(\lambda_0)$, and $C(\lambda_0)^{-1}(B(\lambda_0)-A(\lambda_0))\in\mathbb{Z}_{\geq0}$ satisfying:
\begin{enumerate}[(1)]
\item If $z\in\mathbb{R}$ and $z<A(\lambda_0)$, then $M_\mathfrak{p}^\mathfrak{g}(\lambda_0+z\zeta)$ is simple.
\item If $z=A(\lambda_0)+iC(\lambda_0)$ for $0\leq i\leq C(\lambda_0)^{-1}(B(\lambda_0)-A(\lambda_0))$, then $M_\mathfrak{p}^\mathfrak{g}(\lambda_0+z\zeta)$ is reducible.
\end{enumerate}
\end{theorem}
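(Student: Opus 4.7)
The argument decomposes into the simplicity claim (1), which is immediate from Jantzen's criterion, and the reducibility claim (2), which exploits the Hermitian symmetric structure behind the abelian-type hypothesis.

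For (1), the key observation is that $\langle \zeta, \beta\rangle > 0$ for every $\beta \in \Phi_\mathfrak{u}^+$. Writing $\beta = n_0 \alpha_0 + \sum_{\alpha \in \Delta_\mathfrak{l}} n_\alpha \alpha$ with $\alpha_0$ the unique element of $\Delta_\mathfrak{u}$, the orthogonality $(\zeta,\Phi_\mathfrak{l})=0$ collapses this to $(\zeta,\beta) = n_0 (\zeta, \alpha_0)$; expanding $\gamma$ in the same way yields $(\zeta,\alpha_0) > 0$ from $\langle \zeta,\gamma\rangle = 1$. Along the special line $\lambda(z) := \lambda_0 + z\zeta$, the pairing
\[
\langle \lambda(z) + \rho, \beta\rangle = \langle \lambda_0 + \rho, \beta\rangle + z\langle \zeta, \beta\rangle
\]
is therefore strictly increasing in $z$, so for $z$ sufficiently negative $S_{\lambda(z)} = \emptyset$, and Theorem 2.1.1 delivers simplicity. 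Setting $A(\lambda_0)$ to be the smallest real $z$ at which $S_{\lambda(z)}$ becomes nonempty, a minimum attained because transitions occur on the discrete set $\{(n - \langle \lambda_0+\rho,\beta\rangle)/\langle \zeta,\beta\rangle : \beta \in \Phi_\mathfrak{u}^+, \, n \in \mathbb{Z}_{>0}\}$, completes (1).

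For (2), the plan is to construct a witness $\beta_i \in S_{\lambda(z_i)}$ at each candidate point $z_i = A(\lambda_0) + i\, C(\lambda_0)$ and verify the two clauses of Proposition 2.1.4. The natural source of witnesses is the Harish-Chandra strongly orthogonal root system $\{\gamma_1,\ldots,\gamma_r\} \subseteq \Phi_\mathfrak{u}^+$ (with $\gamma_r = \gamma$) attached to the underlying Hermitian symmetric pair: in abelian type all $\langle \zeta, \gamma_i\rangle$ agree up to a fixed short/long-root adjustment, which is precisely what produces $C(\lambda_0)$. Clause (a), namely $Y(s_{\beta_i}(\lambda(z_i)+\rho)) \neq 0$, would follow from Proposition 2.1.2(1) once we check that $s_{\beta_i}(\lambda(z_i)+\rho)$ is $\Phi_\mathfrak{l}$-regular, an explicit calculation using the strong orthogonality of the $\gamma_i$.

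The main obstacle is clause (b): ruling out odd-length $\omega \in W_\mathfrak{l}$ with $s_{\beta_i}(\lambda(z_i)+\rho) = \omega\, s_{\beta'}(\lambda(z_i)+\rho)$ for some $\beta' \in S_{\lambda(z_i)} \setminus \{\beta_i\}$. This is a delicate analysis of $W_\mathfrak{l}$-orbits of shifted weights, and in the EHW framework it is handled by reducing to the $\mathfrak{sl}_2$-triples along each $\gamma_i$ and computing the associated Shapovalov determinants --- equivalently, tracking where specific $K$-types become singular in $M_\mathfrak{p}^\mathfrak{g}(\lambda(z))$. Once (a) and (b) are secured at each $z_i$, the discrete pattern of reducibility points forces the triple $(A(\lambda_0), B(\lambda_0), C(\lambda_0))$, and the integrality $C(\lambda_0)^{-1}(B(\lambda_0) - A(\lambda_0)) \in \mathbb{Z}_{\geq 0}$ reflects that the strongly orthogonal chain has finite length and terminates at $\gamma = \gamma_r$.
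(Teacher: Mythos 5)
The paper does not actually prove this statement: its ``proof'' is a citation to [\textbf{EHW}] (Theorem 2.4 and Proposition 3.1(b)), so a self-contained argument such as yours would be a genuinely different route --- but as written it has real gaps. Your part (1) is correct only for the constant you define, namely the smallest $z$ with $S_{\lambda(z)}\neq\emptyset$, and that constant cannot be the $A(\lambda_0)$ of the theorem. Since $\langle\lambda(z)+\rho,\gamma\rangle=z$ by the definition of a special line, the maximal root $\gamma$ lies in $S_{\lambda(z)}$ for every $z\in\mathbb{Z}_{>0}$, so your $A(\lambda_0)\leq 1$; yet, for example, in the case $(SO(2n+1),SO(2)\times SO(2n-1))$ the module is simple at $z=1$ (indeed at every integer $1\leq z\leq 2n-3$, by Theorem 3.7 and Proposition 3.6), so claim (2) fails for your $A(\lambda_0)$. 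Nonemptiness of $S_\lambda$ does not imply reducibility --- that is precisely the content of Jantzen's full criterion (Theorem 2.3 together with Proposition 2.4) and of the cancellation exhibited in Proposition 3.6. If instead you intend $A(\lambda_0)$ to be the EHW first-reduction point, then (1) no longer follows from ``$S_{\lambda(z)}=\emptyset$'': for $z$ between your threshold and the EHW value, $S_{\lambda(z)}$ is typically nonempty and simplicity requires the cancellation in Jantzen's sum (or EHW's unitarity argument), which you do not supply.

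Part (2) is a plan rather than a proof. The triple $(A(\lambda_0),B(\lambda_0),C(\lambda_0))$ is never actually pinned down (``the discrete pattern of reducibility points forces the triple'' is circular), clause (a) is only asserted to be ``an explicit calculation,'' and the decisive clause (b) is deferred to ``Shapovalov determinants / $K$-types in the EHW framework,'' i.e.\ back to the very reference whose theorem is being reproved. Note also that EHW obtain reducibility at the points $A(\lambda_0)+iC(\lambda_0)$ from unitarity considerations (proper unitarizable quotients exist there), not from a Jantzen witness-root argument; and where the present paper needs reducibility at points beyond what it quotes from EHW, it performs case-by-case Jantzen computations (Propositions 3.9, 3.12, 3.15, 3.16, 3.20, 3.21) --- nothing in your sketch replaces that work uniformly, and in type $E_7$ the paper explicitly notes that verifying clause (b) directly is hard and requires the extra Lemma 3.19. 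So as it stands your argument establishes only that $M_\mathfrak{p}^\mathfrak{g}(\lambda_0+z\zeta)$ is simple for $z$ sufficiently negative; the existence of reduction points in the stated arithmetic-progression pattern --- the actual content of the theorem --- is not proved.
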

\begin{proof}
See Theorem 2.4 and Proposition 3.1(b) in [\textbf{EHW}].
\end{proof}
We shall make full use of the values of $A(\lambda_0)$, $B(\lambda_0)$, and $C(\lambda_0)$ listed in [\textbf{EHW}] to do computations for the Lie algebra pairs $(\mathfrak{g},\mathfrak{p})$ with $\mathfrak{g}$ simple and $\mathfrak{p}$ a parabolic subalgebra of abelian type, case by case according to the Hermitian symmetric pairs.

\section{Reducibility of Scalar Generalized Verma Modules of Abelian Type}

In this section, we shall do computations case by case. According to [\textbf{W}], up to isomorphism there are seven complex Lie algebra pairs $(\mathfrak{g},\mathfrak{p})$ with $\mathfrak{g}$ simple and $\mathfrak{p}$ a parabolic subalgebra of abelian type, which correspond to the Hermitian symmetric pairs of compact type:
\begin{enumerate}[(i)]
\item $(SU(p+q),S(U(p)\times U(q)))$ for $p,q\geq1$,
\item $(Sp(n),U(n))$ for $n\geq2$,
\item $(SO(2n+1),SO(2)\times SO(2n-1))$ for $n\geq2$,
\item $(SO(2n),SO(2)\times SO(2n-2))$ for $n\geq2$,
\item $(SO(2n),U(n))$ for $n\geq2$,
\item $(E_{6(-78)},SO(2)\times SO(10))$,
\item $(E_{7(-133)},SO(2)\times E_{6(-78)})$.
\end{enumerate}
In each case, it is easily verified that the Minkowski sum $\Phi_\mathfrak{u}^++\Phi_\mathfrak{u}^+:=\{\alpha+\beta\mid\alpha,\beta\in\Phi_\mathfrak{u}^+\}\subseteq\mathfrak{h}^*$ does not intersect $\Phi_\mathfrak{u}^+$, whence $\mathfrak{u}_+$ is abelian.

We see below that the computations for the first two pairs are almost trivial because we do not need to use Jantzen's criterion, while the computations for the last two pairs are more involved because the root structures and Weyl groups of exceptional types are complicated. Moreover, for the pair $(SO(2n),U(n))$, we have to discuss separately according to the parity of $n$. Retain all the notations and settings in the previous two sections.

\subsection{$(SU(p+q),S(U(p)\times U(q)))$ for $p,q\geq1$}

Let $\mathfrak{g}=\mathfrak{sl}(p+q,\mathbb{C})$ and let $\mathfrak{p}=\mathfrak{l}+\mathfrak{u}_+$ be a parabolic subalgebra of abelian type with $\mathfrak{l}=\mathfrak{s}(\mathfrak{gl}(p,\mathbb{C})\oplus \mathfrak{gl}(q,\mathbb{C}))$. We may choose a Cartan subalgebra $\mathfrak{h}\subseteq \mathfrak{l}$ and a simple root system $\Delta=\{e_i-e_{i+1}\mid1\leq i\leq p+q-1\}$, such that $\mathfrak{p}$ is standard with respect to $(\mathfrak{h},\Delta)$. Then $\Delta_\mathfrak{l}=\{e_i-e_{i+1}\mid1\leq i\leq p+q-1,i\neq p\}$ and $\Phi_\mathfrak{u}^+=\{e_i-e_j\mid i\leq p,j>p\}$. Moreover, we have $\rho=\displaystyle{\sum_{i=1}^{p+q}(\frac{p+q+1}{2}-i)e_i}$.

If $M_\mathfrak{p}^\mathfrak{g}(\lambda)$ is of scalar type, an easy computation shows that $\lambda=\displaystyle{\frac{aq}{p+q}\sum_{i=1}^pe_i-\frac{ap}{p+q}\sum_{j=p+1}^{p+q}e_j}$ for some $a\in\mathbb{C}$.
\begin{lemma}
\label{a}
Suppose $\lambda=\displaystyle{\frac{aq}{p+q}\sum_{i=1}^pe_i-\frac{ap}{p+q}\sum_{j=p+1}^{p+q}e_j}$ for some $a\in\mathbb{C}$.
\begin{enumerate}[(1)]
\item If $a\notin2-p-q+\mathbb{Z}_{\geq0}$, then $M_\mathfrak{p}^\mathfrak{g}(\lambda)$ is simple.
\item If $a\in\mathbb{Z}_{\geq0}$, then $M_\mathfrak{p}^\mathfrak{g}(\lambda)$ is reducible.
\end{enumerate}
\end{lemma}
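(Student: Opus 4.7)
The plan is to apply Theorem 2.1.1 (both the forward direction and its converse in the regular case) directly, since in this case the calculation of $\langle\lambda+\rho,\beta\rangle$ for $\beta\in\Phi_\mathfrak{u}^+$ is particularly transparent. For $\mathfrak{sl}(p+q,\mathbb{C})$ we have $(e_i,e_j)=\delta_{ij}$ up to a scalar that cancels in $\langle\cdot,\cdot\rangle$, so $\langle\mu,e_i-e_j\rangle=(\mu,e_i)-(\mu,e_j)$. Writing $\lambda+\rho=\sum_{i=1}^{p+q}c_ie_i$, one reads off $c_i=\frac{aq}{p+q}+\frac{p+q+1}{2}-i$ for $i\leq p$ and $c_j=-\frac{ap}{p+q}+\frac{p+q+1}{2}-j$ for $j>p$. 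Thus for any $\beta=e_i-e_j\in\Phi_\mathfrak{u}^+$ (with $i\leq p<j$),
\[
\langle\lambda+\rho,\beta\rangle=c_i-c_j=a+(j-i),
\]
and the possible values are exactly $a+1,a+2,\dots,a+(p+q-1)$.

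Part (1) is then immediate: the set $\{a+k:1\leq k\leq p+q-1\}$ avoids $\mathbb{Z}_{>0}$ if and only if $a\notin\bigcup_{k=1}^{p+q-1}(\mathbb{Z}_{>0}-k)=(2-p-q)+\mathbb{Z}_{\geq0}$, so under the hypothesis $a\notin 2-p-q+\mathbb{Z}_{\geq0}$ the forward direction of Theorem 2.1.1 gives simplicity of $M_\mathfrak{p}^\mathfrak{g}(\lambda)$.

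For Part (2) I would invoke the converse of Theorem 2.1.1, which requires checking that $\lambda+\rho$ is regular whenever $a\in\mathbb{Z}_{\geq0}$. Within the blocks $\{c_1,\dots,c_p\}$ and $\{c_{p+1},\dots,c_{p+q}\}$ the coefficients are strictly decreasing in the index, so pairings with roots $e_i-e_{i'}$ inside a single block are nonzero; across blocks the difference $c_i-c_j=a+(j-i)\geq a+1\geq 1$ is strictly positive. Hence $\lambda+\rho$ is regular. Taking $\beta_0=e_1-e_{p+q}\in\Phi_\mathfrak{u}^+$ yields $\langle\lambda+\rho,\beta_0\rangle=a+p+q-1\in\mathbb{Z}_{>0}$, so the converse forces $M_\mathfrak{p}^\mathfrak{g}(\lambda)$ to be reducible. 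There is no real obstacle here, which matches the author's remark that the case of type $A$ does not require the full strength of Jantzen's criterion (Theorem 2.1.4); the only point to be careful about is verifying regularity of $\lambda+\rho$ before applying the converse in Part (2), since without regularity the converse can fail.
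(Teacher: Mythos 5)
Your proposal is correct and follows essentially the same route as the paper: compute $\langle\lambda+\rho,e_i-e_j\rangle=a+(j-i)$ for $i\leq p<j$ and apply Theorem 2.1 in both directions. In fact you are slightly more careful than the paper's one-line proof, since you explicitly verify the regularity of $\lambda+\rho$ needed for the converse in part (2), a point the paper leaves implicit.
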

\begin{proof}
It is obvious that $\lambda+\rho$ is always $\Phi_\mathfrak{l}^+$-dominant integral. For $i\leq p$ and $j>p$, one computes that $\langle\lambda+\rho,e_i-e_j\rangle=j-i+a$. Now the conclusion follows from Theorem 2.1 immediately.
\end{proof}
The maximal root $\gamma$ in $\Phi^+$ is $e_1-e_n$, and it follows that $\zeta=\displaystyle{\frac{q}{p+q}\sum_{i=1}^pe_i-\frac{p}{p+q}\sum_{j=p+1}^{p+q}e_j}$. If we write $\lambda=\lambda_0+z\zeta$ in the special line, then we obtain $\lambda_0=\displaystyle{(\frac{q}{p+q}-q)\sum_{i=1}^pe_i+(p-\frac{p}{p+q})\sum_{j=p+1}^{p+q}e_j}$ and $a=z-p-q+1$. Now we may restate Lemma 3.1.
\begin{replemma}{a}
Suppose $\lambda=\displaystyle{\frac{aq}{p+q}\sum_{i=1}^pe_i+\frac{ap}{p+q}\sum_{j=p+1}^{p+q}e_j}$ for some $a\in\mathbb{C}$. Write $\lambda=\lambda_0+z\zeta$ in the special line.
\begin{enumerate}[(1)]
\item If $z\notin1+\mathbb{Z}_{\geq0}$, then $M_\mathfrak{p}^\mathfrak{g}(\lambda)$ is simple.
\item If $z\in p+q-1+\mathbb{Z}_{\geq0}$, then $M_\mathfrak{p}^\mathfrak{g}(\lambda)$ is reducible.
\end{enumerate}
\end{replemma}
\begin{proof}
Because $a=z-p-q+1$, the conclusion follows from Lemma 3.1 immediately.
\end{proof}
By Theorem 2.3, Lemma 7.3, and Theorem 7.4 in [\textbf{EHW}], one may check that $A(\lambda_0)=\max\{p,q\}$, $B(\lambda_0)=p+q-1$, and $C(\lambda_0)=1$ in this case.
\begin{theorem}
Suppose $\lambda=\displaystyle{\frac{aq}{p+q}\sum_{i=1}^pe_i+\frac{ap}{p+q}\sum_{j=p+1}^{p+q}e_j}$ for some $a\in\mathbb{C}$. Then $M_\mathfrak{p}^\mathfrak{g}(\lambda)$ is reducible if and only if $a\in1-\min\{p,q\}+\mathbb{Z}_{\geq0}$.
\end{theorem}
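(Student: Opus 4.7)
The plan is to combine Lemma~\ref{a}, which already classifies the obvious simple and reducible values of $a$ via Theorem~2.1, with the EHW data on the special line recorded immediately before the statement, namely $A(\lambda_0)=\max\{p,q\}$, $B(\lambda_0)=p+q-1$, $C(\lambda_0)=1$, and apply Theorem~2.7. I would assume without loss of generality that $p\le q$, so that $\min\{p,q\}=p$ and $A(\lambda_0)=q$; the case $p>q$ follows by the obvious symmetry $(p,q)\leftrightarrow(q,p)$ of the setup. Throughout I would use the reparametrization $z=a+p+q-1$ along the special line, already derived in the paragraph between Lemma~\ref{a} and its restatement.

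For the ``reducible'' direction I would first apply Theorem~2.7(2) to the arithmetic progression
$z\in\{A(\lambda_0),A(\lambda_0)+C(\lambda_0),\dots,B(\lambda_0)\}=\{q,q+1,\dots,p+q-1\}$,
obtaining reducibility on the corresponding $a$-values $\{1-p,2-p,\dots,0\}$. I would then invoke Lemma~\ref{a}(2) to extend this to every $a\in\mathbb{Z}_{\geq0}$, equivalently to every $z\geq p+q-1$. Concatenating the two yields reducibility precisely when $a\in\{1-p,2-p,\dots\}=1-\min\{p,q\}+\mathbb{Z}_{\geq0}$, which is exactly the claimed set.

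For the ``simple'' direction I would show that every $a$ outside $1-\min\{p,q\}+\mathbb{Z}_{\geq0}$ is caught by one of the two available simplicity statements. Lemma~\ref{a}(1) handles every $a\notin 2-p-q+\mathbb{Z}_{\geq0}$, which already covers all non-real $a$ and all real $a<2-p-q$. The only remaining values are the real $a\in\{2-p-q,3-p-q,\dots,-p\}$, which correspond to $z\in\{1,2,\dots,q-1\}$; each such $z$ is real and strictly less than $A(\lambda_0)=q$, so Theorem~2.7(1) delivers simplicity. Combining both directions finishes the proof.

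I do not expect a substantive obstacle here: the argument is essentially bookkeeping, and the only step requiring a little care is the $\min/\max$ swap that arises when translating the EHW bound $A(\lambda_0)=\max\{p,q\}$ back into a condition on $a=z-p-q+1$ involving $\min\{p,q\}$. The genuine input, namely the values of $A$, $B$, $C$ for this case, is already cited from [EHW, Theorem~2.3, Lemma~7.3, Theorem~7.4] in the sentence preceding the theorem.
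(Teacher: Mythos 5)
Your proposal is correct and follows essentially the same route as the paper: combine Lemma \ref{a} (equivalently its restatement on the special line) with the EHW special-line theorem (Theorem 2.6 in the paper, which you cite as Theorem 2.7), using $A(\lambda_0)=\max\{p,q\}$, $B(\lambda_0)=p+q-1$, $C(\lambda_0)=1$ and the change of variable $z=a+p+q-1$ to cover the reducible range by concatenating the progression $\{A,\dots,B\}$ with Lemma \ref{a}(2), and the simple range by Lemma \ref{a}(1) together with the $z<A(\lambda_0)$ criterion. The only cosmetic differences are your harmless WLOG $p\le q$, which the paper avoids by working directly with $\max\{p,q\}$ and $\min\{p,q\}$, and the reference-number slip just noted.
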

\begin{proof}
Write $\lambda=\lambda_0+z\zeta$ in the special line, and then Lemma 3.1$'$(1) and Theorem 2.6(1) show that $M_\mathfrak{p}^\mathfrak{g}(\lambda)$ is reducible only if $z\in\max\{p,q\}+\mathbb{Z}_{\geq0}$. On the other hand, Lemma 3.1$'$(2) and Theorem 2.6(2) show that if $z\in\max\{p,q\}+\mathbb{Z}_{\geq0}$, then $M_\mathfrak{p}^\mathfrak{g}(\lambda)$ is reducible. It follows that $M_\mathfrak{p}^\mathfrak{g}(\lambda)$ is reducible if and only if $z\in\max\{p,q\}+\mathbb{Z}_{\geq0}$, which is equivalent to $a\in\max\{p,q\}-p-q+1+\mathbb{Z}_{\geq0}=1-\min\{p,q\}+\mathbb{Z}_{\geq0}$.
\end{proof}

\subsection{$(Sp(n),U(n))$ for $n\geq2$}

Let $\mathfrak{g}=\mathfrak{sp}(2n,\mathbb{C})$ and let $\mathfrak{p}=\mathfrak{l}+\mathfrak{u}_+$ be a parabolic subalgebra of abelian type with $\mathfrak{l}=\mathfrak{gl}(n,\mathbb{C})$. We may choose a Cartan subalgebra $\mathfrak{h}\subseteq \mathfrak{l}$ and a simple root system $\Delta=\{e_i-e_{i+1}\mid1\leq i\leq n-1\}\cup\{2e_n\}$, such that $\mathfrak{p}$ is standard with respect to $(\mathfrak{h},\Delta)$. Then $\Delta_\mathfrak{l}=\{e_i-e_{i+1}\mid1\leq i\leq n-1\}$ and $\Phi_\mathfrak{u}^+=\{e_i+e_j\mid 1\leq i<j\leq n\}\cup\{2e_k\mid1\leq k\leq n\}$. Moreover, we have $\rho=\displaystyle{\sum_{i=1}^n(n-i+1)e_i}$.

If $M_\mathfrak{p}^\mathfrak{g}(\lambda)$ is of scalar type, an easy computation shows that $\lambda=\displaystyle{a\sum_{i=1}^ne_i}$ for some $a\in\mathbb{C}$.
\begin{lemma}
\label{b}
Suppose $\lambda=\displaystyle{a\sum_{i=1}^ne_i}$ for some $a\in\mathbb{C}$.
\begin{enumerate}[(1)]
\item If $a\notin1-n+\frac{1}{2}\mathbb{Z}_{\geq0}$, then $M_\mathfrak{p}^\mathfrak{g}(\lambda)$ is simple.
\item If $a\in\frac{1}{2}\mathbb{Z}_{\geq0}$, then $M_\mathfrak{p}^\mathfrak{g}(\lambda)$ is reducible.
\end{enumerate}
\end{lemma}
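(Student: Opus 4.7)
The plan is to reproduce, for the pair $(\mathfrak{sp}(2n,\mathbb{C}),\mathfrak{gl}(n,\mathbb{C}))$, the template of Lemma 3.1: a direct application of Jantzen's simplicity criterion, Theorem 2.1. Since $\lambda=a\sum_{i=1}^n e_i$ satisfies $\langle\lambda,e_i-e_{i+1}\rangle=0$ for every $1\leq i\leq n-1$, it is automatically $\Phi_\mathfrak{l}^+$-dominant integral, so Theorem 2.1 is applicable.

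First I would write $\lambda+\rho=\sum_{i=1}^n(a+n-i+1)e_i$ and evaluate the pairings $\langle\lambda+\rho,\beta\rangle$ on the two $W_\mathfrak{l}$-orbits that make up $\Phi_\mathfrak{u}^+$. Using $(e_i+e_j,e_i+e_j)=2$ and $(2e_k,2e_k)=4$, these are
\[
\langle\lambda+\rho,e_i+e_j\rangle=2a+2n+2-i-j,\qquad \langle\lambda+\rho,2e_k\rangle=a+n-k+1.
\]

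For part (1), I would observe that forcing either expression into $\mathbb{Z}_{>0}$ places $a$ in $1-n+\tfrac{1}{2}\mathbb{Z}_{\geq 0}$: the long-root family contributes $a\in k-n+\mathbb{Z}_{\geq 0}\subseteq 1-n+\mathbb{Z}_{\geq 0}$, and the short-root family contributes $a\in\tfrac{i+j-2n-1}{2}+\tfrac{1}{2}\mathbb{Z}_{\geq 0}$, whose widest range (at $i=1,\,j=2$) is precisely $1-n+\tfrac{1}{2}\mathbb{Z}_{\geq 0}$. Hence under the hypothesis of (1) we have $S_\lambda=\emptyset$, and the forward direction of Theorem 2.1 delivers simplicity.

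For part (2) I would exhibit one witness in $S_\lambda$ in each subcase: take $\beta=2e_n$ when $a\in\mathbb{Z}_{\geq 0}$, giving $\langle\lambda+\rho,\beta\rangle=a+1\in\mathbb{Z}_{>0}$, and take $\beta=e_1+e_2$ when $a\in\tfrac{1}{2}+\mathbb{Z}_{\geq 0}$, giving $\langle\lambda+\rho,\beta\rangle=2a+2n-1\in\mathbb{Z}_{>0}$. Invoking the converse direction of Theorem 2.1 then requires $\lambda+\rho$ to be regular, which is the only step needing a brief sanity check: a direct inspection of the three root types shows that $\langle\lambda+\rho,\alpha\rangle$ is bounded away from $0$ as soon as $a\geq 0$, so regularity holds throughout $\tfrac{1}{2}\mathbb{Z}_{\geq 0}$. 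There is no real obstacle here; the clean type $C_n$ combinatorics reduce the lemma to the routine calculations above.
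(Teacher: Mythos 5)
Your proposal is correct and follows essentially the same route as the paper: both compute $\langle\lambda+\rho,e_i+e_j\rangle=2a+2n-i-j+2$ and $\langle\lambda+\rho,2e_k\rangle=a+n-k+1$ and then apply Theorem 2.1 in both directions. The only difference is that you spell out the regularity check needed for the converse (and explicit witnesses in $S_\lambda$), which the paper leaves as "immediate"; this is a welcome but not essentially different elaboration.
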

\begin{proof}
It is obvious that $\lambda+\rho$ is always $\Phi_\mathfrak{l}^+$-dominant integral. For $1\leq i<j\leq n$ and $1\leq k\leq n$, one computes that $\langle\lambda+\rho,e_i+e_j\rangle=2a+2n-i-j+2$ and $\langle\lambda+\rho,2e_k\rangle=a+n-k+1$. Now the conclusion follows from Theorem 2.1 immediately.
\end{proof}
The maximal root $\gamma$ in $\Phi^+$ is $2e_1$, and it follows that $\zeta=\displaystyle{\sum_{i=1}^ne_i}$. If we write $\lambda=\lambda_0+z\zeta$ in the special line, then we obtain $\lambda_0=\displaystyle{-n\sum_{i=1}^ne_i}$ and $a=z-n$. Now we may restate Lemma 3.3.
\begin{replemma}{b}
Suppose $\lambda=\displaystyle{a\sum_{i=1}^ne_i}$ for some $a\in\mathbb{C}$. Write $\lambda=\lambda_0+z\zeta$ in the special line.
\begin{enumerate}[(1)]
\item If $z\notin1+\frac{1}{2}\mathbb{Z}_{\geq0}$, then $M_\mathfrak{p}^\mathfrak{g}(\lambda)$ is simple.
\item If $z\in n+\frac{1}{2}\mathbb{Z}_{\geq0}$, then $M_\mathfrak{p}^\mathfrak{g}(\lambda)$ is reducible.
\end{enumerate}
\end{replemma}
\begin{proof}
Because $a=z-n$, the conclusion follows from Lemma 3.3 immediately.
\end{proof}
By Theorem 2.3, Lemma 8.3, and Theorem 8.4 in [\textbf{EHW}], one may check that $A(\lambda_0)=\frac{n+1}{2}$, $B(\lambda_0)=n$, and $C(\lambda_0)=\frac{1}{2}$ in this case. Unlike the case in [\textbf{EHW}, Section 7], it is not trivial to obtain these three numbers in this case. In fact, we have to show $q=r$ in Lemma 8.3 and Theorem 8.4 in [\textbf{EHW}]. According to the construction of $R(\lambda_0)$ and $Q(\lambda_0)$ under [\textbf{EHW}, Theorem 2.4], we have $R(\lambda_0)=Q(\lambda_0)$ which is the full root system of $\mathfrak{sp}(n)$, and then [\textbf{EHW}, Equation (8.1)] shows $q=r=n$ in our case. The three numbers $A(\lambda_0)$, $B(\lambda_0)$, and $C(\lambda_0)$ in the cases from Section 3.3 to Section 3.7 can be obtained by similar computations, but we shall only mention the three numbers and the details are left to the reader.
\begin{theorem}
Suppose $\lambda=\displaystyle{a\sum_{i=1}^ne_i}$ for some $a\in\mathbb{C}$. Then $M_\mathfrak{p}^\mathfrak{g}(\lambda)$ is reducible if and only if $a\in\frac{1-n}{2}+\frac{1}{2}\mathbb{Z}_{\geq0}$.
\end{theorem}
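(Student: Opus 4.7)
The plan is to follow the same two-step template used in the proof of Theorem 3.2: translate the statement from the parameter $a$ to the special-line parameter $z$ via the relation $a = z - n$, and then combine Lemma 3.3$'$ with Theorem 2.6 using the values $A(\lambda_0) = \tfrac{n+1}{2}$, $B(\lambda_0) = n$, $C(\lambda_0) = \tfrac{1}{2}$ supplied above. Under the substitution $a = z - n$, the target condition $a \in \tfrac{1-n}{2} + \tfrac{1}{2}\mathbb{Z}_{\geq 0}$ becomes the condition $z \in \tfrac{n+1}{2} + \tfrac{1}{2}\mathbb{Z}_{\geq 0}$, so it suffices to prove that $M_\mathfrak{p}^\mathfrak{g}(\lambda)$ is reducible if and only if $z$ lies in this semi-infinite arithmetic progression.

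For the necessity direction, suppose $M_\mathfrak{p}^\mathfrak{g}(\lambda)$ is reducible. Lemma 3.3$'$(1) forces $z \in 1 + \tfrac{1}{2}\mathbb{Z}_{\geq 0}$, and in particular $z$ is real. Theorem 2.6(1) with $A(\lambda_0) = \tfrac{n+1}{2}$ then forces $z \geq \tfrac{n+1}{2}$. Since for $n \geq 2$ we have $\tfrac{n+1}{2} \geq 3/2 > 1$ and $\tfrac{n+1}{2} \in \tfrac{1}{2}\mathbb{Z}$, intersecting $1 + \tfrac{1}{2}\mathbb{Z}_{\geq 0}$ with $[\tfrac{n+1}{2}, \infty)$ yields exactly $\tfrac{n+1}{2} + \tfrac{1}{2}\mathbb{Z}_{\geq 0}$, as required.

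For the sufficiency direction, assume $z \in \tfrac{n+1}{2} + \tfrac{1}{2}\mathbb{Z}_{\geq 0}$ and split into two overlapping pieces. If $z \geq n$, then $z \in n + \tfrac{1}{2}\mathbb{Z}_{\geq 0}$, and Lemma 3.3$'$(2) directly gives reducibility. If $\tfrac{n+1}{2} \leq z \leq n - \tfrac{1}{2}$, write $z = A(\lambda_0) + i\, C(\lambda_0) = \tfrac{n+1}{2} + \tfrac{i}{2}$; the bound $i \leq C(\lambda_0)^{-1}(B(\lambda_0) - A(\lambda_0)) = n - 1$ is met, so Theorem 2.6(2) yields reducibility. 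Together these cases cover the entire progression.

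I do not anticipate a genuine obstacle here: the values $A(\lambda_0), B(\lambda_0), C(\lambda_0)$ have already been identified (using the identification $q = r = n$ noted above), and Lemma 3.3$'$ is exactly tailored to close the arithmetic progression on both ends. The only mild care needed is checking that the intersection in the necessity step collapses cleanly to $\tfrac{n+1}{2} + \tfrac{1}{2}\mathbb{Z}_{\geq 0}$, and that the EHW finite piece from Theorem 2.6(2) exactly fills the gap $[\tfrac{n+1}{2}, n)$ left uncovered by Lemma 3.3$'$(2); both are elementary verifications.
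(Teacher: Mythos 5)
Your proposal is correct and follows essentially the same route as the paper: necessity from Lemma 3.3$'$(1) combined with Theorem 2.6(1) (using $A(\lambda_0)=\tfrac{n+1}{2}$), sufficiency from Lemma 3.3$'$(2) together with Theorem 2.6(2) (using $B(\lambda_0)=n$, $C(\lambda_0)=\tfrac{1}{2}$), and the change of variable $a=z-n$. You merely make explicit the elementary verifications (realness of $z$, the intersection of the two progressions, and the gap-filling between $\tfrac{n+1}{2}$ and $n$) that the paper leaves implicit.
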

\begin{proof}
Write $\lambda=\lambda_0+z\zeta$ in the special line, and then Lemma 3.3$'$(1) and Theorem 2.6(1) show that $M_\mathfrak{p}^\mathfrak{g}(\lambda)$ is reducible only if $z\in\frac{n+1}{2}+\frac{1}{2}\mathbb{Z}_{\geq0}$. On the other hand, Lemma 3.3$'$(2) and Theorem 2.6(2) show that if $z\in\frac{n+1}{2}+\frac{1}{2}\mathbb{Z}_{\geq0}$, then $M_\mathfrak{p}^\mathfrak{g}(\lambda)$ is reducible. It follows that $M_\mathfrak{p}^\mathfrak{g}(\lambda)$ is reducible if and only if $z\in\frac{n+1}{2}+\frac{1}{2}\mathbb{Z}_{\geq0}$, which is equivalent to $a\in\frac{1-n}{2}+\frac{1}{2}\mathbb{Z}_{\geq0}$.
\end{proof}

\subsection{$(SO(2n+1),SO(2)\times SO(2n-1))$ for $n\geq2$}

Let $\mathfrak{g}=\mathfrak{so}(2n+1,\mathbb{C})$ and let $\mathfrak{p}=\mathfrak{l}+\mathfrak{u}_+$ be a parabolic subalgebra of abelian type with $\mathfrak{l}=\mathfrak{so}(2,\mathbb{C})\oplus\mathfrak{so}(2n-1,\mathbb{C})$. We may choose a Cartan subalgebra $\mathfrak{h}\subseteq \mathfrak{l}$ and a simple root system $\Delta=\{e_i-e_{i+1}\mid1\leq i\leq n-1\}\cup\{e_n\}$, such that $\mathfrak{p}$ is standard with respect to $(\mathfrak{h},\Delta)$. Then $\Delta_\mathfrak{l}=\{e_i-e_{i+1}\mid2\leq i\leq n-1\}\cup\{e_n\}$ and $\Phi_\mathfrak{u}^+=\{e_1\pm e_j\mid2\leq j\leq n\}\cup\{e_1\}$. Moreover, we have $\rho=\displaystyle{\sum_{i=1}^n(n-i+\frac{1}{2})e_i}$.

If $M_\mathfrak{p}^\mathfrak{g}(\lambda)$ is of scalar type, an easy computation shows that $\lambda=ae_1$ for some $a\in\mathbb{C}$.
\begin{lemma}
\label{d}
Suppose $\lambda=ae_1$ for some $a\in\mathbb{C}$.
\begin{enumerate}[(1)]
\item If $a\notin(3-2n+\mathbb{Z}_{\geq0})\cup(\frac{3}{2}-n+\mathbb{Z}_{\geq0})$, then $M_\mathfrak{p}^\mathfrak{g}(\lambda)$ is simple.
\item If $a\in\mathbb{Z}_{\geq0}\cup(\frac{3}{2}-n+\mathbb{Z}_{\geq0})$, then $M_\mathfrak{p}^\mathfrak{g}(\lambda)$ is reducible.
\end{enumerate}
\end{lemma}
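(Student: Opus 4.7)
The plan is to apply Jantzen's simplicity criterion (Theorem 2.1) in the same spirit as the proofs of Lemmas 3.1 and 3.3. Dominance of $\lambda+\rho$ with respect to $\Phi_\mathfrak{l}^+$ is automatic: since $\lambda = ae_1$ is orthogonal to every simple root in $\Delta_\mathfrak{l} = \{e_i-e_{i+1}\mid 2\leq i\leq n-1\}\cup\{e_n\}$, the dominance comes entirely from $\rho$.

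Next I would compute the three families of pairings over $\Phi_\mathfrak{u}^+ = \{e_1+e_j\mid 2\leq j\leq n\}\cup\{e_1-e_j\mid 2\leq j\leq n\}\cup\{e_1\}$: one finds $\langle\lambda+\rho, e_1+e_j\rangle = a+2n-j$, $\langle\lambda+\rho, e_1-e_j\rangle = a+j-1$, and $\langle\lambda+\rho, e_1\rangle = 2a+2n-1$, where the factor of $2$ in the last arises because $(e_1,e_1)=1$ while $(e_1\pm e_j,e_1\pm e_j)=2$. The condition that at least one $e_1+e_j$ pairing lands in $\mathbb{Z}_{>0}$ is $a\in 3-2n+\mathbb{Z}_{\geq 0}$ (widest at $j=2$); for the $e_1-e_j$ family it is $a\in 2-n+\mathbb{Z}_{\geq 0}$ (widest at $j=n$, subsumed in the previous set since $n\geq 2$); for the short root $e_1$ it is $a\in 1-n+\frac{1}{2}\mathbb{Z}_{\geq 0}$, whose integer half is again subsumed while the half-integer half is exactly $\frac{3}{2}-n+\mathbb{Z}_{\geq 0}$. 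The union of the three bad sets therefore collapses to $(3-2n+\mathbb{Z}_{\geq 0})\cup(\frac{3}{2}-n+\mathbb{Z}_{\geq 0})$, so the forward direction of Theorem 2.1 immediately yields (1).

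For (2), I would exhibit a positive integral pairing in each subcase and verify that $\lambda+\rho$ is regular, so the converse of Theorem 2.1 applies. When $a\in\mathbb{Z}_{\geq 0}$, $\langle\lambda+\rho, e_1-e_n\rangle = a+n-1\geq 1$ is a positive integer, and all other root pairings are either intra-Levi (depending only on $\rho$, and easily seen to be nonzero) or strictly positive integers. When $a\in\frac{3}{2}-n+\mathbb{Z}_{\geq 0}$, writing $a=\frac{3}{2}-n+m$ gives $\langle\lambda+\rho, e_1\rangle = 2m+2\in\mathbb{Z}_{>0}$, and the pairings along every root touching $e_1$ are nonzero half-integers, so $\lambda+\rho$ is again regular. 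The only mildly delicate step in the argument is the bookkeeping that the three arithmetic progressions collapse to precisely the two listed in the statement; the remainder is routine.
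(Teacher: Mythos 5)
Your proposal is correct and takes essentially the same approach as the paper: compute $\langle\lambda+\rho,e_1+e_j\rangle=a+2n-j$, $\langle\lambda+\rho,e_1-e_j\rangle=a+j-1$, and $\langle\lambda+\rho,e_1\rangle=2a+2n-1$, and apply Theorem 2.1 in both directions. The paper records only these pairings and declares the conclusion immediate, whereas you correctly spell out the bookkeeping that the three progressions collapse to $(3-2n+\mathbb{Z}_{\geq0})\cup(\frac{3}{2}-n+\mathbb{Z}_{\geq0})$ and the regularity of $\lambda+\rho$ needed for part (2).
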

\begin{proof}
It is obvious that $\lambda+\rho$ is always $\Phi_\mathfrak{l}^+$-dominant integral. For $2\leq j\leq n$, one computes that $\langle\lambda+\rho,e_1+e_j\rangle=a+2n-j$ and $\langle\lambda+\rho,e_1-e_j\rangle=a+j-1$. Moreover, $\langle\lambda+\rho,e_1\rangle=2a+2n-1$. Now the conclusion follows from Theorem 2.1 immediately.
\end{proof}
The maximal root $\gamma$ in $\Phi^+$ is $e_1+e_2$, and it follows that $\zeta=e_1$. If we write $\lambda=\lambda_0+z\zeta$ in the special line, then we obtain $\lambda_0=(2-2n)e_1$ and $a=z-2n+2$. Now we may restate Lemma 3.5.
\begin{replemma}{d}
Suppose $\lambda=ae_1$ for some $a\in\mathbb{C}$. Write $\lambda=\lambda_0+z\zeta$ in the special line.
\begin{enumerate}[(1)]
\item If $z\notin(1+\mathbb{Z}_{\geq0})\cup(n-\frac{1}{2}+\mathbb{Z}_{\geq0})$, then $M_\mathfrak{p}^\mathfrak{g}(\lambda)$ is simple.
\item If $z\in(n-\frac{1}{2}+\mathbb{Z}_{\geq0})\cup(2n-2+\mathbb{Z}_{\geq0})$, then $M_\mathfrak{p}^\mathfrak{g}(\lambda)$ is reducible.
\end{enumerate}
\end{replemma}
\begin{proof}
Because $a=z-2n+2$, the conclusion follows from Lemma 3.5 immediately.
\end{proof}
By Theorem 2.3, Lemma 11.3, and Theorem 11.4 in [\textbf{EHW}], one may check that $A(\lambda_0)=n-\frac{1}{2}$, $B(\lambda_0)=2n-2$, and $C(\lambda_0)=n-\frac{3}{2}$ in this case.
\begin{proposition}
Suppose $\lambda=ae_1$ for some $a\in\mathbb{C}$. Write $\lambda=\lambda_0+z\zeta$ in the special line. If $z\in\mathbb{Z}$ and $A(\lambda_0)<z<B(\lambda_0)$, then $M_\mathfrak{p}^\mathfrak{g}(\lambda)$ is simple.
\end{proposition}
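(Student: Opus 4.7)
The plan is to apply Jantzen's criterion (Theorem 2.4) and show that the sum in Proposition 2.5 vanishes through a single pair of cancelling terms, all other summands being zero. With $n \leq z \leq 2n-3$ and $\mu := \lambda + \rho$, one has $\mu_1 = z - n + \tfrac{3}{2}$ and $\mu_i = n - i + \tfrac{1}{2}$ for $i \geq 2$, and the central observation is the ``accidental'' coincidence $\mu_1 = \mu_{j_0}$ where $j_0 := 2n - z - 1 \in \{2, \ldots, n-1\}$. This coincidence corresponds to a singularity along $e_1 - e_{j_0} \in \Phi_\mathfrak{u}^+$, which lies outside $\Phi_\mathfrak{l}$ but will nevertheless dictate the behaviour of every $Y(s_\beta(\mu))$.

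First I would list $S_\lambda$ from the inner products in the proof of Lemma 3.5: for $z \geq n$ one finds $S_\lambda = \{e_1 + e_j : 2 \leq j \leq n\} \cup \{e_1 - e_j : j_0 < j \leq n\} \cup \{e_1\}$. Then, for each $\beta \in S_\lambda$ I would write out $s_\beta(\mu)$ explicitly and apply Proposition 2.2(1). Since $W_\mathfrak{l} = W(B_{n-1})$ acts only on positions $2, \ldots, n$, what matters is whether the multiset of absolute values there contains a repetition or a zero. A routine case check yields the dichotomy: if $\beta = e_1 + e_j$ with $j \neq j_0$, or $\beta = e_1 - e_j$ with $j > j_0$, then the untouched coordinate $\mu_{j_0}$ at position $j_0$ and the newly introduced $\pm\mu_1 = \pm\mu_{j_0}$ at position $j$ have equal absolute value, so $s_\beta(\mu)$ is $\Phi_\mathfrak{l}$-singular and $Y(s_\beta(\mu)) = 0$. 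Only $\beta_0 := e_1 + e_{j_0}$ and $\beta_1 := e_1$ survive, and for both of these the absolute values in positions $2, \ldots, n$ are exactly $\mu_2, \ldots, \mu_n$, all distinct and positive.

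It remains to cancel the two surviving terms. A direct coordinate comparison shows that $s_{\beta_0}(\mu)$ and $s_{\beta_1}(\mu)$ agree in every position except $j_0$, where they differ by a sign; equivalently, $s_{\beta_0}(\mu) = s_{e_{j_0}}\, s_{\beta_1}(\mu)$. Because $j_0 \geq 2$, the root $e_{j_0}$ belongs to $\Phi_\mathfrak{l}$, so $s_{e_{j_0}} \in W_\mathfrak{l}$, and as a reflection it has odd length. Proposition 2.2(2) then gives $Y(s_{\beta_0}(\mu)) = -Y(s_{\beta_1}(\mu))$, so $\sum_{\beta \in S_\lambda} Y(s_\beta(\mu)) = 0$, and Theorem 2.4 yields simplicity of $M_\mathfrak{p}^\mathfrak{g}(\lambda)$.

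The main obstacle is the combinatorial verification in the second paragraph: one must systematically enumerate which reflections $s_\beta$ for $\beta \in S_\lambda$ create a coordinate collision in $\{2, \ldots, n\}$, using both the strict decrease $\mu_2 > \cdots > \mu_n > 0$ and the single collision $\mu_1 = \mu_{j_0}$. Once that case analysis is in place, the rest is an immediate application of Propositions 2.2 and 2.5.
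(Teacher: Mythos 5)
Your proof is correct and follows essentially the same route as the paper: identify $S_\lambda$, kill all but the two terms $\beta_0=e_1+e_{2n-z-1}$ and $e_1$ by exhibiting a $\Phi_\mathfrak{l}$-singularity coming from the coincidence $\mu_1=\mu_{2n-z-1}$, and cancel the two surviving regular terms via $s_{\beta_0}(\lambda+\rho)=s_{e_{2n-z-1}}s_{e_1}(\lambda+\rho)$ with $s_{e_{2n-z-1}}\in W_\mathfrak{l}$ of odd length, then invoke Jantzen's criterion (Theorem 2.3 in the paper; the nonvanishing criterion is Proposition 2.4 — your numbering is off by one but the content is right).
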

\begin{proof}
Because $a=z-2n+2$, in fact we already computed that $\langle\lambda+\rho,e_1+e_j\rangle=z-j+2$, $\langle\lambda+\rho,e_1-e_j\rangle=z-2n+j+1$ for $2\leq j\leq n$, and $\langle\lambda+\rho,e_1\rangle=2z-2n+3$. If $z\in\mathbb{Z}$ and $A(\lambda_0)=n-\frac{1}{2}<z<B(\lambda_0)=2n-2$, then\[S_\lambda=\{e_1+e_j\mid2\leq j\leq n\}\cup\{e_1-e_j\mid2n-z-1<j\leq n\}\cup\{e_1\}.\]Since $\lambda+\rho=(z-n+\frac{3}{2})e_1+\displaystyle{\sum_{i=2}^n}(n-i+\frac{1}{2})e_i$, we have that $(s_{e_1+e_j}(\lambda+\rho),e_{2n-z-1}+e_j)=0$ for $2\leq j\leq n$ and $j\neq2n-z-1$. It follows if $2\leq j\leq n$ and $j\neq2n-z-1$, then $s_{e_1+e_j}(\lambda+\rho)$ is $\Phi_\mathfrak{l}$-singular, and hence $Y(s_{e_1+e_j}(\lambda+\rho))=0$ by Proposition 2.2(1). Similarly, we have $(s_{e_1-e_j}(\lambda+\rho),e_{2n-z-1}-e_j)=0$ for $2n-z-1<j\leq n$, so $Y(s_{e_1-e_j}(\lambda+\rho))=0$ for $2n-z-1<j\leq n$. On the other hand, it is easy to check that $s_{e_1+e_{2n-z-1}}(\lambda+\rho)$ and $s_{e_1}(\lambda+\rho)$ are $\Phi_\mathfrak{l}$-regular integral, and hence $Y(s_{e_1+e_{2n-z-1}}(\lambda+\rho))\neq0$ and $Y(s_{e_1}(\lambda+\rho))\neq0$ by Proposition 2.2(1). Moreover, because $s_{e_1+e_{2n-z-1}}(\lambda+\rho)=s_{e_{2n-z-1}}s_{e_1}(\lambda+\rho)$ and $s_{e_{2n-z-1}}\in W_\mathfrak{l}$ is of odd length, it follows from Proposition 2.2(2) that $Y(s_{e_1+e_{2n-z-1}}(\lambda+\rho))+Y(s_{e_1}(\lambda+\rho))=0$. Now\[\displaystyle{\sum_{\beta\in S_\lambda}Y(s_\beta(\lambda+\rho))}=Y(s_{e_1+e_{2n-z-1}}(\lambda+\rho))+Y(s_{e_1}(\lambda+\rho))=0.\]By Theorem 2.3, $M_\mathfrak{p}^\mathfrak{g}(\lambda)$ is simple.
\end{proof}
\begin{theorem}
Suppose $\lambda=ae_1$ for some $a\in\mathbb{C}$. Then $M_\mathfrak{p}^\mathfrak{g}(\lambda)$ is reducible if and only if $a\in\mathbb{Z}_{\geq0}\cup(\frac{3}{2}-n+\mathbb{Z}_{\geq0})$.
\end{theorem}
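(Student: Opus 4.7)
The plan is to combine Lemma 3.5$'$, Theorem 2.6, and Proposition 3.7 to cover every possible value of $z$ once $\lambda$ is written as $\lambda_0+z\zeta$. Since the reducibility set the theorem predicts is exactly $(n-\tfrac{1}{2}+\mathbb{Z}_{\geq 0})\cup(2n-2+\mathbb{Z}_{\geq 0})$ in the $z$-variable (using $a=z-2n+2$), I would translate the statement into $z$-language and then argue that the remaining values of $z$ force simplicity.

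First I would dispatch the (easy) ``if'' direction: Lemma 3.5$'$(2) says reducibility holds whenever $z\in(n-\tfrac{1}{2}+\mathbb{Z}_{\geq 0})\cup(2n-2+\mathbb{Z}_{\geq 0})$, which under $a=z-2n+2$ is exactly $a\in\mathbb{Z}_{\geq 0}\cup(\tfrac{3}{2}-n+\mathbb{Z}_{\geq 0})$.

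For the ``only if'' direction I would show that every $z$ outside this set gives a simple module, partitioning into cases. If $z\notin(1+\mathbb{Z}_{\geq 0})\cup(n-\tfrac{1}{2}+\mathbb{Z}_{\geq 0})$, Lemma 3.5$'$(1) gives simplicity directly. The only remaining $z$ not in the predicted reducibility set are therefore integers with $z\in 1+\mathbb{Z}_{\geq 0}$ and $z<2n-2$ (the half-integer progression $n-\tfrac{1}{2}+\mathbb{Z}_{\geq 0}$ never meets the integers, so there is nothing to subtract there). These integers split into $1\leq z\leq n-1$, for which $z<A(\lambda_0)=n-\tfrac{1}{2}$ so Theorem 2.6(1) delivers simplicity, and $n\leq z\leq 2n-3$, for which $A(\lambda_0)<z<B(\lambda_0)=2n-2$ and Proposition 3.7 (the Jantzen-criterion computation already performed) delivers simplicity. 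Converting $z$ back to $a$ yields precisely the claimed set.

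The only step requiring any real work has in fact been done already in Proposition 3.7, where the cancellation of $Y$-terms in Jantzen's sum was verified, so the proof of the theorem itself should be essentially a bookkeeping exercise, consisting of recording the cases above and observing that the half-integer progression is disjoint from the integers, so that the apparent ``gap'' between Lemma 3.5$'$(1) and (2) is entirely filled by the EHW bound and Proposition 3.7. I would therefore expect no real obstacle beyond a clean case analysis and the arithmetic identity $\max\{n-\tfrac{1}{2},\,2n-2\}-2n+2$ reducing the two progressions in $z$ to the two progressions in $a$.
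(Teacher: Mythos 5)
Your proposal is correct and follows essentially the same route as the paper: Lemma 3.5$'$(2) for the reducibility direction, and Lemma 3.5$'$(1), Theorem 2.6(1), and the Jantzen-criterion computation (which is Proposition 3.6 in the paper, not 3.7) for the simplicity of the remaining integer values $1\leq z\leq 2n-3$, followed by the substitution $a=z-2n+2$. Your write-up just makes explicit the case split that the paper's proof states in one line, and the only blemish is the off-by-one reference to the proposition.
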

\begin{proof}
Write $\lambda=\lambda_0+z\zeta$ in the special line, and then Lemma 3.5$'$(1), Proposition 3.6, and Theorem 2.6(1) show that $M_\mathfrak{p}^\mathfrak{g}(\lambda)$ is reducible only if $z\in(n-\frac{1}{2}+\mathbb{Z}_{\geq0})\cup(2n-2+\mathbb{Z}_{\geq0})$. On the other hand, Lemma 3.5$'$(2) shows that if $z\in(n-\frac{1}{2}+\mathbb{Z}_{\geq0})\cup(2n-2+\mathbb{Z}_{\geq0})$, then $M_\mathfrak{p}^\mathfrak{g}(\lambda)$ is reducible. It follows that $M_\mathfrak{p}^\mathfrak{g}(\lambda)$ is reducible if and only if $z\in(n-\frac{1}{2}+\mathbb{Z}_{\geq0})\cup(2n-2+\mathbb{Z}_{\geq0})$, which is equivalent to $a\in\mathbb{Z}_{\geq0}\cup(\frac{3}{2}-n+\mathbb{Z}_{\geq0})$.
\end{proof}

\subsection{$(SO(2n),SO(2)\times SO(2n-2))$ for $n\geq2$}

Let $\mathfrak{g}=\mathfrak{so}(2n,\mathbb{C})$ and let $\mathfrak{p}=\mathfrak{l}+\mathfrak{u}_+$ be a parabolic subalgebra of abelian type with $\mathfrak{l}=\mathfrak{so}(2,\mathbb{C})\oplus\mathfrak{so}(2n-2,\mathbb{C})$. We may choose a Cartan subalgebra $\mathfrak{h}\subseteq \mathfrak{l}$ and a simple root system $\Delta=\{e_i-e_{i+1}\mid1\leq i\leq n-1\}\cup\{e_{n-1}+e_n\}$, such that $\mathfrak{p}$ is standard with respect to $(\mathfrak{h},\Delta)$. Then $\Delta_\mathfrak{l}=\{e_i-e_{i+1}\mid2\leq i\leq n-1\}\cup\{e_{n-1}+e_n\}$ and $\Phi_\mathfrak{u}^+=\{e_1\pm e_j\mid2\leq j\leq n\}$. Moreover, we have $\rho=\displaystyle{\sum_{i=1}^n(n-i)e_i}$.

If $M_\mathfrak{p}^\mathfrak{g}(\lambda)$ is of scalar type, an easy computation shows that $\lambda=ae_1$ for some $a\in\mathbb{C}$.
\begin{lemma}
\label{c}
Suppose $\lambda=ae_1$ for some $a\in\mathbb{C}$.
\begin{enumerate}[(1)]
\item If $a\notin4-2n+\mathbb{Z}_{\geq0}$, then $M_\mathfrak{p}^\mathfrak{g}(\lambda)$ is simple.
\item If $a\in\mathbb{Z}_{\geq0}$, then $M_\mathfrak{p}^\mathfrak{g}(\lambda)$ is reducible.
\end{enumerate}
\end{lemma}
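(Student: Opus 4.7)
The plan is to mirror the arguments given for the analogous Lemmas 3.1, 3.3, and 3.5: apply Jantzen's simplicity criterion (Theorem 2.1) directly after a short root-pairing computation. First I will note that since $\lambda = ae_1$ pairs to zero with every simple root in $\Delta_\mathfrak{l} = \{e_i - e_{i+1} : 2 \leq i \leq n-1\} \cup \{e_{n-1}+e_n\}$, and $\rho$ pairs to $1$ with each of them, the weight $\lambda + \rho$ is automatically $\Phi_\mathfrak{l}^+$-dominant integral, so Theorem 2.1 is applicable for every $a \in \mathbb{C}$.

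The main computation is the list of pairings on $\Phi_\mathfrak{u}^+ = \{e_1 \pm e_j : 2 \leq j \leq n\}$. Using $\rho = \sum_{i=1}^n(n-i)e_i$, a direct calculation gives
\[
\langle \lambda+\rho, e_1+e_j\rangle = a + 2n - j - 1, \qquad \langle \lambda+\rho, e_1-e_j\rangle = a + j - 1,
\]
for $2 \leq j \leq n$. As $j$ ranges over $\{2,\dots,n\}$, these two families together exhaust the arithmetic progression $a+1, a+2, \dots, a+2n-3$ (with the single value $a + n - 1$ appearing twice, at $j = n$ in the first family and $j = n$ in the second). Consequently, the set $S_\lambda$ is empty precisely when none of $a+1, \dots, a+2n-3$ lies in $\mathbb{Z}_{>0}$, which happens exactly when $a \notin 4 - 2n + \mathbb{Z}_{\geq 0}$; part~(1) then follows from the simplicity half of Theorem 2.1. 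For part~(2), if $a \in \mathbb{Z}_{\geq 0}$ then $a + 1 \in \mathbb{Z}_{>0}$ witnesses $S_\lambda \neq \emptyset$, so the converse half of Theorem 2.1 yields reducibility; as in the preceding lemmas, one checks routinely that $\lambda+\rho$ is $\Phi$-regular for such $a$ (the only way regularity could fail would force $a$ into a small set of strictly negative integers, disjoint from $\mathbb{Z}_{\geq 0}$).

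There is no real obstacle here — the only mild subtlety is the regularity verification needed to invoke the converse in Theorem 2.1, but since $a \geq 0$ keeps the $e_1$-coordinate of $\lambda+\rho$ strictly larger than the remaining coordinates $n-2, n-3, \dots, 0$, regularity in the $D_n$ root system is immediate. Hence the proof will be a two- or three-line appeal to Theorem 2.1 after displaying the pairings, in exact parallel with the proof of Lemma 3.5.
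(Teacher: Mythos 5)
Your proposal is correct and follows essentially the same route as the paper: verify that $\lambda+\rho$ is $\Phi_\mathfrak{l}^+$-dominant integral, compute $\langle\lambda+\rho,e_1\pm e_j\rangle=a+2n-j-1$ and $a+j-1$ for $2\leq j\leq n$, and apply Theorem 2.1 in both directions. The paper's proof is just terser (it leaves the regularity check for the converse implicit), so your explicit verification that $\lambda+\rho$ is regular for $a\in\mathbb{Z}_{\geq0}$ is a welcome but not divergent addition.
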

\begin{proof}
It is obvious that $\lambda+\rho$ is always $\Phi_\mathfrak{l}^+$-dominant integral. For $2\leq j\leq n$, one computes that $\langle\lambda+\rho,e_1+e_j\rangle=a+2n-j-1$ and $\langle\lambda+\rho,e_1-e_j\rangle=a+j-1$. Now the conclusion follows from Theorem 2.1 immediately.
\end{proof}
The maximal root $\gamma$ in $\Phi^+$ is $e_1+e_2$, and it follows that $\zeta=e_1$. If we write $\lambda=\lambda_0+z\zeta$ in the special line, then we obtain $\lambda_0=(3-2n)e_1$ and $a=z-2n+3$. Now we may restate Lemma 3.8.
\begin{replemma}{c}
Suppose $\lambda=ae_1$ for some $a\in\mathbb{C}$. Write $\lambda=\lambda_0+z\zeta$ in the special line.
\begin{enumerate}[(1)]
\item If $z\notin1+\mathbb{Z}_{\geq0}$, then $M_\mathfrak{p}^\mathfrak{g}(\lambda)$ is simple.
\item If $z\in2n-3+\mathbb{Z}_{\geq0}$, then $M_\mathfrak{p}^\mathfrak{g}(\lambda)$ is reducible.
\end{enumerate}
\end{replemma}
\begin{proof}
Because $a=z-2n+3$, the conclusion follows from Lemma 3.8 immediately.
\end{proof}
By Theorem 2.3, Lemma 10.3, and Theorem 10.4 in [\textbf{EHW}], one may check that $A(\lambda_0)=n-1$, $B(\lambda_0)=2n-3$, and $C(\lambda_0)=n-2$ in this case.
\begin{proposition}
Suppose $\lambda=ae_1$ for some $a\in\mathbb{C}$. Write $\lambda=\lambda_0+z\zeta$ in the special line. If $z\in\mathbb{Z}$ and $A(\lambda_0)<z<B(\lambda_0)$, then $M_\mathfrak{p}^\mathfrak{g}(\lambda)$ is reducible.
\end{proposition}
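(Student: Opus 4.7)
The plan is to apply Jantzen's criterion (Theorem 2.3) together with the singularity test in Proposition 2.2(1), closely paralleling the computation in Proposition 3.6 but exploiting a crucial structural difference: in type $D_n$ there is no short positive root $e_1$ in $\Phi_\mathfrak{u}^+$, so the cancellation that produced simplicity in the $B_n$ case cannot occur here.

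First I would substitute $a=z-2n+3$ to rewrite $\lambda+\rho=(z-n+2)e_1+\sum_{i=2}^{n}(n-i)e_i$ and recompute the pairings $\langle\lambda+\rho,e_1+e_j\rangle=z-j+2$ and $\langle\lambda+\rho,e_1-e_j\rangle=z-2n+j+2$ for $2\leq j\leq n$. For $z\in\mathbb{Z}$ with $n-1<z<2n-3$, this forces
\[
S_\lambda=\{e_1+e_j\mid 2\leq j\leq n\}\cup\{e_1-e_j\mid 2n-z-2<j\leq n\},
\]
and the integer $k:=2n-z-2$ lies in $\{2,3,\ldots,n-2\}$, so $e_1+e_k\in S_\lambda$.

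Next I would write out
\[
s_{e_1+e_j}(\lambda+\rho)=(j-n)e_1+(n-z-2)e_j+\sum_{\substack{2\leq i\leq n\\ i\neq j}}(n-i)e_i,\qquad
s_{e_1-e_j}(\lambda+\rho)=(n-j)e_1+(z-n+2)e_j+\sum_{\substack{2\leq i\leq n\\ i\neq j}}(n-i)e_i,
\]
and, using $\Phi_\mathfrak{l}=\{\pm e_i\pm e_k\mid 2\leq i<k\leq n\}$, check case-by-case which pairings $(s_\beta(\lambda+\rho),\alpha)$ vanish. The punchline I expect is that the root $e_k\pm e_j$ (with $k=2n-z-2$) witnesses $\Phi_\mathfrak{l}$-singularity for every $\beta\in S_\lambda$ except $\beta=e_1+e_k$: for $\beta=e_1+e_j$ with $j\neq k$ one gets $(s_\beta(\lambda+\rho),e_j+e_k)=0$, and for $\beta=e_1-e_j$ (which forces $j>k$) one gets $(s_\beta(\lambda+\rho),e_k-e_j)=0$. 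By Proposition 2.2(1), $Y(s_\beta(\lambda+\rho))=0$ for all these $\beta$.

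It remains to verify that $s_{e_1+e_k}(\lambda+\rho)=(n-z-2)e_1+(n-z-2)e_k+\sum_{i\neq k}(n-i)e_i$ is $\Phi_\mathfrak{l}$-regular integral; since $z\geq n$, the scalar $n-z-2$ is bounded away from all the other coordinates $(n-i)$ for $2\leq i\leq n$ in absolute value in the right way, so no $\pm e_i\pm e_{i'}$ in $\Phi_\mathfrak{l}$ annihilates it. Hence $Y(s_{e_1+e_k}(\lambda+\rho))\neq 0$, and
\[
\sum_{\beta\in S_\lambda}Y(s_\beta(\lambda+\rho))=Y(s_{e_1+e_k}(\lambda+\rho))\neq 0,
\]
so Theorem 2.3 gives reducibility. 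The main obstacle is the bookkeeping in the regularity/singularity check, particularly ensuring that the apparent analogue of the cancelling pair $\{s_{e_1+e_{2n-z-1}}(\lambda+\rho),s_{e_1}(\lambda+\rho)\}$ from the $B_n$ proof does not reappear here — and the reason it cannot is exactly that $e_1\notin\Phi_\mathfrak{u}^+$ in type $D_n$.
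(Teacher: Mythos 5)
Your proposal is correct and follows essentially the same route as the paper's proof: after computing $S_\lambda$, you kill every term $Y(s_\beta(\lambda+\rho))$ except $\beta=e_1+e_{2n-z-2}$ by exhibiting the singularizing roots $e_{2n-z-2}+e_j$ and $e_{2n-z-2}-e_j$, verify that the surviving term is $\Phi_\mathfrak{l}$-regular integral, and conclude reducibility from Theorem 2.3. The computations (including $s_{e_1+e_k}(\lambda+\rho)$ having coefficient $n-z-2$ at both $e_1$ and $e_k$, whose absolute value coincides only with the excluded coordinate $n-k$) check out, so the argument matches the paper's.
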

\begin{proof}
Because $a=z-2n+3$, in fact we already computed that $\langle\lambda+\rho,e_1+e_j\rangle=z-j+2$ and $\langle\lambda+\rho,e_1-e_j\rangle=z-2n+j+2$ for $2\leq j\leq n$. If $z\in\mathbb{Z}$ and $A(\lambda_0)=n-1<z<B(\lambda_0)=2n-3$, then\[S_\lambda=\{e_1+e_j\mid2\leq j\leq n\}\cup\{e_1-e_j\mid2n-z-2<j\leq n\}.\]Since $\lambda+\rho=(z-n+2)e_1+\displaystyle{\sum_{i=2}^n}(n-i)e_i$, we have that $(s_{e_1+e_j}(\lambda+\rho),e_{2n-z-2}+e_j)=0$ for $2\leq j\leq n$ and $j\neq2n-z-2$. It follows that if $2\leq j\leq n$ and $j\neq2n-z-2$, then $s_{e_1+e_j}(\lambda+\rho)$ is $\Phi_\mathfrak{l}$-singular, and hence $Y(s_{e_1+e_j}(\lambda+\rho))=0$ by Proposition 2.2(1). Similarly, if $2n-z-2<j\leq n$, then $(s_{e_1-e_j}(\lambda+\rho),e_{2n-z-2}-e_j)=0$, and hence $Y(s_{e_1-e_j}(\lambda+\rho))=0$. On the other hand, it is easy to check that $s_{e_1+e_{2n-z-2}}(\lambda+\rho)$ is $\Phi_\mathfrak{l}$-regular integral, and hence $Y(s_{e_1+e_{2n-z-2}}(\lambda+\rho))\neq0$ by Proposition 2.2(1). Now\[\displaystyle{\sum_{\beta\in S_\lambda}Y(s_\beta(\lambda+\rho))}=Y(s_{e_1+e_{2n-z-2}(\lambda+\rho)})\neq0.\]By Theorem 2.3, $M_\mathfrak{p}^\mathfrak{g}(\lambda)$ is reducible.
\end{proof}
\begin{theorem}
Suppose $\lambda=ae_1$ for some $a\in\mathbb{C}$. Then $M_\mathfrak{p}^\mathfrak{g}(\lambda)$ is reducible if and only if $a\in2-n+\mathbb{Z}_{\geq0}$.
\end{theorem}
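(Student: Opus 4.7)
The plan is to assemble the proof entirely from ingredients already on the table: Lemma 3.8$'$, Proposition 3.9, and Theorem 2.6 (together with the values $A(\lambda_0)=n-1$, $B(\lambda_0)=2n-3$, $C(\lambda_0)=n-2$ stated just before Proposition 3.9). Writing $\lambda=\lambda_0+z\zeta$ in the special line, the substitution $a=z-2n+3$ converts the claim $a\in 2-n+\mathbb{Z}_{\geq0}$ into $z\in n-1+\mathbb{Z}_{\geq0}$, so I only need to identify the integers $z\geq n-1$ as the precise locus of reducibility.

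First I would dispose of the "simple" direction. Lemma 3.8$'$(1) already rules out every $z\notin 1+\mathbb{Z}_{\geq0}$, so only integer $z\geq 1$ can possibly give reducibility. Among these, Theorem 2.6(1) immediately kills $z<A(\lambda_0)=n-1$, leaving only integers $z\geq n-1$ as candidates. This shows reducibility implies $z\in n-1+\mathbb{Z}_{\geq0}$, equivalently $a\in 2-n+\mathbb{Z}_{\geq0}$.

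Next I would prove the reverse implication by covering the range $z\in n-1+\mathbb{Z}_{\geq0}$ in three overlapping pieces. The endpoint $z=n-1$ and the endpoint $z=2n-3$ are handled by Theorem 2.6(2): note that $C(\lambda_0)^{-1}(B(\lambda_0)-A(\lambda_0))=(n-2)/(n-2)=1$, so the allowed values $i=0,1$ give exactly $z=n-1$ and $z=2n-3$, both reducible. The strict interior $n-1<z<2n-3$ with $z\in\mathbb{Z}$ is covered by Proposition 3.9. Finally, $z\in 2n-3+\mathbb{Z}_{\geq 0}$ is covered by Lemma 3.8$'$(2). Together these three cases exhaust $z\in n-1+\mathbb{Z}_{\geq 0}$.

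There is essentially no obstacle here, since the hard analytic input (the Jantzen-type argument in Proposition 3.9 and the EHW data $(A,B,C)$) has already been recorded; the theorem is just the bookkeeping that glues them along the special line. The only point that needs a small check is that $C(\lambda_0)^{-1}(B(\lambda_0)-A(\lambda_0))=1$, which guarantees that Theorem 2.6(2) contributes precisely the two endpoints used above and does not introduce any spurious reducibility outside $n-1+\mathbb{Z}_{\geq0}$.
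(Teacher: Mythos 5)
Your proposal is correct and follows essentially the same route as the paper: Lemma 3.8$'$(1) together with Theorem 2.6(1) for the ``only if'' direction, and Lemma 3.8$'$(2), Proposition 3.9, and Theorem 2.6(2) to cover all integers $z\geq n-1$ for the ``if'' direction, then translating via $a=z-2n+3$. Your explicit check that $C(\lambda_0)^{-1}(B(\lambda_0)-A(\lambda_0))=1$, so that Theorem 2.6(2) supplies exactly the endpoints $z=n-1$ and $z=2n-3$, is a harmless elaboration of what the paper leaves implicit.
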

\begin{proof}
Write $\lambda=\lambda_0+z\zeta$ in the special line, and then Lemma 3.8$'$(1) and Theorem 2.6(1) show that $M_\mathfrak{p}^\mathfrak{g}(\lambda)$ is reducible only if $z\in n-1+\mathbb{Z}_{\geq0}$. On the other hand, Lemma 3.8$'$(2), Proposition 3.9, and Theorem 2.6(2) show that if $z\in n-1+\mathbb{Z}_{\geq0}$, then $M_\mathfrak{p}^\mathfrak{g}(\lambda)$ is reducible. It follows that $M_\mathfrak{p}^\mathfrak{g}(\lambda)$ is reducible if and only if $z\in n-1+\mathbb{Z}_{\geq0}$, which is equivalent to $a\in2-n+\mathbb{Z}_{\geq0}$.
\end{proof}

\subsection{$(SO(2n),U(n))$ for $n\geq2$}

Let $\mathfrak{g}=\mathfrak{so}(2n,\mathbb{C})$ and let $\mathfrak{p}=\mathfrak{l}+\mathfrak{u}_+$ be a parabolic subalgebra of abelian type with $\mathfrak{l}=\mathfrak{gl}(n,\mathbb{C})$. We may choose a Cartan subalgebra $\mathfrak{h}\subseteq \mathfrak{l}$ and a simple root system $\Delta=\{e_i-e_{i+1}\mid1\leq i\leq n-1\}\cup\{e_{n-1}+e_n\}$, such that $\mathfrak{p}$ is standard with respect to $(\mathfrak{h},\Delta)$. Then $\Delta_\mathfrak{l}=\{e_i-e_{i+1}\mid1\leq i\leq n-1\}$ and $\Phi_\mathfrak{u}^+=\{e_i+e_j\mid1\leq i<j\leq n\}$. Moreover, we have $\rho=\displaystyle{\sum_{i=1}^n(n-i)e_i}$.

If $M_\mathfrak{p}^\mathfrak{g}(\lambda)$ is of scalar type, an easy computation shows that $\lambda=\displaystyle{a\sum_{i=1}^ne_i}$ for some $a\in\mathbb{C}$.
\begin{lemma}
\label{e}
Suppose $\lambda=\displaystyle{a\sum_{i=1}^ne_i}$ for some $a\in\mathbb{C}$.
\begin{enumerate}[(1)]
\item If $a\notin2-n+\frac{1}{2}\mathbb{Z}_{\geq0}$, then $M_\mathfrak{p}^\mathfrak{g}(\lambda)$ is simple.
\item If $a\in\frac{1}{2}\mathbb{Z}_{\geq0}$, then $M_\mathfrak{p}^\mathfrak{g}(\lambda)$ is reducible.
\end{enumerate}
\end{lemma}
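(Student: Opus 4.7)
The plan is to apply Theorem 2.1 directly, in precisely the same spirit as Lemmas 3.1, 3.3, 3.5, and 3.8. With $\lambda=a\sum_{i=1}^n e_i$ and $\rho=\sum_{i=1}^n(n-i)e_i$, one has $\lambda+\rho=\sum_{i=1}^n(a+n-i)e_i$. First I verify that $\lambda+\rho$ is $\Phi_\mathfrak{l}^+$-dominant integral by noting $\langle\lambda+\rho,e_i-e_{i+1}\rangle=1$ for every $e_i-e_{i+1}\in\Delta_\mathfrak{l}$, and record the stronger observation $\langle\lambda+\rho,e_i-e_j\rangle=j-i\neq0$ whenever $i\neq j$, which will supply part of the regularity check in (2). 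Then, for the nilpotent roots $\beta=e_i+e_j\in\Phi_\mathfrak{u}^+$ with $1\leq i<j\leq n$, I compute $\langle\lambda+\rho,\beta\rangle=2a+2n-i-j$. Setting $k=2n-i-j$, as $(i,j)$ ranges over the admissible pairs $k$ takes precisely the values $1,2,\ldots,2n-3$, so the collection of pairings on $\Phi_\mathfrak{u}^+$ is $\{2a+k\mid 1\leq k\leq 2n-3\}$.

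For part (1) I argue by contrapositive. If some $2a+k$ in this collection lies in $\mathbb{Z}_{>0}$, then $2a\in\mathbb{Z}$ and the largest value $2a+(2n-3)\geq1$, i.e., $2a\geq 4-2n$. Taken together, these conditions are equivalent to $a\in 2-n+\frac{1}{2}\mathbb{Z}_{\geq0}$. Hence if $a\notin 2-n+\frac{1}{2}\mathbb{Z}_{\geq0}$, then no $\beta\in\Phi_\mathfrak{u}^+$ satisfies $\langle\lambda+\rho,\beta\rangle\in\mathbb{Z}_{>0}$, and Theorem 2.1 yields simplicity.

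For part (2), suppose $a\in\frac{1}{2}\mathbb{Z}_{\geq0}$. Then the root $\beta=e_{n-1}+e_n$ (corresponding to $k=1$) satisfies $\langle\lambda+\rho,\beta\rangle=2a+1\in\mathbb{Z}_{>0}$. To invoke the converse direction of Theorem 2.1 I must also verify that $\lambda+\rho$ is regular; the pairings with $\pm(e_i-e_j)$ are $\pm(j-i)\neq0$ by the setup above, and the pairings with $\pm(e_i+e_j)$ are $\pm(2a+2n-i-j)$, which under $a\geq0$ satisfy $2a+2n-i-j\geq 2a+1\geq 1>0$. Thus $\lambda+\rho$ is regular, and the converse of Theorem 2.1 forces $M_\mathfrak{p}^\mathfrak{g}(\lambda)$ to be reducible. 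I do not anticipate any serious obstacle here: the argument is routine bookkeeping mirroring the earlier subsections, and the only nuance is the explicit regularity check required to apply the converse half of Theorem 2.1.
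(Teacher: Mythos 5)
Your proposal is correct and follows essentially the same route as the paper: a direct application of Theorem 2.1 using the computation $\langle\lambda+\rho,e_i+e_j\rangle=2a+2n-i-j$, with the range $1\leq 2n-i-j\leq 2n-3$ giving part (1) and the root $e_{n-1}+e_n$ giving part (2). The explicit regularity check you include for the converse direction is left implicit in the paper but is accurate and harmless.
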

\begin{proof}
It is obvious that $\lambda+\rho$ is always $\Phi_\mathfrak{l}^+$-dominant integral. For $1\leq i<j\leq n$, one computes that $\langle\lambda+\rho,e_i+e_j\rangle=2a+2n-i-j$. Now the conclusion follows from Theorem 2.1 immediately.
\end{proof}
The maximal root $\gamma$ in $\Phi^+$ is $e_1+e_2$, and it follows that $\zeta=\displaystyle{\frac{1}{2}\sum_{i=1}^ne_i}$. If we write $\lambda=\lambda_0+z\zeta$ in the special line, then we obtain $\lambda_0=\displaystyle{(\frac{3}{2}-n)\sum_{i=1}^ne_i}$ and $z=2a+2n-3$. Now we may restate Lemma 3.11.
\begin{replemma}{e}
Suppose $\lambda=\displaystyle{a\sum_{i=1}^ne_i}$ for some $a\in\mathbb{C}$. Write $\lambda=\lambda_0+z\zeta$ in the special line.
\begin{enumerate}[(1)]
\item If $z\notin1+\mathbb{Z}_{\geq0}$, then $M_\mathfrak{p}^\mathfrak{g}(\lambda)$ is simple.
\item If $z\in2n-3+\mathbb{Z}_{\geq0}$, then $M_\mathfrak{p}^\mathfrak{g}(\lambda)$ is reducible.
\end{enumerate}
\end{replemma}
\begin{proof}
Because $z=2a+2n-3$, the conclusion follows from Lemma 3.11 immediately.
\end{proof}
By Theorem 2.3, Lemma 9.3, and Theorem 9.4 in [\textbf{EHW}], one may check that $B(\lambda_0)=2n-3$ and $C(\lambda_0)=2$ in this case. The value of $A(\lambda_0)$ depends on the parity of $n$. If $n$ is even, and $A(\lambda_0)=n-1$; and if $n$ is odd, $A(\lambda_0)=n$.
\begin{proposition}
Suppose $\lambda=\displaystyle{a\sum_{i=1}^ne_i}$ for some $a\in\mathbb{C}$. Write $\lambda=\lambda_0+z\zeta$ in the special line. If $z\in\mathbb{Z}$ and $A(\lambda_0)<z<B(\lambda_0)$, then $M_\mathfrak{p}^\mathfrak{g}(\lambda)$ is reducible.
\end{proposition}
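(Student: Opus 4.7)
The plan is to mirror the approach of Proposition 3.9, applying Jantzen's criterion via Proposition 2.5. Using $a=(z-2n+3)/2$, the computation from the proof of Lemma 3.11 yields $\langle\lambda+\rho,e_i+e_j\rangle=z+3-i-j$, so setting $m:=z+3$ one has $S_\lambda=\{e_i+e_j\mid 1\leq i<j\leq n,\ i+j<m\}$. The hypothesis $A(\lambda_0)<z$ forces $m\geq n+3$ in either parity of $n$, so in particular $\beta_0:=e_1+e_2$ lies in $S_\lambda$.

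The strategy is to take $\beta_0=e_1+e_2$ and verify both conditions of Proposition 2.5. For condition (a), a direct computation gives that $s_{\beta_0}(\lambda+\rho)$ has coordinates $-(a+n-2),-(a+n-1),a+n-3,a+n-4,\ldots,a$ at positions $1,2,3,\ldots,n$. Any collision would force $-(a+n-1)=a+n-k$ or $-(a+n-2)=a+n-k$ for some $k\in\{3,\ldots,n\}$, equivalently $k\in\{m-1,m-2\}$; but $m\geq n+3$ puts both values outside $\{3,\ldots,n\}$. So $s_{\beta_0}(\lambda+\rho)$ is $\Phi_\mathfrak{l}$-regular integral and Proposition 2.2(1) delivers $Y(s_{\beta_0}(\lambda+\rho))\neq 0$.

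For condition (b), I have to rule out the existence of $\beta\in S_\lambda\setminus\{\beta_0\}$ and $\omega\in W_\mathfrak{l}$ with $s_{\beta_0}(\lambda+\rho)=\omega s_\beta(\lambda+\rho)$. Since $W_\mathfrak{l}$ acts by arbitrary permutations of the coordinates $e_1,\ldots,e_n$, this reduces to comparing multisets of coordinate values. A $\Phi_\mathfrak{l}$-singular $s_\beta(\lambda+\rho)$ has a repeated coordinate and thus cannot match the regular $s_{\beta_0}(\lambda+\rho)$. For a $\Phi_\mathfrak{l}$-regular $\beta=e_i+e_j$, write $v_k:=a+n-k$; the identity $2(a+n)=m$ gives $-v_k=v_{m-k}$, so $s_\beta(\lambda+\rho)$ has multiset $\{v_k\mid k\in\{1,\ldots,n\}\setminus\{i,j\}\}\cup\{v_{m-i},v_{m-j}\}$. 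Calling an entry out-of-range when its index exceeds $n$, $s_{\beta_0}(\lambda+\rho)$ contributes exactly two out-of-range entries $v_{m-1},v_{m-2}$. A brief case analysis, separating the generic situation $i,j<m-n$ from the subcase $j=m/2$ (where $v_{m-j}=v_{m/2}$ is in range and only $v_{m-i}$ is out-of-range), shows that matching the out-of-range parts forces $\{i,j\}=\{1,2\}$. Hence condition (b) holds, Proposition 2.5 yields $\sum_{\beta\in S_\lambda}Y(s_\beta(\lambda+\rho))\neq 0$, and Theorem 2.3 gives the reducibility of $M_\mathfrak{p}^\mathfrak{g}(\lambda)$.

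The main obstacle compared with Proposition 3.9 is that here several $\beta\in S_\lambda$ may simultaneously produce $\Phi_\mathfrak{l}$-regular reflections, so one cannot simply argue that all but one summand vanish. Instead one must invoke Proposition 2.5 and verify that the distinct regular reflections sit in distinct $W_\mathfrak{l}$-orbits; the bookkeeping in the subcase $j=m/2$, where an extra in-range coordinate $0=v_{m/2}$ appears, is where the orbit comparison is most delicate.
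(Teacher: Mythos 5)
Your proposal is correct, and it is built on the same engine as the paper's proof --- Jantzen's criterion together with the nonvanishing test (which in this paper is Proposition 2.4, not ``Proposition 2.5''), applied to the distinguished root $\beta_0=e_1+e_2$, with the key observation that $W_\mathfrak{l}\cong S_n$ only permutes coordinates, so $W_\mathfrak{l}$-coincidences reduce to equality of coordinate multisets. The structural difference is in scope: the paper first disposes of the points $z=A(\lambda_0)+2k$ by citing the reducibility part of the EHW machinery (Theorem 2.6(2), via Proposition 3.1(b), Lemma 9.3 and Theorem 9.4 of [\textbf{EHW}]) and then runs the Jantzen computation only at the intermediate values $z=A(\lambda_0)+2k+1$, separately for $n$ even and $n$ odd; at those values $m=z+3$ is odd, so no zero coordinate ever appears and the paper can finish with the magnitude comparison $c_1>c_2>|c_m|$. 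You instead treat every integer $z$ with $A(\lambda_0)<z<B(\lambda_0)$ uniformly by Jantzen, using the index identity $-v_k=v_{m-k}$ and the bound $m\geq n+3$ to get regularity of $s_{\beta_0}(\lambda+\rho)$ and to pin down $\{i,j\}=\{1,2\}$ (since the index multiset of $s_{\beta_0}(\lambda+\rho)$ omits $1$ and $2$ while $m-i,m-j\geq 3$); this buys you a proof with no parity split on $n$ and no appeal to the unitarity-based reducibility of [\textbf{EHW}] inside this proposition, at the modest cost of the extra subcase $j=m/2$ (even $m$), which you handle correctly --- there the reflected weight has only one out-of-range entry, so it cannot match the two out-of-range entries $v_{m-1},v_{m-2}$ of $s_{\beta_0}(\lambda+\rho)$. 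Your bookkeeping via indices is equivalent in spirit to the paper's comparison of coefficient sizes, just organized differently.
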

\begin{proof}
Let us assume that $n$ is even, and then $A(\lambda_0)=n-1$. Therefore Proposition 3.1(b), Lemma 9.3, and Theorem 9.4 in [\textbf{EHW}] imply that if $z=A(\lambda_0)+2k$ for $k\in\mathbb{Z}_{\geq0}$ and $z\leq B(\lambda_0)$, then $M_\mathfrak{p}^\mathfrak{g}(\lambda)$ is reducible. Thus we only need to check $z=A(\lambda_0)+2k+1=n+2k$ for $k\in\mathbb{Z}_{\geq0}$ and $0\leq k\leq\frac{n}{2}-2$.

Because $z=2a+2n-3$, in fact we already computed that $\langle\lambda+\rho,e_i+e_j\rangle=n+2k-i-j+3$ for $1\leq i<j\leq n$. If $k\in\mathbb{Z}_{\geq0}$ and $0\leq k\leq\frac{n}{2}-2$, then\[S_\lambda=\{e_i+e_j\mid i+j<n+2k+3\}.\]Now consider $\lambda+\rho=\displaystyle{\sum_{m=1}^n}(\frac{n+3}{2}+k-m)e_m$. If $j\geq2k+3$, since $i+j<n+2k+3$, then $(s_{e_i+e_j}(\lambda+\rho),e_i-e_{n+2k+3-j})=0$. It follows that $s_{e_i+e_j}(\lambda+\rho)$ is $\Phi_\mathfrak{l}$-singular, so $Y(s_{e_i+e_j}(\lambda+\rho))=0$ by Proposition 2.2(1). Hence, we only need to consider $Y(s_{e_i+e_j}(\lambda+\rho))$ for $i<j<2k+3$. It is obvious that $s_{e_1+e_2}(\lambda+\rho)$ is $\Phi_\mathfrak{l}$-regular integral, and then $Y(s_{e_1+e_2}(\lambda+\rho))\neq0$ by Proposition 2.2(1). For $i<j<2k+3$ and $e_i+e_j\neq e_1+e_2$, we claim that if there exist $e_i+e_j\in S_\lambda$ and $\omega\in W_\mathfrak{l}$ such that $s_{e_1+e_2}(\lambda+\rho)=\omega s_{e_i+e_j}(\lambda+\rho)$, then $i=1$ and $j=2$, so $\displaystyle{\sum_{\beta\in S_\lambda}Y(s_\beta(\lambda+\rho))}\neq0$ by Proposition 2.4, and hence the conclusion holds by Theorem 2.3. To show this claim, let $c_m:=\frac{n+3}{2}+k-m$ denote the coefficient of $e_m$ in $\lambda+\rho$, i.e., let $\lambda+\rho=\displaystyle{\sum_{m=1}^nc_me_m}$. It is easy to see that $c_1>c_2>|c_m|$ for $3\leq m\leq n$, and $s_{e_1+e_2}(\lambda+\rho)=-c_2e_1-c_1e_2+\displaystyle{\sum_{m=3}^nc_me_m}$. Because $W_\mathfrak{l}\cong S_n$ is a symmetric group, each $\omega\in W_\mathfrak{l}$ is a permutation of the basis vectors $e_m$ for $1\leq m\leq n$. It follows that $i=1$ and $j=2$ if $s_{e_1+e_2}(\lambda+\rho)=\omega s_{e_i+e_j}(\lambda+\rho)$ for some $\omega\in W_\mathfrak{l}$, since $-c_1$ and $-c_2$ must appear as coefficients of two basis vectors in $s_{e_i+e_j}(\lambda+\rho)$ and $-c_1<-c_2<-|c_m|$ for $3\leq m\leq n$. The claim holds, and the conclusion for $n$ even is proved.

The proof for $n$ odd is parallel. If $n$ is odd, then $A(\lambda_0)=n$, and we need to check $z=n+2k+1$ for $k\in\mathbb{Z}_{\geq0}$ and $0\leq k\leq\frac{n-5}{2}$. It is computed that\[S_\lambda=\{e_i+e_j\mid i+j<n+2k+4\}.\]Finally, verify similarly to the above analysis that $Y(s_{e_1+e_2}(\lambda+\rho))\neq0$ cannot be cancelled out in $\displaystyle{\sum_{\beta\in S_\lambda}Y(s_\beta(\lambda+\rho))}$, and apply Theorem 2.3 to conclude the proof.
\end{proof}
\begin{theorem}
Suppose $\lambda=\displaystyle{a\sum_{i=1}^ne_i}$ for some $a\in\mathbb{C}$. Then $M_\mathfrak{p}^\mathfrak{g}(\lambda)$ is reducible if and only if \[a\in\begin{cases}1-\frac{n}{2}+\frac{1}{2}\mathbb{Z}_{\geq0},&n\textrm{ even},\\\frac{3-n}{2}+\frac{1}{2}\mathbb{Z}_{\geq0},&n\textrm{ odd}.\end{cases}\]
\end{theorem}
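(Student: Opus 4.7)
The plan is to rewrite the asserted condition in terms of the special-line parameter $z$ using the relation $z=2a+2n-3$ established before Lemma 3.11$'$. A direct check shows that for $n$ even the set $1-\frac{n}{2}+\frac{1}{2}\mathbb{Z}_{\geq0}$ corresponds to $z\in n-1+\mathbb{Z}_{\geq0}=A(\lambda_0)+\mathbb{Z}_{\geq0}$, and for $n$ odd the set $\frac{3-n}{2}+\frac{1}{2}\mathbb{Z}_{\geq0}$ corresponds to $z\in n+\mathbb{Z}_{\geq0}=A(\lambda_0)+\mathbb{Z}_{\geq0}$. Thus the theorem reduces to proving, in both parities, that $M_\mathfrak{p}^\mathfrak{g}(\lambda)$ is reducible if and only if $z\in A(\lambda_0)+\mathbb{Z}_{\geq0}$.

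For the necessity direction, I would combine two already-proved constraints. Lemma 3.11$'$(1) forces $z\in 1+\mathbb{Z}_{\geq0}$, while Theorem 2.6(1) forces $z\geq A(\lambda_0)$. Since $A(\lambda_0)\in\{n-1,n\}\subseteq\mathbb{Z}_{\geq 1}$, the intersection of these two conditions is precisely $z\in A(\lambda_0)+\mathbb{Z}_{\geq0}$.

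For the sufficiency direction, I would partition the set $A(\lambda_0)+\mathbb{Z}_{\geq0}$ into three disjoint families of integers and treat each with a previously established tool. First, for $z\geq B(\lambda_0)=2n-3$, reducibility is given directly by Lemma 3.11$'$(2). Second, for integers $z$ with $A(\lambda_0)\leq z\leq B(\lambda_0)$ and $z\equiv A(\lambda_0)\pmod 2$, that is, $z=A(\lambda_0)+2k$ with $0\leq k\leq \tfrac12(B(\lambda_0)-A(\lambda_0))$, reducibility follows from Theorem 2.6(2) applied with $C(\lambda_0)=2$. Third, for integers $z$ with $A(\lambda_0)<z<B(\lambda_0)$ and $z\not\equiv A(\lambda_0)\pmod 2$ (the "gap" points not covered by the EHW list), reducibility is exactly the content of Proposition 3.12. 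These three families partition $A(\lambda_0)+\mathbb{Z}_{\geq0}$, so every such $z$ gives a reducible module.

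The substantive obstacle has already been removed by Proposition 3.12, whose Jantzen-type computation shows that $Y(s_{e_1+e_2}(\lambda+\rho))$ cannot be cancelled in the sum of Theorem 2.3 for the "in-between" integers. Once that proposition is in hand, the present proof is purely a bookkeeping argument: translating the $a$-condition to a $z$-condition, quoting Lemma 3.11$'$, Theorem 2.6, and Proposition 3.12 on the appropriate subranges, and verifying that together they cover exactly the set $A(\lambda_0)+\mathbb{Z}_{\geq0}$ in each parity. The only mild subtlety is to remember to handle the $n$ even and $n$ odd cases in parallel, since $A(\lambda_0)$ differs by $1$ between them while $B(\lambda_0)$ and $C(\lambda_0)$ do not; the final conversion back to the $a$-set then produces the two displayed cases of the theorem.
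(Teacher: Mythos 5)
Your proposal is correct and follows essentially the same route as the paper: necessity from Lemma 3.11$'$(1) together with Theorem 2.6(1), and sufficiency by covering $A(\lambda_0)+\mathbb{Z}_{\geq0}$ with Lemma 3.11$'$(2), Theorem 2.6(2), and Proposition 3.12, then translating between $a$ and $z$ via $z=2a+2n-3$ in each parity. You merely spell out the bookkeeping (the parity of $z-A(\lambda_0)$ and the minor overlap at $z=B(\lambda_0)$) more explicitly than the paper does.
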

\begin{proof}
Assume that $n$ is even. Write $\lambda=\lambda_0+z\zeta$ in the special line, and then Lemma 3.11$'$(1) and Theorem 2.6(1) show that $M_\mathfrak{p}^\mathfrak{g}(\lambda)$ is reducible only if $z\in n-1+\mathbb{Z}_{\geq0}$. On the other hand, Lemma 3.11$'$(2), Proposition 3.12, and Theorem 2.6(2) show that if $z\in n-1+\mathbb{Z}_{\geq0}$, then $M_\mathfrak{p}^\mathfrak{g}(\lambda)$ is reducible. It follows that $M_\mathfrak{p}^\mathfrak{g}(\lambda)$ is reducible if and only if $z\in n-1+\mathbb{Z}_{\geq0}$, which is equivalent to $a\in1-\frac{n}{2}+\frac{1}{2}\mathbb{Z}_{\geq0}$. The proof for $n$ odd is parallel.
\end{proof}

\subsection{$(E_{6(-78)},SO(2)\times SO(10))$}

Let $\mathfrak{g}=\mathfrak{e}_6$ and let $\mathfrak{p}=\mathfrak{l}+\mathfrak{u}_+$ be a parabolic subalgebra of abelian type with $\mathfrak{l}=\mathfrak{so}(2,\mathbb{C})\oplus\mathfrak{so}(10,\mathbb{C})$. We may choose a Cartan subalgebra $\mathfrak{h}\subseteq \mathfrak{l}$ and a simple root system $\Delta=\{\alpha_i\mid1\leq i\leq6\}$ given by the Dynkin diagram of Figure 1 such that $\mathfrak{p}$ is standard with respect to $(\mathfrak{h},\Delta)$. Then $\Delta_\mathfrak{l}=\{\alpha_i\mid2\leq i\leq6\}$.
\begin{figure}[H]
\centering \scalebox{0.7}{\includegraphics{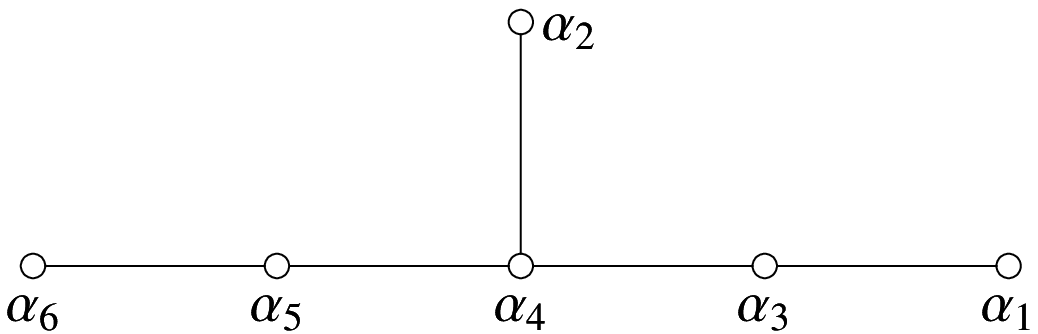}}
\caption{Dynkin diagram of $\mathfrak{e}_6$.}
\end{figure}
Embed $\mathfrak{h}_\mathbb{R}^*$, the $\mathbb{R}$-span of the simple roots, into the subspace $V_6:=\{v\in\mathbb{R}^8\mid(v,e_6-e_7)=(v,e_7+e_8)=0\}$ of $\mathbb{R}^8$, and let $\alpha_1=\frac{1}{2}(e_1-e_2-e_3-e_4-e_5-e_6-e_7+e_8)$, $\alpha_2=e_1+e_2$, $\alpha_i=e_{i-1}-e_{i-2}$ for $3\leq i\leq6$. Now given $\mathrm{v}\in\mathbb{Z}^5$, define
\begin{equation}
\label{i}
\alpha_\pm(\mathrm{v}):=\frac{1}{2}(\displaystyle{\sum_{i=1}^5}(-1)^{v(i)}e_i\pm e_6-e_7+e_8).
\end{equation}
Then\[\Phi_\mathfrak{l}^+=\{e_j\pm e_i\mid1\leq i<j\leq5\}\]and\[\Phi_\mathfrak{u}^+=\{\alpha_-(\mathrm{v})\mid\displaystyle{\sum_{i=1}^5}v(i)\textrm{ even}\}.\]Moreover, we have $\rho=e_2+2e_3+3e_4+4e_5-4e_6-4e_7+4e_8$.

If $M_\mathfrak{p}^\mathfrak{g}(\lambda)$ is of scalar type, an easy computation shows that $\lambda=-ae_6-ae_7+ae_8$ for some $a\in\mathbb{C}$.
\begin{lemma}
\label{f}
Suppose $\lambda=-ae_6-ae_7+ae_8$ for some $a\in\mathbb{C}$.
\begin{enumerate}[(1)]
\item If $a\notin-\frac{20}{3}+\frac{2}{3}\mathbb{Z}_{\geq0}$, then $M_\mathfrak{p}^\mathfrak{g}(\lambda)$ is simple.
\item If $a\in\frac{2}{3}\mathbb{Z}_{\geq0}$, then $M_\mathfrak{p}^\mathfrak{g}(\lambda)$ is reducible.
\end{enumerate}
\end{lemma}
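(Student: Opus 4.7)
The plan is to follow the template set by Lemmas 3.1, 3.3, 3.5, 3.8, and 3.11: verify the hypotheses of Theorem 2.1 by an explicit computation of $\langle \lambda + \rho, \beta \rangle$ for every $\beta \in \Phi_\mathfrak{u}^+$, and then read off (1) and (2). Using $\rho = e_2 + 2e_3 + 3e_4 + 4e_5 - 4e_6 - 4e_7 + 4e_8$, I first note that $\lambda + \rho$ has coefficients $0, 1, 2, 3, 4$ on $e_1, \ldots, e_5$, so it is automatically both $\Phi_\mathfrak{l}^+$-dominant integral and $\Phi_\mathfrak{l}$-regular. All roots of $E_6$ have squared length $2$, so $\langle \cdot, \beta \rangle = (\cdot, \beta)$ for every $\beta \in \Phi_\mathfrak{u}^+$.

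The key calculation, using the definition (\ref{i}) of $\alpha_-(\mathrm{v})$, yields
\[
\langle \lambda + \rho, \alpha_-(\mathrm{v}) \rangle = \frac{3a + 12 + N(\mathrm{v})}{2}, \qquad N(\mathrm{v}) := (-1)^{v(2)} + 2(-1)^{v(3)} + 3(-1)^{v(4)} + 4(-1)^{v(5)}.
\]
The parity constraint $\sum_i v(i) \in 2\mathbb{Z}$ is absorbed by the choice of $v(1)$, so $(v(2), v(3), v(4), v(5))$ ranges freely over $\{0,1\}^4$, and a short enumeration shows that $N(\mathrm{v})$ attains every even integer in $\{-10, -8, \ldots, 8, 10\}$. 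Hence the set $\{\langle \lambda + \rho, \beta \rangle : \beta \in \Phi_\mathfrak{u}^+\}$ is exactly $\{\tfrac{3a}{2} + m : m = 1, 2, \ldots, 11\}$.

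For part (1), the hypothesis $a \notin -\tfrac{20}{3} + \tfrac{2}{3}\mathbb{Z}_{\geq 0}$ splits into two cases: either $3a \notin 2\mathbb{Z}$, in which case no value $\tfrac{3a}{2} + m$ is even an integer; or $3a \in 2\mathbb{Z}$ with $\tfrac{3a}{2} \leq -11$, in which case every such value is $\leq 0$. In either case no $\langle \lambda + \rho, \beta \rangle$ lies in $\mathbb{Z}_{>0}$, so simplicity follows from the direct direction of Theorem 2.1. For part (2), when $a \in \tfrac{2}{3}\mathbb{Z}_{\geq 0}$, choosing $\mathrm{v}$ with $N(\mathrm{v}) = 10$ produces $\langle \lambda + \rho, \beta \rangle \geq 11 \in \mathbb{Z}_{>0}$; moreover $a \geq 0$ forces every value $\tfrac{3a}{2} + m \geq 1$, so $\lambda + \rho$ is $\Phi_\mathfrak{u}$-regular, and combined with the $\Phi_\mathfrak{l}$-regularity already observed it is regular. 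The converse direction of Theorem 2.1 then delivers reducibility.

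The only real obstacle is bookkeeping: confirming that the parity constraint on $\mathrm{v}$ imposes no restriction on $N(\mathrm{v})$, and that the signed sums $\pm 1 \pm 2 \pm 3 \pm 4$ genuinely attain every even integer in $[-10, 10]$ rather than missing any. Once that is in hand, the entire argument is a mechanical application of Theorem 2.1, in the same style as the earlier classical-type cases.
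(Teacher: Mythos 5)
Your proposal is correct and follows the same route as the paper: compute $\langle\lambda+\rho,\alpha_-(\mathrm{v})\rangle=\tfrac{1}{2}\bigl(3a+(-1)^{v(2)}+2(-1)^{v(3)}+3(-1)^{v(4)}+4(-1)^{v(5)}\bigr)+6$ over $\Phi_\mathfrak{u}^+$ and apply both directions of Theorem 2.1, with regularity of $\lambda+\rho$ checked for part (2). Your version simply spells out the enumeration of the signed sums and the regularity argument that the paper leaves as ``follows immediately.''
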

\begin{proof}
It is obvious that $\lambda+\rho$ is always $\Phi_\mathfrak{l}^+$-dominant integral. For $\alpha_-(\mathrm{v})\in\Phi_\mathfrak{u}^+$, one computes that $\langle\lambda+\rho,\alpha_-(\mathrm{v})\rangle=\frac{1}{2}(3a+(-1)^{v(2)}+2(-1)^{v(3)}+3(-1)^{v(4)}+4(-1)^{v(5)})+6$. Now the conclusion follows from Theorem 2.1 immediately.
\end{proof}
The maximal root $\gamma$ in $\Phi^+$ is $\frac{1}{2}(e_1+e_2+e_3+e_4+e_5-e_6-e_7+e_8)$, and it follows that $\zeta=\frac{2}{3}(-e_6-e_7+e_8)$. If we write $\lambda=\lambda_0+z\zeta$ in the special line, then we obtain $\lambda_0=\frac{22}{3}e_6+\frac{22}{3}e_7-\frac{22}{3}e_8$ and $a=\frac{2z-22}{3}$. Now we may restate Lemma 3.14.
\begin{replemma}{f}
Suppose $\lambda=-ae_6-ae_7+ae_8$ for some $a\in\mathbb{C}$. Write $\lambda=\lambda_0+z\zeta$ in the special line.
\begin{enumerate}[(1)]
\item If $z\notin1+\mathbb{Z}_{\geq0}$, then $M_\mathfrak{p}^\mathfrak{g}(\lambda)$ is simple.
\item If $z\in11+\mathbb{Z}_{\geq0}$, then $M_\mathfrak{p}^\mathfrak{g}(\lambda)$ is reducible.
\end{enumerate}
\end{replemma}
\begin{proof}
Because $a=\frac{2z-22}{3}$, the conclusion follows from Lemma 3.14 immediately.
\end{proof}
By Theorem 2.3, Lemma 12.3, and Theorem 12.4 in [\textbf{EHW}], one may check that $A(\lambda_0)=8$, $B(\lambda_0)=11$, and $C(\lambda_0)=3$ in this case.
\begin{proposition}
Suppose $\lambda=-ae_6-ae_7+ae_8$ for some $a\in\mathbb{C}$. Write $\lambda=\lambda_0+z\zeta$ in the special line. If $z=9$, then $M_\mathfrak{p}^\mathfrak{g}(\lambda)$ is reducible.
\end{proposition}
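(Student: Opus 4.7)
The plan is to apply Jantzen's criterion (Theorem 2.3) through Proposition 2.4 by exhibiting a single distinguished root $\beta_0 \in S_\lambda$ satisfying conditions (a) and (b). For $z = 9$ the relation $a = \tfrac{2z-22}{3} = -\tfrac{4}{3}$ gives $\lambda + \rho = e_2 + 2e_3 + 3e_4 + 4e_5 - \tfrac{8}{3}e_6 - \tfrac{8}{3}e_7 + \tfrac{8}{3}e_8$. The natural candidate is the maximal root itself, $\beta_0 := \gamma = \tfrac{1}{2}(e_1 + e_2 + e_3 + e_4 + e_5 - e_6 - e_7 + e_8)$, i.e., $\alpha_-(\mathrm{v})$ with all $(-1)^{v(i)} = 1$; the formula in Lemma 3.14 specializes to $\langle \lambda + \rho, \beta_0 \rangle = \tfrac{1}{2}(-4 + 1 + 2 + 3 + 4) + 6 = 9$, so $\beta_0 \in S_\lambda$.

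For condition (a), I plan to compute $s_{\beta_0}(\lambda + \rho) = (\lambda + \rho) - 9\beta_0$ directly. The coefficients of $e_1, \ldots, e_5$ come out to $-\tfrac{9}{2}, -\tfrac{7}{2}, -\tfrac{5}{2}, -\tfrac{3}{2}, -\tfrac{1}{2}$, which are pairwise distinct in absolute value. Pairing with any $e_j \pm e_i \in \Phi_\mathfrak{l}$ ($1 \leq i < j \leq 5$) then produces a nonzero integer, so $s_{\beta_0}(\lambda + \rho)$ is $\Phi_\mathfrak{l}$-regular integral, and Proposition 2.2(1) gives $Y(s_{\beta_0}(\lambda + \rho)) \neq 0$.

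For condition (b), the decisive observation is that $W_\mathfrak{l}$, the Weyl group of $\mathfrak{so}(10,\mathbb{C})$, acts on $\mathrm{span}\{e_1, \ldots, e_5\}$ and fixes $e_6, e_7, e_8$ pointwise. If $s_{\beta_0}(\lambda + \rho) = \omega\, s_\beta(\lambda + \rho)$ for some $\beta \in S_\lambda$ and $\omega \in W_\mathfrak{l}$, then the $(e_6, e_7, e_8)$-coefficients of the two sides must agree. Every root in $\Phi_\mathfrak{u}^+$ shares the same $(e_6, e_7, e_8)$-part $(-\tfrac{1}{2}, -\tfrac{1}{2}, \tfrac{1}{2})$, so these coefficients in $s_\beta(\lambda + \rho)$ depend only on $\langle \lambda + \rho, \beta \rangle$; matching $\beta_0$ forces $\langle \lambda + \rho, \beta \rangle = 9$. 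The formula in Lemma 3.14 then requires $(-1)^{v(2)} + 2(-1)^{v(3)} + 3(-1)^{v(4)} + 4(-1)^{v(5)} = 10$, which forces $(-1)^{v(i)} = 1$ for $i = 2, 3, 4, 5$, and the parity condition on $\sum_i v(i)$ in turn forces $(-1)^{v(1)} = 1$. Hence $\beta = \beta_0$, so (b) is satisfied vacuously, and Proposition 2.4 combined with Theorem 2.3 yields the reducibility of $M_\mathfrak{p}^\mathfrak{g}(\lambda)$. The one step that requires care is this uniqueness of $\beta_0$ within $S_\lambda$, but it reduces to the observation that $\beta_0$ is the maximizer of $\langle \lambda + \rho, \cdot \rangle$ over $\Phi_\mathfrak{u}^+$, so the rest is a short explicit computation.
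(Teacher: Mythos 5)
Your argument is correct, and it verifies the two hypotheses of Proposition 2.4 for the same distinguished root $\beta_0$ (the pattern ``$+++++$'', i.e.\ the maximal root $\gamma$) that the paper uses; the difference lies in how you rule out cancellation, i.e.\ condition (b). The paper computes $s_\beta(\lambda+\rho)$ for \emph{every} $\beta\in S_\lambda$ (Table 1), isolates the $\Phi_\mathfrak{l}$-regular ones, and then argues via $W_\mathfrak{l}$-invariants of the coordinate vector on $e_1,\dots,e_5$ (parity of the number of positive coefficients, number of zero coefficients) that none of the other regular elements can be $W_\mathfrak{l}$-conjugate to $s_{\beta_0}(\lambda+\rho)$. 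You instead exploit the fact that $W_\mathfrak{l}$ fixes $e_6,e_7,e_8$ pointwise, so that $W_\mathfrak{l}$-conjugacy forces the $(e_6,e_7,e_8)$-components of $s_\beta(\lambda+\rho)$ to match; since all roots of $\Phi_\mathfrak{u}^+$ share the same $(e_6,e_7,e_8)$-part, this pins down $\langle\lambda+\rho,\beta\rangle=9$, and the unique maximizer property of $\gamma$ then forces $\beta=\beta_0$. This is essentially the mechanism the paper formalizes only later, as Lemma 3.19 (pairing with the fundamental weight $\theta_u$), for the harder $E_7$ case; transporting it to $E_6$ lets you bypass Table 1 entirely (apart from the one-line computation of $s_{\beta_0}(\lambda+\rho)$ needed for condition (a)), and it even yields the stronger statement that no $\omega\in W_\mathfrak{l}$ of any length can produce a coincidence, whereas Proposition 2.4 only requires excluding odd-length $\omega$. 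The paper's table-based route is more computational but is what it reuses for $z=10$, where it happens to pick a non-maximal root as $\beta_0$; your streamlined argument is perfectly adequate for the statement at hand.
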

\begin{proof}
If $z=9$, then $a=-\frac{4}{3}$. For $\alpha_-(\mathrm{v})\in\Phi_\mathfrak{u}^+$, we have\[\langle\lambda+\rho,\alpha_-(\mathrm{v})\rangle=\frac{1}{2}((-1)^{v(2)}+2(-1)^{v(3)}+3(-1)^{v(4)}+4(-1)^{v(5)})+4.\] Then\[S_\lambda=\{\alpha_-(\mathrm{v})\in\Phi_\mathfrak{u}^+\mid\textrm{at least one of }v(3),v(4),\textrm{ and }v(5)\textrm{ is even}\}.\]We need to compute $s_\beta(\lambda+\rho)$. Here $\lambda+\rho=e_2+2e_3+3e_4+4e_5+(-a-4)e_6+(-a-4)e_7+(a+4)e_8$. We do not need to work out all the coefficients, but only the coefficients of $e_i$ for $1\leq i\leq5$. Moreover, we use five ``$+$'' and ``$-$'' symbols to indicate the parity of $v(i)$ for $1\leq i\leq5$ in $\beta$, where ``$+$'' corresponds to $v(i)$ even and ``$-$'' corresponds to $v(i)$ odd. For example, ``$++--+$'' represents $\frac{1}{2}(e_1+e_2-e_3-e_4+e_5-e_6-e_7+e_8)$, and ``$-----$'' represents $\frac{1}{2}(-e_1-e_2-e_3-e_4-e_5-e_6-e_7+e_8)$.

According to Table 1, it is immediate that $s_\beta(\lambda+\rho)$ is $\Phi_\mathfrak{l}$-regular integral precisely when $\beta$ is represented by ``$+++++$'', ``$--+++$'', ``$+--++$'', ``$+-+-+$'' or ``$+-++-$''. Because the elements in $W_\mathfrak{l}$ do not change the parity of the number of positive coefficients and do not change the number of zeros, it follows that there do not exist $\omega\in W_\mathfrak{l}$ and $\beta$ not represented by ``$+++++$'' such that $s_\beta(\lambda+\rho)=\omega s_{\beta_0}(\lambda+\rho)$ for $\beta_0$ represented by ``$+++++$''. Therefore, $\displaystyle{\sum_{\beta\in S_\lambda}Y(s_\beta(\lambda+\rho))}\neq0$ by Proposition 2.4, and hence the conclusion holds by Theorem 2.3.
\begin{table}
\begin{center}
\caption{Coefficients of $e_i$ for $1\leq i\leq5$ of $s_\beta(\lambda+\rho)$ for $\beta\in S_\lambda$ with $z=9$}
\begin{tabular}{|c|c|c|c|c|c|}
\hline
$\beta$&$e_1$&$e_2$&$e_3$&$e_4$&$e_5$\\
\hline
$+++++$&$-\frac{9}{2}$&$-\frac{7}{2}$&$-\frac{5}{2}$&$-\frac{3}{2}$&$-\frac{1}{2}$\\
\hline
$--+++$&$4$&$5$&$-2$&$-1$&$0$\\
\hline
$-+-++$&$\frac{7}{2}$&$-\frac{5}{2}$&$\frac{11}{2}$&$-\frac{1}{2}$&$\frac{1}{2}$\\
\hline
$-++-+$&$3$&$-2$&$-1$&$6$&$1$\\
\hline
$-+++-$&$\frac{5}{2}$&$-\frac{3}{2}$&$-\frac{1}{2}$&$\frac{1}{2}$&$\frac{13}{2}$\\
\hline
$+--++$&$-3$&$4$&$5$&$0$&$1$\\
\hline
$+-+-+$&$-\frac{5}{2}$&$\frac{7}{2}$&$-\frac{1}{2}$&$\frac{11}{2}$&$\frac{3}{2}$\\
\hline
$+-++-$&$-2$&$3$&$0$&$1$&$6$\\
\hline
$++--+$&$-2$&$-1$&$4$&$5$&$2$\\
\hline
$++-+-$&$-\frac{3}{2}$&$-\frac{1}{2}$&$\frac{7}{2}$&$\frac{3}{2}$&$\frac{11}{2}$\\
\hline
$+++--$&$-1$&$0$&$1$&$4$&$5$\\
\hline
$----+$&$\frac{3}{2}$&$\frac{5}{2}$&$\frac{7}{2}$&$\frac{9}{2}$&$\frac{5}{2}$\\
\hline
$---+-$&$1$&$2$&$3$&$2$&$5$\\
\hline
$--+--$&$\frac{1}{2}$&$\frac{3}{2}$&$\frac{3}{2}$&$\frac{7}{2}$&$\frac{9}{2}$\\
\hline
\end{tabular}
\end{center}
\end{table}
\end{proof}
\begin{proposition}
Suppose $\lambda=-ae_6-ae_7+ae_8$ for some $a\in\mathbb{C}$. Write $\lambda=\lambda_0+z\zeta$ in the special line. If $z=10$, then $M_\mathfrak{p}^\mathfrak{g}(\lambda)$ is reducible.
\end{proposition}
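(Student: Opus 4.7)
The plan is to mirror the argument for Proposition 3.15, now with $z=10$, i.e., $a=-\tfrac{2}{3}$. Specializing Lemma 3.14 gives
\[\langle\lambda+\rho,\alpha_-(\mathrm{v})\rangle=\tfrac{1}{2}\bigl((-1)^{v(2)}+2(-1)^{v(3)}+3(-1)^{v(4)}+4(-1)^{v(5)}\bigr)+5,\]
which is an integer in every case (since the full sum $1+2+3+4$ is even) and is strictly positive except when $(-1)^{v(i)}=-1$ for all $i=2,\dots,5$. The evenness constraint $\sum v(i)\in 2\mathbb{Z}$ then forces that lone excluded root to correspond to the pattern ``$+----$'', so $S_\lambda$ consists of the remaining $15$ of the $16$ roots in $\Phi_\mathfrak{u}^+$.

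For each $\beta\in S_\lambda$ I would next build, in a $15$-row analogue of Table 1, the coefficients $(c'_1,\dots,c'_5)$ of $e_1,\dots,e_5$ in $s_\beta(\lambda+\rho)=(\lambda+\rho)-\langle\lambda+\rho,\beta\rangle\beta$, starting from $\lambda+\rho=e_2+2e_3+3e_4+4e_5-\tfrac{10}{3}e_6-\tfrac{10}{3}e_7+\tfrac{10}{3}e_8$. Since $\Phi_\mathfrak{l}^+=\{e_j\pm e_i\mid 1\leq i<j\leq 5\}$ is of type $D_5$, the vector $s_\beta(\lambda+\rho)$ is $\Phi_\mathfrak{l}$-regular precisely when $|c'_1|,\dots,|c'_5|$ are pairwise distinct, and by Proposition 2.2(1) the singular rows contribute $0$ to the Jantzen sum, so only the regular rows matter.

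The key observation is that $W_\mathfrak{l}\cong W(D_5)$ acts on $(c'_1,\dots,c'_5)$ by signed permutations with an even number of sign changes, and hence preserves the multiset $\{|c'_1|,\dots,|c'_5|\}$. Consequently two vectors can lie in the same $W_\mathfrak{l}$-orbit only if these multisets coincide, and a regular orbit contains no singular element. If the multisets read off from the regular rows of the table are pairwise distinct, then condition (b) of Proposition 2.4 is automatic for any such regular $\beta_0$: no other $s_\beta(\lambda+\rho)$ with $\beta\in S_\lambda\setminus\{\beta_0\}$ can lie in the $W_\mathfrak{l}$-orbit of $s_{\beta_0}(\lambda+\rho)$, let alone be obtained from it via an element of odd length. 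Proposition 2.4 then yields $\sum_{\beta\in S_\lambda}Y(s_\beta(\lambda+\rho))\neq 0$, and Theorem 2.3 delivers the required reducibility.

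The main obstacle is the calculational bookkeeping: one must run through fifteen reflections $s_\beta$, compute the five relevant coefficients in each case, flag the regular rows by scanning for repeated absolute values, and verify that the regular rows produce pairwise distinct absolute-value multisets. Nothing new is required conceptually beyond the argument for $z=9$; only the arithmetic shifts, because $-a-4$ changes from $-\tfrac{8}{3}$ to $-\tfrac{10}{3}$.
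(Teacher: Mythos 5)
Your proposal is correct and is essentially the paper's own argument: determine $S_\lambda$ (the $15$ roots of $\Phi_\mathfrak{u}^+$ with at least one of $v(2),\dots,v(5)$ even), tabulate the $e_1,\dots,e_5$ coefficients of $s_\beta(\lambda+\rho)$, discard the $\Phi_\mathfrak{l}$-singular rows via Proposition 2.2(1), and separate the $W_\mathfrak{l}$-orbits of the regular rows so that Proposition 2.4 and Theorem 2.3 apply. The only difference is cosmetic—you separate orbits by the $W(D_5)$-invariant multiset $\{|c_1'|,\dots,|c_5'|\}$, whereas the paper uses the parity of the number of positive coefficients together with the number of zeros and fixes $\beta_0$ represented by ``$-++-+$''—and the computation you defer does check out: the five regular rows (``$+++++$'', ``$--+++$'', ``$-+-++$'', ``$-++-+$'', ``$-+++-$'' in the paper's Table 2) have pairwise distinct absolute-value multisets, so any of them can serve as $\beta_0$.
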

\begin{proof}
If $z=10$, then $a=-\frac{2}{3}$. For $\alpha_-(\mathrm{v})\in\Phi_\mathfrak{u}^+$, we have\[\langle\lambda+\rho,\alpha_-(\mathrm{v})\rangle=\frac{1}{2}((-1)^{v(2)}+2(-1)^{v(3)}+3(-1)^{v(4)}+4(-1)^{v(5)})+5.\] Then\[S_\lambda=\{\alpha_-(\mathrm{v})\in\Phi_\mathfrak{u}^+\mid\textrm{at least one of }v(2),v(3),v(4),\textrm{ and }v(5)\textrm{ is even}\}.\]Similar to Proposition 3.15, we need to compute the coefficients of $e_i$ for $1\leq i\leq5$ of $s_\beta(\lambda+\rho)$.

According to Table 2, it is immediate that $s_\beta(\lambda+\rho)$ is $\Phi_\mathfrak{l}$-regular precisely when $\beta$ is represented by ``$+++++$'', ``$--+++$'', ``$-+-++$'', ``$-++-+$'' or ``$-+++-$''. For the same reason as in Proposition 3.15, $Y(s_{\beta_0}(\lambda+\rho))$ for $\beta_0$ represented by ``$-++-+$'' leads to $\displaystyle{\sum_{\beta\in S_\lambda}Y(s_\beta(\lambda+\rho))}\neq0$ by Proposition 2.4, and hence the conclusion holds by Theorem 2.3.
\begin{table}
\begin{center}
\caption{Coefficients of $e_i$ for $1\leq i\leq5$ of $s_\beta(\lambda+\rho)$ for $\beta\in S_\lambda$ with $z=10$}
\begin{tabular}{|c|c|c|c|c|c|}
\hline
$\beta$&$e_1$&$e_2$&$e_3$&$e_4$&$e_5$\\
\hline
$+++++$&$-5$&$-4$&$-3$&$-2$&$-1$\\
\hline
$--+++$&$\frac{9}{2}$&$\frac{11}{2}$&$-\frac{5}{2}$&$-\frac{3}{2}$&$-\frac{1}{2}$\\
\hline
$-+-++$&$4$&$-3$&$6$&$-1$&$0$\\
\hline
$-++-+$&$\frac{7}{2}$&$-\frac{5}{2}$&$-\frac{3}{2}$&$\frac{13}{2}$&$\frac{1}{2}$\\
\hline
$-+++-$&$3$&$-2$&$-1$&$0$&$7$\\
\hline
$+--++$&$-\frac{7}{2}$&$\frac{9}{2}$&$\frac{11}{2}$&$-\frac{1}{2}$&$\frac{1}{2}$\\
\hline
$+-+-+$&$-3$&$4$&$-1$&$6$&$1$\\
\hline
$+-++-$&$-\frac{5}{2}$&$\frac{7}{2}$&$-\frac{1}{2}$&$\frac{1}{2}$&$\frac{13}{2}$\\
\hline
$++--+$&$-\frac{5}{2}$&$-\frac{3}{2}$&$\frac{9}{2}$&$\frac{11}{2}$&$\frac{3}{2}$\\
\hline
$++-+-$&$-2$&$-1$&$4$&$1$&$6$\\
\hline
$+++--$&$-\frac{3}{2}$&$-\frac{1}{2}$&$\frac{1}{2}$&$\frac{9}{2}$&$\frac{11}{2}$\\
\hline
$----+$&$2$&$3$&$4$&$5$&$2$\\
\hline
$---+-$&$\frac{3}{2}$&$\frac{5}{2}$&$\frac{7}{2}$&$\frac{3}{2}$&$\frac{11}{2}$\\
\hline
$--+--$&$1$&$2$&$1$&$4$&$5$\\
\hline
$-+---$&$\frac{1}{2}$&$\frac{1}{2}$&$\frac{5}{2}$&$\frac{7}{2}$&$\frac{9}{2}$\\
\hline
\end{tabular}
\end{center}
\end{table}
\end{proof}
\begin{theorem}
Suppose $\lambda=-ae_6-ae_7+ae_8$ for some $a\in\mathbb{C}$. Then $M_\mathfrak{p}^\mathfrak{g}(\lambda)$ is reducible if and only if $a\in-2+\frac{2}{3}\mathbb{Z}_{\geq0}$.
\end{theorem}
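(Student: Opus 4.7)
The plan is to mirror the assembly argument used in Theorems 3.2, 3.4, 3.7, 3.10, and 3.13: combine the restatement of the Jantzen-type lemma in the special-line coordinate with the EHW unitary range (Theorem 2.6) and the two ad-hoc reducibility results just proved (Propositions 3.15 and 3.16), then translate back to the variable $a$. I do not expect any genuine obstacle here; all the heavy lifting was done in Propositions 3.15 and 3.16, which filled in the gap between $A(\lambda_0)=8$ and $B(\lambda_0)=11$ that is not directly supplied by the EHW data.

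First I would write $\lambda=\lambda_0+z\zeta$ on the special line so that $a=\tfrac{2z-22}{3}$. For the ``only if'' direction, Lemma 3.14$'$(1) forces $z\in 1+\mathbb{Z}_{\geq 0}$ whenever $M_\mathfrak{p}^\mathfrak{g}(\lambda)$ is reducible, and Theorem 2.6(1) then rules out every real $z<A(\lambda_0)=8$. Combining these two, reducibility implies $z\in 8+\mathbb{Z}_{\geq 0}$.

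For the ``if'' direction I would treat the four residue classes in $\{8,9,10\}\cup(11+\mathbb{Z}_{\geq 0})$ case by case. Theorem 2.6(2), using $A(\lambda_0)=8$, $B(\lambda_0)=11$, and $C(\lambda_0)=3$, delivers reducibility at $z=8$ and $z=11$; Propositions 3.15 and 3.16 supply $z=9$ and $z=10$; and Lemma 3.14$'$(2) handles the entire semi-infinite tail $z\in 11+\mathbb{Z}_{\geq 0}$. Hence $M_\mathfrak{p}^\mathfrak{g}(\lambda)$ is reducible for every $z\in 8+\mathbb{Z}_{\geq 0}$.

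Finally, using $a=\tfrac{2z-22}{3}$, the set $z\in 8+\mathbb{Z}_{\geq 0}$ transports to $a\in-2+\tfrac{2}{3}\mathbb{Z}_{\geq 0}$, which is exactly the characterization claimed. The proof is therefore a short bookkeeping step once the two gap propositions are in hand; the only thing to be careful about is recording that the ``only if'' direction needs both Lemma 3.14$'$(1) (to force $z$ to be a positive integer, hence real) and Theorem 2.6(1) (to push it above $A(\lambda_0)$), since Theorem 2.6(1) by itself is stated only for real $z$.
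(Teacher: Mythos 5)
Your proposal is correct and follows essentially the same route as the paper: the paper's proof of this theorem is exactly the assembly you describe, citing Lemma 3.14$'$(1) together with Theorem 2.6(1) for the ``only if'' direction, and Lemma 3.14$'$(2), Propositions 3.15 and 3.16, and Theorem 2.6(2) for the ``if'' direction, then converting $z\in 8+\mathbb{Z}_{\geq0}$ to $a\in-2+\frac{2}{3}\mathbb{Z}_{\geq0}$ via $a=\frac{2z-22}{3}$. Your explicit remark that Lemma 3.14$'$(1) is needed to force $z$ to be real before Theorem 2.6(1) applies is a correct and slightly more careful reading of the same argument.
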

\begin{proof}
Write $\lambda=\lambda_0+z\zeta$ in the special line, and then Lemma 3.14$'$(1) and Theorem 2.6(1) show that $M_\mathfrak{p}^\mathfrak{g}(\lambda)$ is reducible only if $z\in8+\mathbb{Z}_{\geq0}$. On the other hand, Lemma 3.14$'$(2), Proposition 3.15, Proposition 3.16, and Theorem 2.6(2) show that if $z\in8+\mathbb{Z}_{\geq0}$, then $M_\mathfrak{p}^\mathfrak{g}(\lambda)$ is reducible. It follows that $M_\mathfrak{p}^\mathfrak{g}(\lambda)$ is reducible if and only if $z\in8+\mathbb{Z}_{\geq0}$, which is equivalent to $a\in-2+\frac{2}{3}\mathbb{Z}_{\geq0}$.
\end{proof}

\subsection{$(E_{7(-133)},SO(2)\times E_{6(-78)})$}

Let $\mathfrak{g}=\mathfrak{e}_7$ and let $\mathfrak{p}=\mathfrak{l}+\mathfrak{u}_+$ be a parabolic subalgebra of abelian type with $\mathfrak{l}=\mathfrak{so}(2,\mathbb{C})\oplus\mathfrak{e}_6$. We may choose a Cartan subalgebra $\mathfrak{h}\subseteq \mathfrak{l}$ and a simple root system $\Delta=\{\alpha_i\mid1\leq i\leq7\}$ given by the Dynkin diagram of Figure 2 such that $\mathfrak{p}$ is standard with respect to $(\mathfrak{h},\Delta)$. Then $\Delta_\mathfrak{l}=\{\alpha_i\mid1\leq i\leq6\}$ of $\Delta$.
\begin{figure}[H]
\centering \scalebox{0.7}{\includegraphics{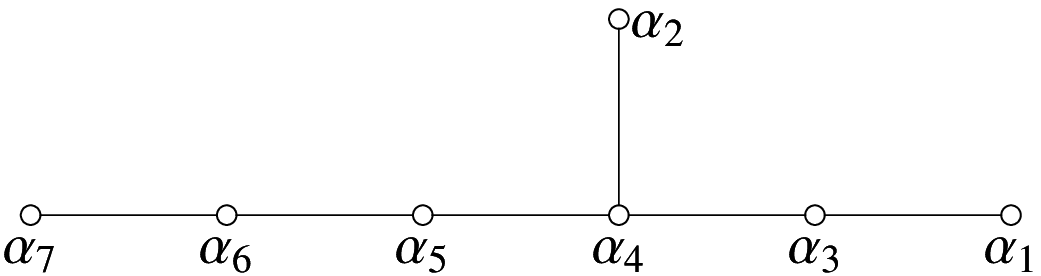}}
\caption{Dynkin diagram of $\mathfrak{e}_7$.}
\end{figure}
Embed $\mathfrak{h}_\mathbb{R}^*$, the $\mathbb{R}$-span of the simple roots, into the subspace $V_7:=\{v\in\mathbb{R}^8\mid(v,e_7+e_8)=0\}$ of $\mathbb{R}^8$. Then $\Phi_\mathfrak{l}^+$ equals the full set of the positive roots of $\mathfrak{e}_6$ as in Section 3.6, while $\alpha_7=e_6-e_5$ and \[\Phi_\mathfrak{u}^+=\{e_6\pm e_i\mid1\leq i\leq5\}\cup\{e_8-e_7\}\cup\{\alpha_+(\mathrm{v})\mid\displaystyle{\sum_{i=1}^5}v(i)\textrm{ odd}\}.\]Here we retain the notations $\alpha_\pm(\mathrm{v})$ as in \eqref{i}. Moreover, we have $\rho=e_2+2e_3+3e_4+4e_5+5e_6-\frac{17}{2}e_7+\frac{17}{2}e_8$.

If $M_\mathfrak{p}^\mathfrak{g}(\lambda)$ is of scalar type, an easy computation shows that $\lambda=ae_6-\frac{1}{2}ae_7+\frac{1}{2}ae_8$ for some $a\in\mathbb{C}$.
\begin{lemma}
\label{g}
Suppose $\lambda=ae_6-\frac{1}{2}ae_7+\frac{1}{2}ae_8$ for some $a\in\mathbb{C}$.
\begin{enumerate}[(1)]
\item If $a\notin-16+\mathbb{Z}_{\geq0}$, then $M_\mathfrak{p}^\mathfrak{g}(\lambda)$ is simple.
\item If $a\in\mathbb{Z}_{\geq0}$, then $M_\mathfrak{p}^\mathfrak{g}(\lambda)$ is reducible.
\end{enumerate}
\end{lemma}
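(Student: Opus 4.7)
The plan is to apply Theorem 2.1 following the template already used in Lemmas 3.1, 3.3, 3.5, 3.8, 3.11, and 3.14: enumerate the inner products $\langle\lambda+\rho,\beta\rangle$ for $\beta\in\Phi_\mathfrak{u}^+$ and read off the dichotomy from the simplicity criterion.

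First I would observe that $\lambda+\rho\in\Lambda_\mathfrak{l}^+$ for every $a\in\mathbb{C}$, since $\lambda$ is scalar (so $(\lambda,\alpha)=0$ for all $\alpha\in\Phi_\mathfrak{l}$) and $\langle\rho,\alpha\rangle=1$ for each simple root $\alpha\in\Delta_\mathfrak{l}$. Then, using $\lambda+\rho=e_2+2e_3+3e_4+4e_5+(a+5)e_6-\frac{a+17}{2}e_7+\frac{a+17}{2}e_8$ and the simply-laced normalization $\langle\mu,\beta\rangle=(\mu,\beta)$, I would compute the inner products with each of the three pieces of $\Phi_\mathfrak{u}^+$ described in the text: $\langle\lambda+\rho,e_6-e_i\rangle=a+6-i$ and $\langle\lambda+\rho,e_6+e_i\rangle=a+4+i$ for $1\leq i\leq 5$; $\langle\lambda+\rho,e_8-e_7\rangle=a+17$; and $\langle\lambda+\rho,\alpha_+(\mathrm{v})\rangle=a+11+\tfrac{1}{2}\bigl((-1)^{v(2)}+2(-1)^{v(3)}+3(-1)^{v(4)}+4(-1)^{v(5)}\bigr)$.

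The only slightly non-mechanical step is tabulating the $\alpha_+(\mathrm{v})$ contributions. Here I would note that the parity constraint $\sum_{i=1}^5 v(i)$ odd is absorbed by the free variable $v(1)$, so every sign pattern of $v(2),\ldots,v(5)$ is realized, and the sixteen sums $\pm 1\pm 2\pm 3\pm 4$ collapse to the eleven even integers $\{-10,-8,\ldots,8,10\}$, making $\langle\lambda+\rho,\alpha_+(\mathrm{v})\rangle$ range over $\{a+6,a+7,\ldots,a+16\}$. Consolidating the three families then gives the clean description $\{\langle\lambda+\rho,\beta\rangle:\beta\in\Phi_\mathfrak{u}^+\}=\{a+k:1\leq k\leq 17\}$.

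From this list, both parts follow from Theorem 2.1. Part (1) is immediate: the condition $a\notin-16+\mathbb{Z}_{\geq 0}$ is exactly the statement that none of $a+1,\ldots,a+17$ is a positive integer, so the forward direction yields simplicity. For part (2), $a\in\mathbb{Z}_{\geq 0}$ makes $a+1=\langle\lambda+\rho,e_6-e_5\rangle$ a positive integer; to invoke the converse direction of Theorem 2.1 one needs $\lambda+\rho$ regular, but this is automatic because all $a+k$ with $1\leq k\leq 17$ are strictly positive (hence nonzero), while the $\Phi_\mathfrak{l}^+$-inner products reduce to $\langle\rho,\alpha\rangle=\langle\rho_\mathfrak{l},\alpha\rangle\geq 1$ (the $W_\mathfrak{l}$-fixed part $\rho_\mathfrak{u}$ being orthogonal to $\Phi_\mathfrak{l}$). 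The only mildly nontrivial step is the $\alpha_+(\mathrm{v})$ bookkeeping; everything else is a direct appeal to Theorem 2.1, in complete parallel with Lemma 3.14.
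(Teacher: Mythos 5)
Your proposal is correct and follows essentially the same route as the paper: verify $\lambda+\rho\in\Lambda_\mathfrak{l}^+$, compute $\langle\lambda+\rho,\beta\rangle$ for the three families $e_6\pm e_i$, $e_8-e_7$, and $\alpha_+(\mathrm{v})$ (your values agree with the paper's $a+5\pm(i-1)$, $a+17$, and $a+11+\tfrac{1}{2}\bigl((-1)^{v(2)}+2(-1)^{v(3)}+3(-1)^{v(4)}+4(-1)^{v(5)}\bigr)$), and apply Theorem 2.1. Your extra bookkeeping that these values form exactly $\{a+1,\dots,a+17\}$ and your explicit regularity check for part (2) only make the paper's ``follows immediately'' step more precise.
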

\begin{proof}
It is obvious that $\lambda+\rho$ is always $\Phi_\mathfrak{l}^+$-dominant integral. Firstly, $\langle\lambda+\rho,e_8-e_7\rangle=a+17$. Secondly, $\langle\lambda+\rho,e_6\pm e_i\rangle=a+5\pm(i-1)$ for $1\leq i\leq5$. Thirdly, for $\alpha_+(\mathrm{v})\in\Phi_\mathfrak{u}^+$, one computes that $\langle\lambda+\rho,\alpha_+(\mathrm{v})\rangle=a+11+\frac{1}{2}((-1)^{v(2)}+2(-1)^{v(3)}+3(-1)^{v(4)}+4(-1)^{v(5)})$. Now the conclusion follows from Theorem 2.1 immediately.
\end{proof}
The maximal root $\gamma$ in $\Phi^+$ is $e_8-e_7$, and it follows that $\zeta=e_6-\frac{1}{2}e_7+\frac{1}{2}e_8$. If we write $\lambda=\lambda_0+z\zeta$ in the special line, then we obtain $\lambda_0=-17e_6+\frac{17}{2}e_7-\frac{17}{2}e_8$ and $a=z-17$. Now we may restate Lemma 3.18.
\begin{replemma}{g}
Suppose $\lambda=ae_6-\frac{1}{2}ae_7+\frac{1}{2}ae_8$ for some $a\in\mathbb{C}$. Write $\lambda=\lambda_0+z\zeta$ in the special line.
\begin{enumerate}[(1)]
\item If $z\notin1+\mathbb{Z}_{\geq0}$, then $M_\mathfrak{p}^\mathfrak{g}(\lambda)$ is simple.
\item If $z\in17+\mathbb{Z}_{\geq0}$, then $M_\mathfrak{p}^\mathfrak{g}(\lambda)$ is reducible.
\end{enumerate}
\end{replemma}
\begin{proof}
Because $a=z-17$, the conclusion follows from Lemma 3.18 immediately.
\end{proof}
By Theorem 2.3, Lemma 13.3, and Theorem 13.4 in [\textbf{EHW}], one may check that $A(\lambda_0)=9$, $B(\lambda_0)=17$, and $C(\lambda_0)=4$ in this case.

Actually, we only need to check $z\in\{10,11,12,14,15,16\}$. However, the Weyl group of $W_\mathfrak{l}$ is too complicated. It is hard to verify whether $\displaystyle{\sum_{\beta\in S_\lambda}Y(s_\beta(\lambda+\rho))}$ equals 0 by means of Proposition 2.4. Therefore, we provide an alternate method to solve the problem for $\mathfrak{e}_7$.

\begin{lemma}
Let $\alpha_u$ be the unique simple root in $\Phi_\mathfrak{u}^+$, and denote by $\theta_u$ the fundamental weight of $\alpha_u$. Suppose $\mu,\nu\in\mathfrak{h}^*$. If $\mu=\omega\nu$ for some $\omega\in W_\mathfrak{l}$, then $(\mu,\theta_u)=(\nu,\theta_u)$.
\end{lemma}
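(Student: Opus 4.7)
The plan is to observe that $W_{\mathfrak{l}}$ fixes $\theta_u$ pointwise, and then use that $W_{\mathfrak{l}}$ acts by isometries with respect to $(-,-)$.

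First I would unpack the hypothesis that $\mathfrak{p}$ is a maximal parabolic, which forces $\Delta = \Delta_{\mathfrak{l}} \sqcup \{\alpha_u\}$. By the defining property of the fundamental weight, $\langle \theta_u, \alpha_i\rangle = \delta_{i,u}$ for every simple root $\alpha_i$, so in particular $(\theta_u, \alpha) = 0$ for all $\alpha \in \Delta_{\mathfrak{l}}$. Consequently, for every simple reflection $s_\alpha$ with $\alpha \in \Delta_{\mathfrak{l}}$,
\[
s_\alpha(\theta_u) \;=\; \theta_u - \langle \theta_u, \alpha\rangle\, \alpha \;=\; \theta_u.
\]
Since $W_{\mathfrak{l}}$ is generated by such reflections, this yields $\omega(\theta_u) = \theta_u$ for every $\omega \in W_{\mathfrak{l}}$.

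Next I would invoke the standard fact that the Weyl group acts orthogonally on $\mathfrak{h}^*$ with respect to the Killing-form inner product $(-,-)$, so $(\omega x, y) = (x, \omega^{-1} y)$ for all $x, y \in \mathfrak{h}^*$ and $\omega \in W$. Combining this with $\omega^{-1}\theta_u = \theta_u$ (since $\omega^{-1} \in W_{\mathfrak{l}}$), the hypothesis $\mu = \omega\nu$ gives
\[
(\mu, \theta_u) \;=\; (\omega\nu, \theta_u) \;=\; (\nu, \omega^{-1}\theta_u) \;=\; (\nu, \theta_u),
\]
which is the desired equality.

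There is no real obstacle here; the entire content is that $\theta_u$ lies in the orthogonal complement of the root lattice of $\mathfrak{l}$, hence in the fixed subspace of $W_{\mathfrak{l}}$. The lemma is a preparation to convert the criterion of Proposition 2.4 (which involves the existence of $\omega \in W_{\mathfrak{l}}$ relating $s_{\beta_0}(\lambda+\rho)$ and $s_\beta(\lambda+\rho)$) into the easier-to-check numerical condition $(s_{\beta_0}(\lambda+\rho), \theta_u) = (s_\beta(\lambda+\rho), \theta_u)$, which will be used in the $\mathfrak{e}_7$ computation where the full Weyl group $W_{\mathfrak{l}}$ is too large to handle by direct orbit inspection.
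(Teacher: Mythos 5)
Your proof is correct and follows essentially the same route as the paper: both arguments reduce the claim to the fact that $\theta_u$ is fixed by $W_\mathfrak{l}$ and then conclude via the $W$-invariance of $(-,-)$, writing $(\mu,\theta_u)=(\omega\nu,\theta_u)=(\nu,\omega^{-1}\theta_u)=(\nu,\theta_u)$. Your verification that $\omega\theta_u=\theta_u$ (directly from $s_\alpha\theta_u=\theta_u$ for $\alpha\in\Delta_\mathfrak{l}$, using that these reflections generate $W_\mathfrak{l}$) is in fact a touch more streamlined than the paper's computation of $\langle\omega\theta_u,\alpha\rangle$ for all simple roots, but it is the same underlying idea.
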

\begin{proof}
Because $\omega\in W_\mathfrak{l}$, $\omega\alpha\in\Phi_\mathfrak{l}$ for all $\alpha\in\Delta_\mathfrak{l}$. Hence $\langle\omega\theta_u,\alpha\rangle=\frac{2(\omega\theta_u,\alpha)}{(\alpha,\alpha)}=\frac{2(\theta_u,\omega^{-1}\alpha)}{(\alpha,\alpha)}=0$ for all $\alpha\in\Delta_\mathfrak{l}$. On the other hand, $\omega^{-1}\alpha_u=\alpha_u+\beta$ for some $\beta\in\mathrm{span}_\mathbb{Z}\Delta_\mathfrak{l}$, and it follows that\[\langle\omega\theta_u,\alpha_u\rangle=\frac{2(\omega\theta_u,\alpha_u)}{(\alpha_u,\alpha_u)}=\frac{2(\theta_u,\omega^{-1}\alpha_u)}{(\alpha_u,\alpha_u)}= \frac{2(\theta_u,\alpha_u+\beta)}{(\alpha_u,\alpha_u)}=\frac{2(\theta_u,\alpha_u)}{(\alpha_u,\alpha_u)}=\langle\theta_u,\alpha_u\rangle=1.\]Therefore, $\omega\theta_u=\theta_u$. Now if $\mu=\omega\nu$ for some $\omega\in W_\mathfrak{l}$, then $(\mu,\theta_u)=(\omega\nu,\theta_u)=(\nu,\omega^{-1}\theta_u)=(\nu,\theta_u)$.
\end{proof}
\begin{proposition}
Suppose $\lambda=ae_6-\frac{1}{2}ae_7+\frac{1}{2}ae_8$ for some $a\in\mathbb{C}$. Write $\lambda=\lambda_0+z\zeta$ in the special line. If $z\in\{12,14,15,16\}$, then $M_\mathfrak{p}^\mathfrak{g}(\lambda)$ is reducible.
\end{proposition}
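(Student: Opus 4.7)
The plan is to apply Jantzen's criterion (Theorem 2.3) via Proposition 2.4, using Lemma 3.19 as the key device that bypasses the complicated Weyl group $W_\mathfrak{l}\cong W(E_6)$. For each $z\in\{12,14,15,16\}$, I will exhibit a distinguished $\beta_0\in S_\lambda$ for which (a) $Y(s_{\beta_0}(\lambda+\rho))\neq0$ and (b) no other $\beta\in S_\lambda$ can satisfy $s_{\beta_0}(\lambda+\rho)=\omega s_\beta(\lambda+\rho)$ for any $\omega\in W_\mathfrak{l}$; Proposition 2.4 then gives $\sum_{\beta\in S_\lambda}Y(s_\beta(\lambda+\rho))\neq0$ and hence reducibility.

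First, for each fixed $z$, I would write down $\lambda+\rho$ explicitly using $a=z-17$ and the expression for $\rho$, and then enumerate $S_\lambda$ by running through the three families of roots in $\Phi_\mathfrak{u}^+$ (namely $e_8-e_7$, the ten roots $e_6\pm e_i$, and the sixteen roots $\alpha_+(\mathrm{v})$ with $\sum v(i)$ odd) and computing $\langle\lambda+\rho,\beta\rangle$ via the three formulas recorded in the proof of Lemma 3.18. Next, I would choose as $\beta_0$ one of the roots (e.g. the ``simplest'' reflection that places $s_{\beta_0}(\lambda+\rho)$ in $\Phi_\mathfrak{l}$-regular integral position). Regularity of $s_{\beta_0}(\lambda+\rho)$ is checked componentwise in the $e_i$-basis and guarantees $Y(s_{\beta_0}(\lambda+\rho))\neq0$ via Proposition 2.2(1).

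The key separation step is condition (b). By Lemma 3.19, if $s_{\beta_0}(\lambda+\rho)$ and $s_\beta(\lambda+\rho)$ lie in the same $W_\mathfrak{l}$-orbit, then they must agree when paired with the fundamental weight $\theta_u$ of $\alpha_7$. Since
\[
\bigl(s_\beta(\lambda+\rho),\theta_u\bigr)=(\lambda+\rho,\theta_u)-\langle\lambda+\rho,\beta\rangle\,(\beta,\theta_u),
\]
and $(\lambda+\rho,\theta_u)$ is a fixed quantity independent of $\beta$, this pairing is determined by the product $\langle\lambda+\rho,\beta\rangle(\beta,\theta_u)$. Because $\theta_u$ is parallel to $\zeta$, the value $(\beta,\theta_u)$ depends only on the multiplicity of $\alpha_7$ in $\beta$ (which is $0$ or $1$ on $\Phi_\mathfrak{u}^+$ by the abelian type assumption, so in fact $(\beta,\theta_u)$ is the same nonzero constant for every $\beta\in\Phi_\mathfrak{u}^+$). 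Consequently $(s_\beta(\lambda+\rho),\theta_u)$ is an affine function of $\langle\lambda+\rho,\beta\rangle$, and (b) reduces to the verification that $\langle\lambda+\rho,\beta_0\rangle$ is strictly smaller than $\langle\lambda+\rho,\beta\rangle$ for every other $\beta\in S_\lambda$. This is a finite numerical check carried out case by case using the explicit formulas from Lemma 3.18.

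The main obstacle is the combinatorial bookkeeping: for each of the four values $z=12,14,15,16$, one has to identify a $\beta_0$ (typically one of the two ``extremal'' roots $e_8-e_7$ or $e_6+e_5$) where $\langle\lambda+\rho,\beta_0\rangle$ is uniquely minimal in $S_\lambda$ and where $s_{\beta_0}(\lambda+\rho)$ is $\Phi_\mathfrak{l}$-regular integral. The potential difficulty is that for some $z$ two distinct $\beta$'s in $S_\lambda$ might yield the same value of $\langle\lambda+\rho,\beta\rangle$; in that event the uniqueness of $\beta_0$ must be secured by exploiting the sign pattern of the $\alpha_+(\mathrm{v})$-roots (i.e.\ using the parity constraint $\sum v(i)$ odd) rather than the pairing $(\,\cdot\,,\theta_u)$ alone, which is why the argument is presented separately from the $E_6$ case rather than being subsumed in it.
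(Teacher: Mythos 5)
Your overall route is the paper's route: pick a distinguished $\beta_0\in S_\lambda$, use Lemma 3.19 (pairing with $\theta_u=\theta_7=e_6-\tfrac{1}{2}e_7+\tfrac{1}{2}e_8$) to rule out any relation $s_{\beta_0}(\lambda+\rho)=\omega s_\beta(\lambda+\rho)$ with $\omega\in W_\mathfrak{l}$, check $\Phi_\mathfrak{l}$-regularity of $s_{\beta_0}(\lambda+\rho)$, and conclude via Proposition 2.4 and Theorem 2.3. Your observation that $(s_\beta(\lambda+\rho),\theta_u)=(\lambda+\rho,\theta_u)-\langle\lambda+\rho,\beta\rangle(\beta,\theta_u)$ with $(\beta,\theta_u)$ a single nonzero constant on all of $\Phi_\mathfrak{u}^+$ is correct and is a genuine streamlining of the paper's Table 3: it reduces the separation condition to distinctness of the integers $\langle\lambda+\rho,\beta\rangle$.

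Two concrete problems remain. First, you misstate the separation criterion: what is needed is that $\langle\lambda+\rho,\beta_0\rangle$ be \emph{different} from $\langle\lambda+\rho,\beta\rangle$ for all other $\beta\in S_\lambda$, and the choice that achieves this is the highest root $\beta_0=e_8-e_7=\gamma$, whose value $\langle\lambda+\rho,\gamma\rangle=a+17=z$ is the \emph{unique maximum} over $\Phi_\mathfrak{u}^+$ (the other values are $a+5\pm(i-1)\le a+9$ and $a+11+\tfrac{1}{2}((-1)^{v(2)}+2(-1)^{v(3)}+3(-1)^{v(4)}+4(-1)^{v(5)})\le a+16$). Seeking a strictly \emph{smaller} value, as you propose, fails: for $z=12$ the minimum positive value $1$ is attained both by $e_6+e_2$ and by an $\alpha_+(\mathrm{v})$, so no uniquely minimal $\beta_0$ exists. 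Second, you treat the regularity check (a) as routine bookkeeping, but it is precisely where the hypothesis $z\in\{12,14,15,16\}$ enters and it is not a mere componentwise inspection: one must pair $s_{e_8-e_7}(\lambda+\rho)$ against the sixteen roots $\alpha_-(\mathrm{v})\in\Phi_\mathfrak{l}^+$, obtaining $\tfrac{1}{2}(-1)^{v(2)}+(-1)^{v(3)}+\tfrac{3}{2}(-1)^{v(4)}+2(-1)^{v(5)}-11-a$, which is nonzero exactly because $a\in\{-5,-3,-2,-1\}$ here; for $z=10,11$ it vanishes for suitable $\mathrm{v}$, which is why the paper needs the separate argument of Proposition 3.21. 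As written, your proposal neither fixes the correct $\beta_0$ nor performs this verification, so the case-by-case check that carries the content of the proposition is still missing, even though the framework you set up (and in particular the affine reformulation of Lemma 3.19) is the right one and, once $\beta_0=e_8-e_7$ is chosen, completes the proof exactly as in the paper.
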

\begin{proof}
The only simple root in $\Phi_\mathfrak{u}^+$ is $\alpha_7=e_6-e_5$, and the fundamental weight $\theta_7$ of $\alpha_7$ is $e_6-\frac{1}{2}e_7+\frac{1}{2}e_8$. It is easy to write down that $\lambda+\rho=e_2+2e_3+3e_4+4e_5+(a+5)e_6-\frac{a+17}{2}e_7+\frac{a+17}{2}e_8$. In order to apply Proposition 2.4 via the contrapositive of Lemma 3.19, we need to compute $(s_\beta(\lambda+\rho),\theta_7)$ for $\beta\in\Phi_\mathfrak{u}^+$. It is almost immediate to work out that\[(s_{e_6\pm e_i}(\lambda+\rho),\theta_7)=\frac{a+17}{2}\pm(i-1)\]for $1\leq i\leq5$ and\[(s_{e_8-e_7}(\lambda+\rho),\theta_7)=\frac{a-7}{2}.\]As for the roots in $\{\alpha_+(\mathrm{v})\mid\displaystyle{\sum_{i=1}^5v(i)}\textrm{ odd}\}$, let us still use the notation as in Proposition 3.15. Because $\theta_7$ only involves $e_6$, $e_7$, and $e_8$, we just write down the coefficients of these three vectors for each $s_\beta(\lambda+\rho)$. The values are listed in Table 3.
\begin{table}
\begin{center}
\caption{Coefficients of $e_6$, $e_7$, and $e_8$ of $s_\beta(\lambda+\rho)$ and $(s_\beta(\lambda+\rho),\theta_7)$}
\begin{tabular}{|c|c|c|c|c|c|}
\hline
$\beta$&$e_6$&$e_7$&$e_8$&$(s_\beta(\lambda+\rho),\theta_7)$\\
\hline
$-++++$&$\frac{a}{2}-3$&$-\frac{1}{2}$&$\frac{1}{2}$&$\frac{a-5}{2}$\\
\hline
$+-+++$&$\frac{a-5}{2}$&$-1$&$1$&$\frac{a-3}{2}$\\
\hline
$++-++$&$\frac{a}{2}-2$&$-\frac{3}{2}$&$\frac{3}{2}$&$\frac{a-1}{2}$\\
\hline
$+++-+$&$\frac{a-3}{2}$&$-2$&$2$&$\frac{a+1}{2}$\\
\hline
$++++-$&$\frac{a}{2}-1$&$-\frac{5}{2}$&$\frac{5}{2}$&$\frac{a+3}{2}$\\
\hline
$---++$&$\frac{a-3}{2}$&$-2$&$2$&$\frac{a+1}{2}$\\
\hline
$--+-+$&$\frac{a}{2}-1$&$-\frac{5}{2}$&$\frac{5}{2}$&$\frac{a+3}{2}$\\
\hline
$--++-$&$\frac{a-1}{2}$&$-3$&$3$&$\frac{5}{2}$\\
\hline
$-+--+$&$\frac{a-1}{2}$&$-3$&$3$&$\frac{5}{2}$\\
\hline
$-+-+-$&$\frac{a}{2}$&$-\frac{7}{2}$&$\frac{7}{2}$&$\frac{a+7}{2}$\\
\hline
$-++--$&$\frac{a+1}{2}$&$-4$&$4$&$\frac{a+9}{2}$\\
\hline
$+---+$&$\frac{a}{2}$&$-\frac{7}{2}$&$\frac{7}{2}$&$\frac{a+7}{2}$\\
\hline
$+--+-$&$\frac{a+1}{2}$&$-4$&$4$&$\frac{a+9}{2}$\\
\hline
$+-+--$&$\frac{a}{2}+1$&$-\frac{9}{2}$&$\frac{9}{2}$&$\frac{a+11}{2}$\\
\hline
$++---$&$\frac{a+3}{2}$&$-5$&$5$&$\frac{a+13}{2}$\\
\hline
$-----$&$\frac{a}{2}+2$&$-\frac{11}{2}$&$\frac{11}{2}$&$\frac{a+15}{2}$\\
\hline
\end{tabular}
\end{center}
\end{table}
According to Table 3, it is immediate that\[(s_{e_8-e_7}(\lambda+\rho),\theta_7)=\frac{a-7}{2}\neq(s_\beta(\lambda+\rho),\theta_7)\]for all $\beta\in\{\alpha_+(\mathrm{v})\mid\displaystyle{\sum_{i=1}^5}v(i)\textrm{ odd}\}$. Moreover,\[(s_{e_8-e_7}(\lambda+\rho),\theta_7)=\frac{a-7}{2}\neq\frac{a+17}{2}\pm(i-1)\]for $1\leq i\leq5$. Hence by Lemma 3.19, there does not exist $\omega\in W_\mathfrak{l}$ such that $\omega s_{e_8-e_7}(\lambda+\rho)$ equals $s_\beta(\lambda+\rho)$ for any other $\beta\in\Phi_\mathfrak{u}^+$. On the other hand, if $z\in\{12,14,15,16\}$, then $\langle\lambda+\rho,e_8-e_7\rangle=a+17=z\in\mathbb{Z}_{>0}$, so $e_8-e_7\in S_\lambda$.

What remains to prove is that $Y(s_{e_8-e_7}(\lambda+\rho))\neq0$, and $Y(s_{e_8-e_7}(\lambda+\rho))$ leads to $\displaystyle{\sum_{\beta\in S_\lambda}Y(s_\beta(\lambda+\rho))}\neq0$ by Proposition 2.4, so the conclusion follows from Theorem 2.3. In fact, it is obvious that $(s_{e_8-e_7}(\lambda+\rho),e_j\pm e_i)=j-i\in\mathbb{Z}\setminus\{0\}$ for $1\leq i<j\leq5$. For $\alpha=\alpha_-(\mathrm{v})\in\Phi_\mathfrak{l}^+$, we have\[(s_{e_8-e_7}(\lambda+\rho),\alpha)=\frac{1}{2}(-1)^{v(2)}+(-1)^{v(3)}+\frac{3}{2}(-1)^{v(4)}+2(-1)^{v(5)}-11-a.\]If $z\in\{12,14,15,16\}$, then $a\in\{-5,-3,-2,-1\}$ because $a=z-17$. Thus for all choices of values of $v(i)$ for $2\leq i\leq5$, $\frac{1}{2}(-1)^{v(2)}+(-1)^{v(3)}+\frac{3}{2}(-1)^{v(4)}+2(-1)^{v(5)}-11-a\in\mathbb{Z}\setminus\{0\}$. This shows that $s_{e_8-e_7}(\lambda+\rho)$ is $\Phi_\mathfrak{l}$-regular integral, and $Y(s_{e_8-e_7}(\lambda+\rho))\neq0$ by Proposition 2.2(1).
\end{proof}
\begin{proposition}
Suppose $\lambda=ae_6-\frac{1}{2}ae_7+\frac{1}{2}ae_8$ for some $a\in\mathbb{C}$. Write $\lambda=\lambda_0+z\zeta$ in the special line. If $z\in\{10,11\}$, then $M_\mathfrak{p}^\mathfrak{g}(\lambda)$ is reducible.
\end{proposition}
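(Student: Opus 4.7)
The approach mirrors that of Proposition 3.20: apply Proposition 2.4 together with Theorem 2.3 to show that the Jantzen sum $\sum_{\beta \in S_\lambda} Y(s_\beta(\lambda+\rho))$ is nonzero. The complication is that the choice $\beta_0 = e_8 - e_7$ used there no longer works: a direct check shows $s_{e_8-e_7}(\lambda+\rho)$ is $\Phi_\mathfrak{l}$-singular for both $z = 10$ and $z = 11$ (for instance, at $z = 11$ it is orthogonal to $\alpha_-(0,0,0,0,0) \in \Phi_\mathfrak{l}^+$), so $Y(s_{e_8-e_7}(\lambda+\rho)) = 0$ by Proposition 2.2(1), and I need a different $\beta_0 \in S_\lambda$ for each value of $z$.

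For $z = 11$ (so $a = -6$) I would take $\beta_0 = \alpha_+(0,1,0,0,0)$. One computes $\langle\lambda+\rho,\beta_0\rangle = 9$, so $\beta_0 \in S_\lambda$, and a finite verification against the positive Levi roots (namely $e_j \pm e_i$ with $1 \leq i < j \leq 5$ and the even-sum $\alpha_-(\mathrm{v}')$) shows $s_{\beta_0}(\lambda+\rho)$ is $\Phi_\mathfrak{l}$-regular integral, hence $Y(s_{\beta_0}(\lambda+\rho)) \neq 0$ by Proposition 2.2(1). Using the closed forms $(s_{\alpha_+(\mathrm{v})}(\lambda+\rho),\theta_7) = \frac{a+5-\epsilon}{2}$ with $\epsilon = (-1)^{v(2)} + 2(-1)^{v(3)} + 3(-1)^{v(4)} + 4(-1)^{v(5)}$, $(s_{e_6 \pm e_i}(\lambda+\rho),\theta_7) = \frac{a+17}{2} \mp (i-1)$, and $(s_{e_8-e_7}(\lambda+\rho),\theta_7) = \frac{a-7}{2}$, one sees that only $\mathrm{v} = (0,1,0,0,0)$ produces $\epsilon = 8$, giving $\theta_7$-projection $-\frac{9}{2}$, a value attained by no other $\beta \in S_\lambda$. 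By Lemma 3.19 the uniqueness of the projection forces clause (b) of Proposition 2.4 automatically.

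For $z = 10$ (so $a = -7$) I would take $\beta_0 = \alpha_+(1,1,1,0,0)$, which lies in $S_\lambda$ since $\langle\lambda+\rho,\beta_0\rangle = 6$, and whose reflected image $s_{\beta_0}(\lambda+\rho) = (3,4,5,0,1,-5,-2,2)$ is $\Phi_\mathfrak{l}$-regular integral by the same kind of finite check. The difficulty is that the $\theta_7$-projection $-3$ of this vector is shared by $\alpha_+(0,0,0,1,0) \in S_\lambda$, whose reflection yields $(-3,-2,-1,6,1,-5,-2,2)$. The main obstacle is to rule out a $W_\mathfrak{l}$-relation between these two weights, since $W_\mathfrak{l} = W(\mathfrak{e}_6)$ is too large to enumerate directly. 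My plan is to compute the $W_\mathfrak{l}$-dominant representative of each weight by iteratively applying the simple reflections $s_{\alpha_1},\dots,s_{\alpha_6}$ whenever the current vector fails to be dominant; the expected outputs are $(0,1,2,3,4,-6,-3,3)$ and $(0,0,1,2,5,-6,-3,3)$ respectively, which are visibly distinct, so the two weights lie in different $W_\mathfrak{l}$-orbits. Together with Lemma 3.19 for all other $\beta \in S_\lambda$, this establishes clause (b) of Proposition 2.4, and laying the dominance computation out as a short table of successive reflections should keep the case analysis transparent.
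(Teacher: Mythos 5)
Your proposal is correct and rests on the same skeleton as the paper's proof: apply Proposition 2.4 and Theorem 2.3, using the $\theta_7$-projection invariant of Lemma 3.19 to verify condition (b), and a direct $\Phi_\mathfrak{l}$-regularity check of $s_{\beta_0}(\lambda+\rho)$ for condition (a). Your specific computations check out: for $z=11$ you pick the same $\beta_0$ (the pattern $+-+++$, i.e.\ $\alpha_+(0,1,0,0,0)$) as the paper, and your observation that its $\theta_7$-projection $-\tfrac{9}{2}$ is attained by no other element of $S_\lambda$ is a genuine streamlining: it removes the paper's need to first reduce, via Table 4, to the five sign patterns whose reflections could be $\Phi_\mathfrak{l}$-regular. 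For $z=10$ you deviate by choosing $\beta_0=\alpha_+(1,1,1,0,0)$ (pattern $---++$), whose projection $-3$ is shared by $\alpha_+(0,0,0,1,0)\in S_\lambda$, and you then propose a $W(\mathfrak{e}_6)$ dominant-representative computation to separate the two orbits. That is valid (and your data --- $\langle\lambda+\rho,\beta_0\rangle=6$, the reflected weights $(3,4,5,0,1,-5,-2,2)$ and $(-3,-2,-1,6,1,-5,-2,2)$ --- are correct), but it is heavier than necessary: the clashing weight $(-3,-2,-1,6,1,-5,-2,2)$ is orthogonal to $e_3+e_5\in\Phi_\mathfrak{l}^+$, hence $\Phi_\mathfrak{l}$-singular, so it cannot lie in the $W_\mathfrak{l}$-orbit of the regular weight $s_{\beta_0}(\lambda+\rho)$ and no orbit computation is needed; this singular/regular dichotomy is exactly how the paper's Table 4 analysis disposes of such roots. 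Alternatively, choosing the paper's $\beta_0$ for $z=10$ (pattern $-++++$, projection $\tfrac{a-5}{2}=-6$) makes the $\theta_7$-projection unique in $S_\lambda$, so the clash never arises and your $z=11$-style argument goes through verbatim.
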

\begin{proof}
Recall that $\lambda+\rho=e_2+2e_3+3e_4+4e_5+(a+5)e_6-\frac{a+17}{2}e_7+\frac{a+17}{2}e_8$. Assume $z=10$ first, and then $a=z-17=-7$. Recall in the proof of Proposition 3.20 that $(s_{e_8-e_7}(\lambda+\rho),\alpha)=\frac{1}{2}(-1)^{v(2)}+(-1)^{v(3)}+\frac{3}{2}(-1)^{v(4)}+2(-1)^{v(5)}-4$ for $\alpha=\alpha_-(\mathrm{v})\in\Phi_\mathfrak{l}^+$, and if $v(1)=v(2)=1$ and $v(3)=v(4)=v(5)=0$, then $(s_{e_8-e_7}(\lambda+\rho),\alpha)=0$. Thus $s_{e_8-e_7}(\lambda+\rho)$ is $\Phi_\mathfrak{l}$-singular. Next consider $s_{e_6\pm e_i}(\lambda+\rho)$ for $1\leq i\leq5$. If $i\neq3$, then $(s_{e_6\pm e_i}(\lambda+\rho),e_2\pm e_i)=0$. Hence $s_{e_6\pm e_i}(\lambda+\rho)$ is $\Phi_\mathfrak{l}$-regular only if $i=3$. But $\langle\lambda+\rho,e_6\pm e_3\rangle\notin\mathbb{Z}^+$, so $e_6\pm e_3\notin S_\lambda$. Therefore, we only need to consider the roots in $\Phi_\mathfrak{u}^+$ of the form $\alpha_+(\mathrm{v})$ with $\displaystyle{\sum_{i=1}^5v(i)}$ odd.
\begin{table}
\begin{center}
\caption{Coefficients of $e_i$ for $1\leq i\leq5$ of $s_\beta(\lambda+\rho)$}
\begin{tabular}{|c|c|c|c|c|c|}
\hline
$\beta$&$e_1$&$e_2$&$e_3$&$e_4$&$e_5$\\
\hline
$-++++$&$\frac{a}{2}+8$&$-\frac{a}{2}-7$&$-\frac{a}{2}-6$&$-\frac{a}{2}-5$&$-\frac{a}{2}-4$\\
\hline
$+-+++$&$-\frac{a+15}{2}$&$\frac{a+17}{2}$&$-\frac{a+11}{2}$&$-\frac{a+9}{2}$&$-\frac{a+7}{2}$\\
\hline
$++-++$&$-\frac{a}{2}-7$&$-\frac{a}{2}-6$&$\frac{a}{2}+9$&$-\frac{a}{2}-4$&$-\frac{a}{2}-3$\\
\hline
$+++-+$&$-\frac{a+13}{2}$&$-\frac{a+11}{2}$&$-\frac{a+9}{2}$&$\frac{a+17}{2}$&$-\frac{a+5}{2}$\\
\hline
$++++-$&$-\frac{a}{2}-6$&$-\frac{a}{2}-5$&$-\frac{a}{2}-4$&$-\frac{a}{2}-3$&$\frac{a}{2}+10$\\
\hline
$---++$&$\frac{a+13}{2}$&$\frac{a+15}{2}$&$\frac{a+17}{2}$&$-\frac{a+7}{2}$&$-\frac{a+5}{2}$\\
\hline
$--+-+$&$\frac{a}{2}+6$&$\frac{a}{2}+7$&$-\frac{a}{2}-4$&$\frac{a}{2}+9$&$-\frac{a}{2}-2$\\
\hline
$--++-$&$\frac{a+11}{2}$&$\frac{a+13}{2}$&$-\frac{a+7}{2}$&$-\frac{a+5}{2}$&$\frac{a+19}{2}$\\
\hline
$-+--+$&$\frac{a+11}{2}$&$-\frac{a+9}{2}$&$\frac{a+15}{2}$&$\frac{a+17}{2}$&$-\frac{a+3}{2}$\\
\hline
$-+-+-$&$\frac{a}{2}+5$&$-\frac{a}{2}-4$&$\frac{a}{2}+7$&$-\frac{a}{2}-2$&$\frac{a}{2}+9$\\
\hline
$-++--$&$\frac{a+9}{2}$&$-\frac{a+7}{2}$&$-\frac{a+5}{2}$&$\frac{a+15}{2}$&$\frac{a+17}{2}$\\
\hline
$+---+$&$-\frac{a}{2}-5$&$\frac{a}{2}+6$&$\frac{a}{2}+7$&$\frac{a}{2}+8$&$-\frac{a}{2}-1$\\
\hline
$+--+-$&$-\frac{a+9}{2}$&$\frac{a+11}{2}$&$\frac{a+13}{2}$&$-\frac{a+3}{2}$&$\frac{a+17}{2}$\\
\hline
$+-+--$&$-\frac{a}{2}-4$&$\frac{a}{2}+5$&$-\frac{a}{2}-2$&$\frac{a}{2}+7$&$\frac{a}{2}+8$\\
\hline
$++---$&$-\frac{a+7}{2}$&$-\frac{a+5}{2}$&$\frac{a+11}{2}$&$\frac{a+13}{2}$&$\frac{a+15}{2}$\\
\hline
\end{tabular}
\end{center}
\end{table}
Table 4 lists the coefficients of $e_i$ for $1\leq i\leq5$ of $s_\beta(\lambda+\rho)$ for $\beta\in\{\alpha_+(\mathrm{v})\mid\displaystyle{\sum_{i=1}^5v(i)}\textrm{ odd}\}$. We exclude the case ``$-----$'' in Table 4 because the root represented by it does not lie in $S_\lambda$ for both $z=10$ and $z=11$. Moreover, for $z=10$, i.e., $a=-7$, the root represented by ``$++---$'' is also excluded for the same reason. Now according to Table 4, one checks immediately that there does not exist $e_j\pm e_i$ for $1\leq i<j\leq5$ such that $(s_\beta(\lambda+\rho),e_j\pm e_i)=0$ only if $\beta$ is represented by one of the following five sign patterns
\begin{equation}
\label{h}
-++++, ~~ +-+++, ~~ ---++, ~~ --+-+, ~~ --++-.
\end{equation}
Therefore, we only need to consider these five roots, which are the only possible roots $\beta$ such that $s_\beta(\lambda+\rho)$ are $\Phi_\mathfrak{l}$-regular in $S_\lambda$.

Let us consider the root represented by ``$-++++$'', i.e.,\[\beta_0=\frac{1}{2}(-e_1+e_2+e_3+e_4+e_5+e_6-e_7+e_8).\]First, $\langle\lambda+\rho,\beta_0\rangle=9\in\mathbb{Z}_{>0}$ shows that $\beta_0\in S_\lambda$. Second, according to Table 4, we know that\[s_{\beta_0}(\lambda+\rho)=\frac{1}{2}(9e_1-7e_2-5e_3-3_4-e_5-13e_6-e_7+e_8).\]It is obvious that $(s_{\beta_0}(\lambda+\rho),e_j\pm e_i)\in\mathbb{Z}\setminus\{0\}$ for $1\leq i<j\leq5$. For a root of the form $\alpha_-(\mathrm{v})$ in $\Phi_\mathfrak{l}^+$,\[(\alpha_-(\mathrm{v}),s_{\beta_0}(\lambda+\rho))=\frac{1}{4}(9(-1)^{v(1)}-7(-1)^{v(2)}-5(-1)^{v(3)}-3(-1)^{v(4)}-(-1)^{v(5)}+15)\]which equals 0 if and only if $v(1)$ and $v(3)$ are even, while $v(2)$, $v(4)$, and $v(5)$ are odd. But this is not a root in $\Phi_\mathfrak{l}$. This shows that $s_{\beta_0}(\lambda+\rho)$ is $\Phi_\mathfrak{l}$-regular integral. Hence $Y(s_{\beta_0}(\lambda+\rho))\neq0$ by Proposition 2.2(1).

According to Table 3, we know that if $\beta_1\neq\beta_2$ where $\beta_1$ and $\beta_2$ are chosen from the five roots in \eqref{h}, then $(\beta_1,\theta_7)\neq(\beta_2,\theta_7)$. Hence by Lemma 3.19, there does not exist $\omega\in W_\mathfrak{l}$ such that $s_{\beta_0}(\lambda+\rho)=\omega s_{\beta}(\lambda+\rho)$ for any other root $\beta$ chosen from the five roots in \eqref{h}. By Proposition 2.4 and Theorem 2.3, the conclusion holds in the case of $z=10$.

The proof for $z=11$ is parallel. One shows that the only possible roots $\beta$ such that $s_\beta(\lambda+\rho)$ are $\Phi_\mathfrak{l}$-regular in $S_\lambda$ are just those represented by ``$-++++$'', ``$+-+++$'', ``$++-++$'', ``$+++-+$'', and ``$++++-$'', and the root represented by ``$+-+++$'' can be chosen as $\beta_0$ as in the case of $z=10$.
\end{proof}
\begin{theorem}
Suppose $\lambda=ae_6-\frac{1}{2}ae_7+\frac{1}{2}ae_8$ for some $a\in\mathbb{C}$. Then $M_\mathfrak{p}^\mathfrak{g}(\lambda)$ is reducible if and only if $a\in-8+\mathbb{Z}_{\geq0}$.
\end{theorem}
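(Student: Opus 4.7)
The plan is to imitate the structure used in the previous six cases: translate the question into the $z$-coordinate on the special line via $\lambda=\lambda_0+z\zeta$ with $a=z-17$, establish a necessary condition on $z$ using the "easy" direction (Jantzen implication plus the asymptotic simplicity below $A(\lambda_0)$), and then match this with a sufficient condition assembled from Lemma~3.18$'$(2), the explicit reducibility points furnished by the Enright--Howe--Wallach interval, and Propositions~3.20 and~3.21.

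First I would handle necessity. By Lemma~3.18$'$(1), if $z\notin 1+\mathbb{Z}_{\geq 0}$ then $M_\mathfrak{p}^\mathfrak{g}(\lambda)$ is simple, so we may assume $z$ is a positive integer. By Theorem~2.6(1) applied with $A(\lambda_0)=9$, $M_\mathfrak{p}^\mathfrak{g}(\lambda)$ is simple for $z<9$. Together these force $z\in 9+\mathbb{Z}_{\geq 0}$ as a necessary condition for reducibility.

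For sufficiency I would partition $z\in 9+\mathbb{Z}_{\geq 0}$ into three regions and invoke the appropriate tool in each. For $z\in 17+\mathbb{Z}_{\geq 0}$, Lemma~3.18$'$(2) gives reducibility directly. For the three equally spaced points $z\in\{9,13,17\}$ of the form $A(\lambda_0)+iC(\lambda_0)$ with $0\leq i\leq 2$, Theorem~2.6(2) (i.e.\ the Enright--Howe--Wallach list) yields reducibility; in particular $z=9$ and $z=13$ are covered. The remaining integer values $z\in\{10,11,12,14,15,16\}$ are precisely those treated in Propositions~3.20 and~3.21. Combining these three inputs, every $z\in 9+\mathbb{Z}_{\geq 0}$ gives a reducible module. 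Finally I would translate back using $a=z-17$: the condition $z\in 9+\mathbb{Z}_{\geq 0}$ becomes $a\in -8+\mathbb{Z}_{\geq 0}$.

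The only non-routine step is bookkeeping: one must check that the union $\{9,13,17\}\cup\{10,11,12,14,15,16\}\cup(17+\mathbb{Z}_{\geq 0})$ really equals $9+\mathbb{Z}_{\geq 0}$, so that no integer in the critical window is missed. This is immediate once written out, because the Enright--Howe--Wallach points cover the endpoints $9$ and $17$ of the finite window $[A(\lambda_0),B(\lambda_0)]=[9,17]$ together with the interior point $13$, while the two propositions cover exactly the six remaining integers in that window; everything above $17$ is caught by Lemma~3.18$'$(2). With the accounting done, the proof is essentially a two-sentence invocation of the cited results, exactly parallel to the conclusion of Theorem~3.17.
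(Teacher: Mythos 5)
Your proposal is correct and follows essentially the same route as the paper's own proof of this theorem: the paper likewise deduces the necessary condition $z\in 9+\mathbb{Z}_{\geq0}$ from Lemma~3.18$'$(1) and Theorem~2.6(1), and obtains sufficiency from Lemma~3.18$'$(2), Theorem~2.6(2) (covering $z=9,13,17$ since $A(\lambda_0)=9$, $B(\lambda_0)=17$, $C(\lambda_0)=4$), and Propositions~3.20 and~3.21 (covering $z\in\{10,11,12,14,15,16\}$), before translating back via $a=z-17$. Your explicit bookkeeping of the integers in the window $[9,17]$ is just a more detailed spelling-out of what the paper states tersely.
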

\begin{proof}
Write $\lambda=\lambda_0+z\zeta$ in the special line, and then Lemma 3.18$'$(1) and Theorem 2.6(1) show that $M_\mathfrak{p}^\mathfrak{g}(\lambda)$ is reducible only if $z\in9+\mathbb{Z}_{\geq0}$. On the other hand, Lemma 3.18$'$(2), Proposition 3.20, Proposition 3.21, and Theorem 2.6(2) show that if $z\in9+\mathbb{Z}_{\geq0}$, then $M_\mathfrak{p}^\mathfrak{g}(\lambda)$ is reducible. It follows that $M_\mathfrak{p}^\mathfrak{g}(\lambda)$ is reducible if and only if $z\in9+\mathbb{Z}_{\geq0}$, which is equivalent to $a\in-8+\mathbb{Z}_{\geq0}$.
\end{proof}

\begin{center}
\textbf{Acknowledgements}
\end{center}
I felt interested in the reducibility of generalized Verma modules when discussing with Doctor Zhanqiang BAI on another research problem which inspired me with the ideas. In the process of the study, Doctor Toshihisa KUBO sent me his Ph.D. thesis which offered me some useful techniques. After I finished the paper, some reviewers gave me very concrete and thoughtful advice on revising the paper. I would like to express my thankfulness to all of them.

\end{document}